                \pgfextractx{\pgf@xa}{\pgfpointanchor{\tikztostart}{east}}
                \pgfextractx{\pgf@xb}{\pgfpointanchor{\tikztotarget}{west}}
                \pgfextracty{\pgf@ya}{\pgfpointanchor{\tikztostart}{center}}
                \pgfextracty{\pgf@yb}{\pgfpointanchor{\tikztotarget}{center}}
                \edef\tikzstartx{\the\pgf@xa}
                \edef\tikzendx{\the\pgf@xb}
                \edef\midy{\the\dimexpr0.5\dimexpr\pgf@ya\relax +0.5\dimexpr\pgf@yb\relax}
\newcommand{\cA}{\mathcal{A}}
\newcommand {\M}{\mathrm{M}}
\newcommand{\C}{\mathbb{C}}
\newcommand{\cC}{\mathcal{C}}
\newcommand{\F}{\mathbb{F}_2}
\newcommand{\bM}{\mathbb{M}_2}
\newcommand{\bN}{\mathbb{N}}
\newcommand{\cQ}{\mathcal{Q}}
\newcommand{\R}{\mathbb{R}}
\newcommand{\bS}{\mathbb{S}}
\newcommand{\cY}{\mathcal{Y}}
\newcommand{\Z}{\mathbb{Z}}
\newcommand{\rH}{\mathrm{H}}
\newcommand{\xrtarr}[1]{\stackrel{#1}{\longrightarrow}}
\newcommand{\rtarr}{\longrightarrow}
\newcommand{\iso}{\cong}
\newcommand{\sma}{\wedge}
\newcommand{\hsf}{\mathsf{h}}
\newcommand{\mm}{/\!\!/}
\newcommand{\Sp}{\mathbf{Sp}} 
\newcommand{\RP}{\mathbb{RP}}
\newcommand{\CP}{\mathbb{CP}}
\newcommand{\mr}[1]{\mathrm{#1}}
\newcommand{\fin}{\mr{fin}}
\newcommand{\Betti}{\mr{\upbeta}}
\newcommand{\Ctwo}{{\mr{C}_2}}
\newcommand{\nil}{\mr{nil}}
\newcommand{\Y}{\mathcal{Y}}
\newcommand{\QR}{\cQ_{\R}}
\newcommand{\QC}{\cQ_{\Ctwo}}
\newcommand{\coneR}[1]{\mr{C}^\R(#1)}
\newcommand{\coneC}[1]{\mr{C}^{\Ctwo}(#1)}
\renewcommand{\emph}[1]{\textit{#1}}
\newcommand{\Ytriv}{\Y_{\mr{triv} } }
\newcommand{\Yone}{\Y_{(2,1)} }
\newcommand{\Ytwo}{\Y_{(\hsf,0)} } 
\newcommand{\Ythree}{\Y_{(\hsf, 1)} } 
\DeclareMathOperator{\Ext}{Ext}
\newcommand{\bMR}{\bM^\R}
\newcommand{\cAR}{\cA^\R}
\newcommand{\cAC}{\cA^{\Ctwo}}
\newcommand{\Sq}{\mathrm{Sq}}
\newcommand{\ho}{\mr{Ho}}
\newif\ifCharts    
\numberwithin{equation}{section}
\numberwithin{figure}{section}
\def\makeautorefname#1#2{\expandafter\def\csname#1autorefname\endcsname{#2}}
\newtheorem{thm}{Theorem}[section]
\newtheorem{cor}{Corollary}[section]
\newtheorem{prop}{Proposition}[section]
\newtheorem{lem}{Lemma}[section]
\theoremstyle{definition}
\newtheorem{defn}{Definition}[section]
\newtheorem{eg}{Example}[section]
\newtheorem{notn}{Notation}[section]
\newtheorem{rmk}{Remark}[section]
\newenvironment{pf}{\begin{proof}}{\end{proof}}
\let\c@cor=\c@thm
\let\c@con=\c@thm
\let\c@prop=\c@thm
\let\c@lem=\c@thm
\let\c@defn=\c@thm
\let\c@eg=\c@thm
\let\c@notn=\c@thm
\let\c@rmk=\c@thm
\let\c@warn=\c@thm
\let\c@qst=\c@thm
\let\c@probl=\c@thm
\let\c@equation=\c@thm
\let\c@figure=\c@thm
\newcolumntype{L}{>{$}l<{$}}
\newenvironment{psmallmatrix}
  {\left(\begin{smallmatrix}}
  {\end{smallmatrix}\right)}
\newcounter{themyfigure}
\newcommand{\slfmp}{{--self-map}}
\title{An $\R$-motivic $v_{1}$%
--self-map 
of periodicity $1$}
\author{P. Bhattacharya}
\address{Department of Mathematics,
University of Notre Dame,
Notre Dame, IN  46556} 
\email{pbhattac@nd.edu}
\author{B. Guillou}
\address{Department of Mathematics, The University of Kentucky, Lexington, KY 40506--0027}
\email{bertguillou@uky.edu}
\author{A. Li}
\address{Department of Mathematics, The University of Kentucky, Lexington, KY 40506--0027}
\email{ang.li1414201@uky.edu}
\thanks{B. Guillou and A. Li were  supported by NSF grants DMS-1710379 and DMS-2003204.}
\begin{document}

\begin{abstract}
We consider a nontrivial action of $\mathrm{C}_2$ on the type $1$ spectrum $\mathcal{Y} := \mr{M}_2(1) \sma \mr{C}(\eta)$, which is well-known for admitting a $1$-periodic $v_1$--self-map. The resultant finite $\Ctwo$-equivariant spectrum $\mathcal{Y}^{\mathrm{C}_2}$ can also be viewed as the complex points of  a finite $\mathbb{R}$-motivic spectrum $\mathcal{Y}^\mathbb{R}$. In this paper, we show that one of the $1$-periodic $v_1$--self-maps of $\mathcal{Y}$ can be lifted to  a self-map of $\mathcal{Y}^{\mathrm{C}_2}$ as well as   $\mathcal{Y}^{\mathbb{R}}$. Further, the cofiber of the  self-map of $\mathcal{Y}^{\mathbb{R}}$ is a realization of the subalgebra $\mathcal{A}^\R(1)$ of the $\R$-motivic Steenrod algebra.  We also show that the $\mathrm{C}_2$-equivariant self-map is  nilpotent on the geometric fixed-points of $\mathcal{Y}^{\mathrm{C}_2}$.
\end{abstract}

\maketitle

\section{Introduction}

In  classical stable homotopy theory, the interest in periodic $v_n$\slfmp{s} of finite spectra lies in the fact that one can associate to each $v_n$\slfmp\ an infinite family in the chromatic layer $n$  stable homotopy groups of spheres. Therefore, interest lies in constructing type $n$ spectra and finding $v_n$\slfmp{s} of lowest possible periodicity on a given type $n$ spectrum. This, in general, is a difficult problem, though progress has been made  sporadically throughout the history of the subject \cites{T71,DM, BP, BHHM, Nave, BEM, BE20}. With the modern development of motivic stable homotopy theory, one may ask if 
there are
 similar  periodic self-maps of finite motivic spectra. 

 Classically any non-contractible finite $p$-local spectrum admits a periodic $v_n$\slfmp \   for some $n \geq 0$.  This is a consequence of the thick-subcategory theorem \cite[Theorem 7]{Thick}, aided by a vanishing line argument \cite[\S 4.2]{Thick}. 
 In the classical case all the thick tensor ideals of $\Sp_{p,\fin}$ (the homotopy category of finite $p$-local spectra) are also prime (in the sense of \cite{B}). The  thick tensor-ideals of the homotopy category of cellular motivic spectra over $\C$ or $\R$ are 
not completely known (but see \cite{HelOrm}). However, one can gather some knowledge about the prime thick tensor-ideals  in $\ho(\Sp^{\R}_{2,\fin})$ (the homotopy category of $2$-local cellular $\R$-motivic spectra) through the Betti realization functor 
 \[ 
\begin{tikzcd}
\Betti: \ho(\Sp^\R_{2,\fin}) \rar & \ho(\Sp^{\mr{C}_2}_{2,\fin})
\end{tikzcd}
\]
using the complete knowledge of prime thick subcategories of $\ho(\Sp^{\mr{C}_2}_{2,\fin})$ \cite{BaSa}.

The prime thick tensor-ideals of $\ho(\Sp^{\mr{C}_2}_{2,\fin})$  are essentially the pull-back of the classical thick subcategories  along the two functors, the geometric fix point functor  
 \[ 
\begin{tikzcd}
\Phi^{\mr{C}_2} : \ho(\Sp^{\mr{C}_2}_{2,\fin}) \rar & \ho(\Sp_{2,\fin})
\end{tikzcd}
\]
and the forgetful functor 
\[ 
\begin{tikzcd}
\Phi^{e} : \ho(\Sp^{\mr{C}_2}_{2,\fin}) \rar & \ho(\Sp_{2,\fin}).
\end{tikzcd}
\]
Let $\mathcal{C}_n$ denote the thick subcategory of $\ho(\Sp_{2,\fin})$ consisting of spectra of type at least $n$. The prime thick subcategories, 
\[ \mathcal{C}( e, n) =( \Phi^{e})^{-1}(\mathcal{C}_n) \text{ and } \mathcal{C}( \Ctwo, n) = (\Phi^{\mr{C}_2})^{-1}(\mathcal{C}_n),\] are the only prime thick subcategories of $\ho(\Sp^{\mr{C}_2}_{2,\fin})$. 

\begin{defn}
We say a spectrum $X \in \ho(\Sp^{\mr{C}_2}_{2,\fin})$ is of \emph{ type $(n,m)$ }iff $\Phi^e(X)$ is of type $n$ and  $\Phi^{\mr{C}_2}(X)$ is of type $m$. 
\end{defn}

For a type $(n,m)$ spectrum $X$, a self-map $f: X \to X$  is periodic if and only if  at least one of $\{ \Phi^e(f) ,\Phi^{\mr{C}_2}(f)\}$ are periodic (see \cite[Proposition 3.17]{BGH}). 

\begin{defn} Let $X \in \ho(\Sp^{\mr{C}_2}_{2,\fin})$ be of type $(n,m)$.  We say a self-map $f:X \to X $ is
\begin{enumerate}[(i)]
\item  a \emph{ $v_{(n,m)}$\slfmp} of mixed periodicity $(i,j)$ if $\Phi^{e}(f)$ is a  $v_n$\slfmp{} of periodicity $i$ and $\Phi^{\mr{C}_2}(f)$ is 
a $v_m$\slfmp\ of periodicity $j$, 
\item   a \emph{$v_{(n, \mr{nil})}$\slfmp} of periodicity $i$ if $\Phi^e(f)$ is a $v_n$\slfmp\ of periodicity $i$ and  $\Phi^{\mr{C}_2}(f)$ is nilpotent, and, 
\item  a \emph{$v_{( \mr{nil}, m)}$\slfmp} of periodicity $j$ if $\Phi^e(f)$ is a nilpotent self-map and   $\Phi^{\mr{C}_2}(f)$ is a $v_m$\slfmp\ of periodicity $j$. 
\end{enumerate}
\end{defn}

\begin{eg} \label{egCtwo1}
 The sphere spectrum $\bS_{\Ctwo}$ is of  type $(0,0)$. The 
degree 2 map
  is a $v_{(0,0)}$\slfmp. In general, if we consider the $v_n$\slfmp\ of a type $n$ spectrum with trivial action of $\Ctwo$, then the resultant equivariant self-map is a $v_{(n,n)}$\slfmp. 
\end{eg}

\begin{eg}  \label{egCtwo3}
Let $\mr{S}^{1,1}_{\Ctwo}$ denote the $\Ctwo$-equivariant sphere  which is the one-point compactification of the real sign representation.  The  unstable twist-map 
\[ 
\begin{tikzcd}
\epsilon_u: \mr{S}^{1,1}_{\Ctwo} \sma \mr{S}^{1,1}_{\Ctwo}  \rar & \mr{S}^{1,1}_{\Ctwo}  \sma \mr{S}^{1,1}_{\Ctwo} 
\end{tikzcd}
\]
stabilizes to a nonzero element  $\epsilon \in \pi_{0,0}(\bS_{\Ctwo})$. Let $\hsf =1- \epsilon \in \pi_{0,0}(\bS_{\Ctwo})$ be the stabilization of the map 
\[ 
\begin{tikzcd}
\hsf_u = 1 - \epsilon_u: \mr{S}^{3,2}_{\Ctwo}   \rar & \mr{S}^{3,2}_{\Ctwo}.
\end{tikzcd}
\]
Note that on the underlying space $\epsilon$ is of  degree $-1$, while on the fixed points it is the identity. Therefore $\Phi^e(\hsf)$ is multiplication by $2$, whereas $\Phi^{\Ctwo}(\hsf)$ is trivial. Thus $\hsf$ is a $v_{(0,\nil)}$\slfmp. Thus 
$\coneC{\hsf}$ is of type $(1,0)$.
\end{eg}

\begin{eg}  \label{egCtwo2}
The equivariant Hopf-map $\eta_{1,1} \in \pi_{1,1}(\bS_{\Ctwo})$
is the Betti realization of the $\R$-motivic Hopf-map $\eta$  \cites{Morel,MotivicHopf}.
Up to a unit, it is the stabilization of the projection map  
\[ 
\begin{tikzcd}
\eta_{1,1}^{u}:=  \pi: \mr{S}^{3,2}_{\Ctwo}  \simeq \C^2 \setminus\{  \mathbf{0} \}   \rar & \CP^1 \iso \mr{S}^{2,1}_{\Ctwo},
\end{tikzcd} \]
where the domain and the codomain are given the $\Ctwo$-structure using  complex conjugation. On  fixed-points, the map $\pi$ is the projection map 
\[ 
\begin{tikzcd}
\pi: \R^2 \setminus\{  \mathbf{0} \} \rar & \RP^1,
\end{tikzcd} \]
which is a degree $2$ map. From this we learn that while $\Phi^e(\eta_{1,1})$ is nilpotent, $\Phi^{\Ctwo}(\eta_{1,1})$ is the periodic $v_0$\slfmp. Hence, $\eta_{1,1}$ is a $v_{(\nil,0)}$\slfmp\ and the cofiber $\mr{C}(\eta_{1,1})$ is of type $(0,1)$. 
\end{eg}

\begin{rmk} In the $\Ctwo$-equivariant stable homotopy groups, the usual Hopf-map (sometimes referred to as the `topological Hopf-map') is different from $\eta_{1,1}$ of \autoref{egCtwo2}.  The `topological Hopf-map' $\eta_{1,0} \in \pi_{1,0}(\bS_{\Ctwo}) $ should be thought of as the stabilization of the unstable Hopf-map 
\[ 
\begin{tikzcd}
(\eta_{1,0})_u \colon \mr{S}^{3,0}_{\Ctwo} \rar & \mr{S}^{2,0}_{\Ctwo} 
\end{tikzcd}
\]
where both domain and codomain are given the trivial $\Ctwo$-action. 
\end{rmk}

\begin{defn} We say a spectrum $X \in \ho(\Sp_{2,\fin}^\R)$ is of type $(n,m)$ if $\Betti (X)$ is of type $(n,m)$. We call an $\R$-motivic self-map 
\[ f: X \to X\]
 a $v_{(n,m)}$\slfmp, where 
 $m$ and $n$ are in $\bN \cup \{ \nil \}$ (but not both $\nil$),
  if $\Betti(f)$ is a $\Ctwo$-equivariant  $v_{(n,m)}$\slfmp.
\end{defn}
\begin{rmk} The maps `multiplication by $2$' (of \autoref{egCtwo1}), $\hsf$ (of \autoref{egCtwo3}), and  $\eta_{1,1}$ (of \autoref{egCtwo2})   admit $\R$-motivic lifts along $\Betti$ and provide us with examples of a $v_{(0,0)}$\slfmp,  $v_{(0, \nil)}$\slfmp\  and $v_{(\nil, 0)}$\slfmp\  of the $\R$-motivic sphere spectrum $\bS_{\R}$, respectively. 
\end{rmk}

A theorem of Balmer and Sanders \cite{BaSa} asserts that $\cC( e, n) \subset  \cC( \Ctwo, m) $ if and only if $n \geq m+1$. 
In particular, $\cC(e,n)$ is contained in $\cC(\Ctwo, n-1)$. 
Consequently, there are no  type $(n, m)$ ($\Ctwo$-equivariant or $\R$-motivic) spectra if $n \geq m+2$. Their result also implies the following:

\begin{prop}
Let $X \in \ho(\Sp^{\mr{C}_2}_{2,\fin})$ be of type $(n+1,n)$ for some $n$. Then $X$ cannot support a $v_{(n+1,\nil)}$\slfmp.
\end{prop}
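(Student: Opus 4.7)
The plan is to argue by contradiction, using the cofiber of a suitable power of such a self-map.

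Assume that $f : X \to X$ is a $v_{(n+1, \nil)}$\slfmp, so that $\Phi^{e}(f)$ is a $v_{n+1}$\slfmp\ of $\Phi^e(X)$ (which is of type $n+1$) and $\Phi^{\Ctwo}(f)$ is nilpotent on $\Phi^{\Ctwo}(X)$ (which is of type $n$). Choose $k$ large enough so that $\Phi^{\Ctwo}(f^k) = 0$; note that $\Phi^e(f^k)$ is still a $v_{n+1}$\slfmp\ of $\Phi^e(X)$ (since a power of a $v_n$\slfmp\ is again a $v_n$\slfmp). Let $Y := \cofib(f^k)$.

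First I would analyze $\Phi^e(Y)$. Applying $\Phi^e$ to the cofiber sequence $X \xrightarrow{f^k} X \to Y$ gives $\Phi^e(Y) = \cofib(\Phi^e(f^k))$. Since $\Phi^e(f^k)$ is a $v_{n+1}$\slfmp\ of a type $n+1$ spectrum, its cofiber is of type $n+2$ (this is standard: the cofiber of a $v_n$\slfmp\ of a type $n$ spectrum is of type $n+1$).

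Next I would analyze $\Phi^{\Ctwo}(Y)$. Because $\Phi^{\Ctwo}(f^k) = 0$, the cofiber sequence splits to give
\[ \Phi^{\Ctwo}(Y) \simeq \Phi^{\Ctwo}(X) \vee \Sigma \Phi^{\Ctwo}(X), \]
which is still of type $n$ (the type of a finite wedge is the minimum of the types of the summands).

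Consequently, $Y$ would be a spectrum of type $(n+2, n)$. However, the Balmer--Sanders inclusion $\cC(e,n+2) \subset \cC(\Ctwo, n+1)$ asserts that any spectrum with $\Phi^e$ of type $\geq n+2$ must have $\Phi^{\Ctwo}$ of type $\geq n+1$. This contradicts $\Phi^{\Ctwo}(Y)$ being of type $n$, completing the proof. The only nontrivial ingredient is the invocation of the Balmer--Sanders classification, already quoted in the excerpt, so no real obstacle arises beyond organizing the cofiber sequence argument.
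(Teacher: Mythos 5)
Your proposal is correct and is essentially the argument the paper gives (which is just the one sentence following the proposition: the cofiber would have type $(n+2,n)$, contradicting Balmer--Sanders). The only refinement you add is passing to a power $f^k$ so that $\Phi^{\Ctwo}(f^k)=0$ and the geometric-fixed-point cofiber sequence splits; this cleanly avoids having to argue directly that the cofiber of a merely nilpotent self-map of a type-$n$ spectrum is still of type $n$, but the substance of the proof is the same.
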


 The proposition holds since the cofiber of such a self-map would be of type $(n+2, n)$, contradicting the results of Balmer-Sanders.
  In particular, neither $\coneC{\hsf}$ nor $\coneR{\hsf}$ supports a $v_{(1,\nil)}$\slfmp. 
 However, it is possible that $\coneC{\hsf}$  as well as $\coneR{\hsf}$ can  admit a  $v_{(1, 0)}$\slfmp\ or a $v_{(\nil,0)}$\slfmp. In fact, $\eta_{1,1} \in \pi_{1,1}(\bS_\R)$ and $\eta_{1,1} \in \pi_{1,1}(\bS_{\Ctwo})$ induce  $v_{(\nil,0)}$\slfmp{s} of $\coneR{\hsf}$ and $\coneC{\hsf}$ respectively. 
In \autoref{nonexistence}, we show that:

\begin{thm} \label{thm:nonexist1}
 The spectrum $\coneR{\hsf}$  does not admit a $v_{(1,0)}$\slfmp. 
 \end{thm}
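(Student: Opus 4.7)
The plan is to first constrain the bidegree of any hypothetical $v_{(1,0)}$-self-map of $\coneR{\hsf}$ to a narrow range, and then show that in this range no class in $[\coneR{\hsf},\coneR{\hsf}]$ has the required Betti realization.

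Suppose $f\colon\SI^{s,w}\coneR{\hsf}\to\coneR{\hsf}$ is a $v_{(1,0)}$-self-map. By \autoref{egCtwo3}, $\hsf$ is null on geometric fixed points, so $\Phi^{\Ctwo}\coneC{\hsf}\simeq\bS\vee\SI\bS$. Since a $v_0$-self-map of this classical type-$0$ spectrum must preserve rational homology degree-wise and hence have integer degree $0$, and since $\Phi^{\Ctwo}$ sends a bidegree-$(s,w)$ equivariant map to one of classical degree $s-w$, we must have $s=w$. Moreover $\Phi^e\Betti(f)$ is a classical $v_1$-self-map of $\M_2(1)$, and the Adams map (of degree $8$) generates all such, forcing $s=8k$ for some $k\geq 1$. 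Any candidate $f$ therefore lives in bidegree $(8k,8k)$.

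Next I would compute the group $[\coneR{\hsf},\coneR{\hsf}]_{(8k,8k)}$. Applying $[-,\coneR{\hsf}]_{(8k,8k)}$ to the defining cofiber sequence $\bS_\R\xrtar{\hsf}\bS_\R\to\coneR{\hsf}$ gives a four-term exact sequence
\[ \pi_{8k-1,8k}(\coneR{\hsf})\xrtar{\hsf}\pi_{8k-1,8k}(\coneR{\hsf})\to[\coneR{\hsf},\coneR{\hsf}]_{(8k,8k)}\to\pi_{8k,8k}(\coneR{\hsf})\xrtar{\hsf}\pi_{8k,8k}(\coneR{\hsf}), \]
and feeding the same cofiber sequence into the target reduces each $\coneR{\hsf}$-stem to $\hsf$-towers in $\pi_{*,*}(\bS_\R)$ near the diagonal $s=w$. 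In this regime Morel's theorem identifies the stable stems with Milnor--Witt $K$-theory $K^{MW}_*(\R)$, where $\hsf=1-\epsilon$ satisfies the useful relations $(1+\epsilon)\hsf=0$ and $\hsf\cdot\eta=2\eta$.

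The key and technically delicate step is to show that no class in $[\coneR{\hsf},\coneR{\hsf}]_{(8k,8k)}$ satisfies both halves of the $v_{(1,0)}$-condition. The classical Adams map $v_1^{4k}$ naturally lifts to $\R$-motivic bidegree $(8k,4k)$, not $(8k,8k)$; producing a lift in the heavier weight $w=8k$ requires multiplication by an $\R$-motivic class of bidegree $(0,4k)$ in $\pi_{*,*}(\bS_\R)$. I expect the main obstacle to be an $\R$-motivic Adams spectral sequence calculation, using the explicit $\cA^\R$-module structure of $\rH^{*,*}_\R(\coneR{\hsf};\ulF_2)$ as a free $\M_2^\R$-module on classes in bidegrees $(0,0)$ and $(1,0)$, to show that every candidate weight-shift class either has nilpotent Betti realization on $\Phi^{\Ctwo}$ (violating the $v_0$-condition) or vanishes after $\hsf$-multiplication (violating the $v_1$-condition on the bottom cell). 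This bigraded Adams analysis near the diagonal $s=w$ is where the real work lies.
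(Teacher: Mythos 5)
Your bidegree constraint is correct and matches the paper: the $v_0$-condition on $\Phi^{\Ctwo}$ forces $s=w$, and the Adams periodicity of $\M_2(1)$ forces $s=8k$. After that, however, you announce an Adams spectral sequence computation of $[\coneR{\hsf},\coneR{\hsf}]_{(8k,8k)}$ for all $k\geq 1$, and you explicitly say "the real work lies" in this "technically delicate" analysis without carrying it out. That unexecuted step is the proof, so as written there is a genuine gap.

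The paper avoids this entirely with a much more economical observation. If $f\colon\Sigma^{8k,8k}\coneR{\hsf}\to\coneR{\hsf}$ had $\Phi^e\Betti(f)$ a $v_1$\slfmp\ $v$ of $\M_2(1)$, then the classical composite from the bottom cell $\Sigma^{8k}\bS$ through $v$ to the top cell $\Sigma^1\bS$ of $\M_2(1)$ is known to be nonzero (it equals $P^{k-1}(8\sigma)$). By additivity of $\Phi^e\circ\Betti$, the corresponding $\R$-motivic composite
\[
\Sigma^{8k,8k}\bS_\R \longrightarrow \Sigma^{8k,8k}\coneR{\hsf} \xrightarrow{\ f\ } \coneR{\hsf} \longrightarrow \Sigma^{1,0}\bS_\R
\]
would then be nonzero as well. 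But this composite is a class in $\pi_{8k-1,8k}(\bS_\R)$, which has coweight $-1$, and $\pi_{*,*}(\bS_\R)$ vanishes in negative coweights (Dugger--Isaksen). Contradiction. Note that this uses only the $v_1$-condition on $\Phi^e$; the $v_0$-condition on $\Phi^{\Ctwo}$ is needed only to pin down $s=w$. Your proposal correctly identifies the relevant diagonal and even gestures at coweight via Milnor--Witt stems, but it does not isolate the single bottom-to-top-cell composite that makes the argument elementary — instead it reaches for a full calculation of the bigraded endomorphism group, which is unnecessary and which you do not provide.
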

 
However, it is possible  that $\coneC{\hsf}$ admits a $v_{(1,0)}$\slfmp\ (for details see  \autoref{rem:Ctwov10}). In contrast to the classical case, there is no guarantee that a finite $\Ctwo$-equivariant  or $\R$-motivic spectrum will admit \emph{any} periodic self-map, or at least nothing concrete is known yet. This question must be studied!

The goal of this paper is rather modest. We consider the classical spectrum \[ \Y:= \M_2(1) \sma \mr{C}(\eta) \] that admits, up to homotopy, 8 different $v_1$\slfmp{s} of periodicity $1$ \cite[Section~2]{DM} (see also \cite{BEM}). We ask ourselves if the $v_1$\slfmp{s} can preserve symmetries upon providing  $\Y$ with interesting $\Ctwo$-equivariant structures. We will consider four $\Ctwo$-equivariant lifts of the spectrum $\Y$, 
\begin{enumerate}[(i)]
\item $\Ytriv^{\Ctwo}$, where the action of $\Ctwo$ is trivial, 
\item $\Yone^{\Ctwo} := \coneC{2} \sma \coneC{\eta_{1,1}} $, with $\Phi^{\Ctwo}(\Yone^{\Ctwo}) = \M_2(1) \sma \M_2(1)$,
\item $\Ytwo^{\Ctwo} := \coneC{\hsf} \sma \coneC{\eta_{1,0}} $, with $\Phi^{\Ctwo}(\Ytwo^{\Ctwo}) = \Sigma \mr{C}(\eta) \vee  \mr{C}(\eta)$,  and, 
\item $\Ythree^{\Ctwo} := \coneC{\hsf} \sma \coneC{\eta_{1,1}} $, with $\Phi^{\Ctwo}(\Ythree^{\Ctwo}) = \Sigma\M_2(1) \vee \M_2(1)$.
\end{enumerate} 
The $\Ctwo$-spectra $\Ytriv^{\Ctwo}$, $\Yone^{\Ctwo}$ and $\Ythree^{\Ctwo}$ are of type $(1,1)$, and $\Ytwo^{\Ctwo}$ is of type $(1,0)$.
There are unique $\R$-motivic lifts of the classes  $2$, $\hsf$, $\eta_{1,0}$,  and $\eta_{1,1}$, and therefore we have unique $\R$-motivic lifts of  $\Ytriv^{\Ctwo}$, $\Yone^{\Ctwo}$, $\Ytwo^{\Ctwo}$, and $\Ythree^{\Ctwo}$ which we will simply denote by $\Ytriv^\R$, $\Yone^\R$, $\Ytwo^\R$, and $\Ythree^{ \R}$, respectively. In this paper we prove:

\begin{thm} \label{main1} The $\R$-motivic spectrum $\Ythree^\R$ admits a  $v_{1, \mr{nil}}$\slfmp 
\[
\begin{tikzcd}
 v:\Sigma^{2,1}\Ythree^\R \rar & \Ythree^\R
 \end{tikzcd}
 \]
 of periodicity $1$. 
\end{thm}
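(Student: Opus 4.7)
The plan is to construct $v$ via the $\R$-motivic Adams spectral sequence, then to verify the underlying and geometric fixed-point behaviour to confirm that $v$ is a $v_{(1,\nil)}$\slfmp\ of periodicity $1$.

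\textbf{Step 1 (Cohomology).} First I would identify the cohomology $\rH^{*,*}(\Ythree^\R;\ulF)$ as a module over the $\R$-motivic Steenrod algebra $\cAR$. Since $\hsf$ is detected by $\Sq^1$ (its underlying map being multiplication by $2$) and $\eta_{1,1}$ is detected by $\Sq^2$, the four-cell complex $\Ythree^\R = \coneR{\hsf}\sma\coneR{\eta_{1,1}}$ has cohomology generated by the bottom class, with $\Sq^1$ and $\Sq^2$ acting as in $\cAR(1)$. This should be a four-dimensional cyclic $\cAR(1)$-module, exactly half the size of $\cAR(1)$, consistent with the abstract's assertion that the cofibre of $v$ realises $\cAR(1)$.

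\textbf{Step 2 (Adams $E_2$-page).} Next I would set up the $\R$-motivic Adams spectral sequence converging to the $2$-complete endomorphisms $[\Ythree^\R,\Ythree^\R]^{*,*}$. A change-of-rings isomorphism reduces the $E_2$-page to a computation in $\Ext_{\cAR(1)}$. Classically, one of the eight $v_1$\slfmp{s} of $\Y$ is detected by $h_1\in\Ext^{1,3}_{\cA(1)}(\F,\F)$, and I would locate its $\R$-motivic avatar $\tilde v\in\Ext^{1,(3,1)}_{\cAR(1)}$, then verify via comparison with the $\Ctwo$-equivariant and classical Adams spectral sequences that $\Phi^{e}\circ\Betti$ sends $\tilde v$ to $h_1$.

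\textbf{Step 3 (Survival and comparison).} A bidegree analysis on the motivic $E_2$-page should rule out Adams differentials targeting $\tilde v$: the potential source groups at higher Adams filtration in the appropriate tri-degree ought to vanish. Hence $\tilde v$ lifts to a self-map $v\colon \Sigma^{2,1}\Ythree^\R \rtarr \Ythree^\R$, and since the composite $\Phi^e \circ \Betti$ sends $v$ to a classical $1$-periodic $v_1$\slfmp\ of $\Y$, the underlying self-map inherits periodicity $1$.

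\textbf{Step 4 (Nilpotence and main obstacle).} Using $\Phi^{\Ctwo}(\hsf) = 0$ and $\Phi^{\Ctwo}(\eta_{1,1}) = 2$, one computes
\[
\Phi^{\Ctwo}\bigl(\Betti(\Ythree^\R)\bigr) \simeq (\bS\vee\Sigma\bS)\sma\M_2(1) \simeq \M_2(1)\vee\Sigma\M_2(1).
\]
The induced self-map $\Phi^{\Ctwo}(\Betti(v))$ has topological degree $2$, but $\M_2(1)$ admits no periodic $v_1$\slfmp\ of degree $2$ (its minimal $v_1$-periodicity is $4$, realised in degree $8$), so $\Phi^{\Ctwo}(\Betti(v))$ must be nilpotent.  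The principal challenge is the $\Ext$ computation together with the differential analysis: the extra motivic weight grading introduces potential differentials and hidden extensions that have no classical counterpart, and ruling these out requires careful bookkeeping in the tri-degree $(s,t,w)$ of the $\R$-motivic Adams spectral sequence, together with a precise identification of which of the eight classical $v_1$\slfmp{s} survives the lifting.
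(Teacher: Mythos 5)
Your overall strategy — locate the class on the Adams $E_2$ page, rule out differentials, check behavior under $\Phi^e$ and $\Phi^{\Ctwo}$ — matches the paper's architecture, and your Step 4 nilpotence argument for $\Phi^{\Ctwo}(\Betti(v))$ is essentially identical to the paper's (periodicity $2 < 8$ forces nilpotence on $\M_2(1)$ by Hopkins--Smith). But Step 2 has a genuine gap. The change-of-rings isomorphism would require $\rH^{*,*}(\Ythree^\R)$ to be an extended module $\cAR \otimes_{\cAR(1)} M$, which it is not: $\rH^{*,*}(\Ythree^\R) \cong \cAR(1) \otimes_{\Lambda(Q_1)} \bMR$ is a finite cyclic $\cAR(1)$-module (you note this yourself), not something induced up from $\cAR(1)$ to $\cAR$. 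So the Adams $E_2$-page $\Ext_{\cAR}(\rH^{*,*}(\Ythree^\R), \rH^{*,*}(\Ythree^\R))$ does not reduce to $\Ext_{\cAR(1)}$, and the candidate class cannot simply be "located" in the smaller category and lifted.

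More substantively, you are missing the construction that actually produces the candidate class $\overline{v}$. The paper builds a spectrum $\cA_1^\R$ as a Smith-idempotent summand of $\QR^{\sma 3}$ (the $\R$-motivic question-mark complex smashed three times, cut down by an idempotent $e \in \Z_{(2)}[\Sigma_3]$). After a freeness criterion (Theorem~\ref{thm:freeR}, an Adams--Margolis-style argument), one sees $\rH^{*,*}(\cA_1^\R)$ is free over $\cAR(1)$ on one generator, and an explicit computation of its $\cAR$-module structure produces a short exact sequence of $\cAR$-modules
\[
0 \to \rH^{*,*}(\Sigma^{3,1}\Ythree^\R) \to \rH^{*,*}(\cA_1^\R) \to \rH^{*,*}(\Ythree^\R) \to 0.
\]
This extension \emph{is} the class $\overline{v} \in \Ext^{2,1,1}_{\cAR}$. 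Without some such explicit realization of $\cAR(1)$ as an $\cAR$-module, you have no candidate cycle in Adams filtration one, and the existence of such an extension over the full $\cAR$ (not just over $\cAR(1)$) is exactly the nontrivial content: a $\Sq^4$-action must be chosen consistently (Lemma~\ref{lem:AA_1}), and the same check fails for $\Yone^\R$, which is why $\Ythree^\R$ and not $\Yone^\R$ appears in the theorem. Finally, the vanishing of potential differential targets (Step 3) is not a routine bidegree count — the paper invokes explicit $\Ext_{\cAR}$ charts in coweights $0$, $1$, $2$ from Dugger--Isaksen and Belmont--Isaksen; a pure degree argument does not rule out all targets without this input.
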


By applying the Betti realization functor we get: 

\begin{cor} 
The $\Ctwo$-equivariant spectrum $\Ythree^{\Ctwo}$ admits a $1$-periodic  $v_{1, \mr{nil}}$\slfmp 
\[
\begin{tikzcd}
 \Betti(v):\Sigma^{2,1}\Ythree^{\Ctwo} \rar & \Ythree^{\Ctwo}.
 \end{tikzcd}
 \]
\end{cor}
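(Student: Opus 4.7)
The proof is a direct application of the Betti realization functor to \autoref{main1}. The plan is as follows. First I would observe that, by the construction in the excerpt, the $\Ctwo$-equivariant spectrum $\Ythree^{\Ctwo}$ is (by definition) the Betti realization of $\Ythree^{\R}$, and that $\Betti$ sends the $\R$-motivic bigraded suspension $\Sigma^{2,1}$ to the $\Ctwo$-equivariant bigraded suspension $\Sigma^{2,1}$. Therefore $\Betti(v)$ is a genuine $\Ctwo$-equivariant self-map
\[
\Betti(v) \colon \Sigma^{2,1}\Ythree^{\Ctwo} \rtarr \Ythree^{\Ctwo}.
\]

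Next, the assertion that $\Betti(v)$ is a $v_{(1,\nil)}$\slfmp\ is essentially tautological: the definition of an $\R$-motivic $v_{(1,\nil)}$\slfmp\ given in the excerpt is precisely that its image under $\Betti$ is a $\Ctwo$-equivariant $v_{(1,\nil)}$\slfmp. So the only remaining point is the periodicity claim, namely that $\Phi^{e}(\Betti(v))$ is a classical $v_1$\slfmp\ of periodicity exactly $1$. This follows since the composite $\Phi^{e}\circ\Betti$ agrees with the functor that forgets the $\R$-motivic structure to obtain the underlying classical spectrum; applied to $\Ythree^{\R}$ this yields the classical spectrum $\Y = \M_2(1)\sma \mr{C}(\eta)$, and the statement of \autoref{main1} records that the periodicity of $\Phi^e(v)$ is $1$.

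There is no substantive obstacle here; the corollary is a formal consequence of \autoref{main1} together with the compatibility of $\Betti$ with bigraded suspensions and smash products, and the definitional setup of $\Ctwo$-equivariant versus $\R$-motivic $v_{(n,m)}$\slfmp{s}.
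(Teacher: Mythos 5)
Your argument is correct and is exactly the paper's: the corollary is stated in the paper immediately after \autoref{main1} with the one-line justification ``by applying the Betti realization functor,'' and your proposal simply unwinds why that one line suffices (the definitional relationship between the $\R$-motivic and $\Ctwo$-equivariant notions of $v_{(n,m)}$-self-maps, together with $\Betti$ preserving suspensions and sending $\Ythree^\R$ to $\Ythree^{\Ctwo}$). There is no gap and no divergence in approach.
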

 \begin{cor} 
 The geometric fixed-point spectrum of the telescope   \[ \Betti(v)^{-1}\Ythree^{\Ctwo} \] is contractible. 
 \end{cor}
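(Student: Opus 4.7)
The plan is to exploit two facts: first, by the main theorem the self-map $v$ (hence $\Betti(v)$) is a $v_{(1,\mr{nil})}$--self-map, which by definition means that $\Phi^{\Ctwo}(\Betti(v))$ is nilpotent; second, geometric fixed points commutes with sequential colimits.

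The argument will proceed as follows. I would begin by writing the telescope as the sequential (homotopy) colimit
\[ \Betti(v)^{-1}\Ythree^{\Ctwo} \simeq \mathrm{hocolim}\Bigl( \Ythree^{\Ctwo} \xrtar{\Betti(v)} \Sigma^{-2,-1}\Ythree^{\Ctwo} \xrtar{\Sigma^{-2,-1}\Betti(v)} \Sigma^{-4,-2}\Ythree^{\Ctwo} \to \cdots \Bigr). \]
Applying the geometric fixed-point functor $\Phi^{\Ctwo}$, which is symmetric monoidal and preserves filtered homotopy colimits, yields
\[ \Phi^{\Ctwo}\bigl(\Betti(v)^{-1}\Ythree^{\Ctwo}\bigr) \simeq \mathrm{hocolim}\Bigl( \Phi^{\Ctwo}(\Ythree^{\Ctwo}) \xrtar{\Phi^{\Ctwo}(\Betti(v))} \Sigma^{-2} \Phi^{\Ctwo}(\Ythree^{\Ctwo}) \to \cdots \Bigr), \]
where I have used that $\Phi^{\Ctwo}$ sends $\mathrm{S}^{2,1}_{\Ctwo}$ to $\mathrm{S}^{2}$.

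Since $\Phi^{\Ctwo}(\Betti(v))$ is a nilpotent self-map of $\Phi^{\Ctwo}(\Ythree^{\Ctwo}) \simeq \Sigma \M_2(1) \vee \M_2(1)$, there exists an integer $N$ such that $\Phi^{\Ctwo}(\Betti(v))^N \simeq 0$. Hence every sufficiently-composed transition map in the above sequential diagram is null, and the homotopy colimit is contractible. This gives the result.

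There is essentially no obstacle to overcome here, because \autoref{main1} has already established the key input, namely that $\Phi^{\Ctwo}(\Betti(v))$ is nilpotent. The corollary is then a formal consequence of the compatibility of geometric fixed points with sequential colimits together with the standard fact that telescopes of nilpotent self-maps vanish.
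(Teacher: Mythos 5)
Your argument is correct and is precisely the standard one the paper leaves implicit: the telescope is the sequential homotopy colimit of copies of $\Ythree^{\Ctwo}$ along $\Betti(v)$, the functor $\Phi^{\Ctwo}$ preserves sequential homotopy colimits, and \autoref{main1} guarantees that $\Phi^{\Ctwo}(\Betti(v))$ is nilpotent, so some iterate is null and the colimit vanishes. The paper offers no proof of its own, treating the statement as an immediate consequence of \autoref{main1}, so you have supplied exactly the expected reasoning. One small slip: since $\Phi^{\Ctwo}$ sends $\mr{S}^{a,b}_{\Ctwo}$ to $\mr{S}^{a-b}$ (e.g.\ $\Phi^{\Ctwo}(\mr{S}^{3,2}_{\Ctwo}) \simeq \mr{S}^1$), one has $\Phi^{\Ctwo}(\mr{S}^{2,1}_{\Ctwo}) \simeq \mr{S}^1$ rather than $\mr{S}^2$; this only shifts the suspensions appearing in the colimit diagram and has no bearing on the nilpotence argument or the conclusion.
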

 
Classically, the cofiber of a $v_1$\slfmp\ on $\Y$ is a realization of the finite subalgebra $\cA(1)$ of the Steenrod algebra $\cA$. We see a very similar phenomenon in the $\R$-motivic as well as in the $\Ctwo$-equivariant cases. The $\Ctwo$-equivariant Steenrod algebra $\cAC$ as well as the $\R$-motivic Steenrod algebra $\cAR$ admit subalgebras analogous to $\cA(1)$ (generated by $\Sq^1$ and $\Sq^2$) \cites{hill,ricka}, which we denote by $\cA^{\Ctwo}(1)$ and $\cA^{\R}(1)$, respectively. We observe that: 
 
 \begin{thm}\label{thm:cofibv1} 
 The spectrum $ \coneR{v}:= \mr{Cof} (v:\Sigma^{2,1}\Ythree^{\R} \to \Ythree^{\R})$  is a type $(2,1)$ complex whose bigraded cohomology is a free $\cAR(1)$-module on one generator.  
 \end{thm}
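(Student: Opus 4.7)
The plan is to compute $\mr{H}^{*,*}(\coneR{v};\ulF)$ via the cofiber long exact sequence and recognize it as $\cAR(1)$ by a combination of rank counting and an explicit extension calculation, while the type $(2,1)$ claim falls out of \autoref{main1}.

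First, I would compute $\mr{H}^{*,*}(\Ythree^\R;\ulF)$ as an $\cAR$-module, using the iterated smash description $\Ythree^\R = \coneR{\hsf} \sma \coneR{\eta_{1,1}}$. Two cofiber long exact sequences together with a K\"unneth isomorphism over $\bMR$ present it as a free $\bMR$-module with generators $1,a,b,ab$ in bidegrees $(0,0),(1,0),(2,1),(3,1)$; the Cartan formula then gives the expected ``lightning flash'' structure $\Sq^1(1)=a$, $\Sq^2(1)=b$, $\Sq^2(a)=\Sq^1(b)=ab$.

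Next, since \autoref{main1} constructs $v$ via a permanent cycle of positive Adams filtration in the $\R$-motivic Adams spectral sequence, the induced map $v^*$ vanishes on mod-$2$ cohomology, and the cofiber long exact sequence collapses to a short exact sequence of $\cAR$-modules
\[
0 \to \Sigma^{3,1}\mr{H}^{*,*}(\Ythree^\R;\ulF) \to \mr{H}^{*,*}(\coneR{v};\ulF) \to \mr{H}^{*,*}(\Ythree^\R;\ulF) \to 0.
\]
In particular, the middle term is free of $\bMR$-rank $8$, matching the rank of $\cAR(1)$. I would then consider the $\cAR(1)$-module map $\cAR(1) \to \mr{H}^{*,*}(\coneR{v};\ulF)$ sending $1 \mapsto 1$; by the rank equality it suffices to check surjectivity, for which the only nontrivial content is producing the bottom generator $1'$ of the shifted summand in bidegree $(3,1)$. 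In the bottom piece both $\Sq^2\Sq^1(1)$ and $\Sq^3(1)$ evaluate to $ab$, so $(\Sq^2\Sq^1+\Sq^3)(1)$ must lie in the shifted summand; showing this class is nonzero, and hence equals $1'$, is forced by the Adams-filtration class representing $v$ (equivalently, by the fact that the secondary operation associated with the Adem relation $\Sq^1\Sq^2=\Sq^3$ detects $v$).

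Finally, type $(2,1)$ follows from \autoref{main1}: $\Phi^e\circ\Betti(\coneR{v})$ is the cofiber of a classical $1$-periodic $v_1$\slfmp\ on the type $1$ spectrum $\Y$, hence type $2$; while $\Phi^{\Ctwo}\circ\Betti(\coneR{v})$ is the cofiber of a nilpotent self-map on the type $1$ spectrum $\Sigma\M_2(1)\vee\M_2(1)$, which remains type $1$ because a nilpotent endomorphism of a nonzero finitely generated module over the graded field $K(1)_*$ has both nonzero kernel and nonzero cokernel. The main obstacle is the extension calculation in the third paragraph; every other step is essentially formal once the $\cAR$-module structure of $\mr{H}^{*,*}(\Ythree^\R;\ulF)$ is in hand.
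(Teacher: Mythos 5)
Your route is genuinely different from the paper's. The paper does not recompute the $\cAR(1)$-structure on $\mr{H}^{*,*}(\coneR{v})$ from scratch: it observes that since $v$ is detected by $\overline{v}$ of \eqref{v:ASS} --- the very class \emph{defined} as the extension class of the short exact sequence \eqref{vexact} with middle term $\mr{H}^{*,*}(\cA_1^\R)$ --- the cohomology of the cofiber realizes the same extension, so $\mr{H}^{*,*}(\coneR{v}) \cong \mr{H}^{*,*}(\cA_1^\R)$ as $\cAR$-modules, and then \autoref{thm:A1R} (already proved via Margolis homology and the freeness criterion) finishes the freeness claim in one line. You instead try to verify freeness directly on $\mr{H}^{*,*}(\coneR{v})$, which is reasonable in spirit but re-derives work that \autoref{thm:A1R} already did.

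The gap in your argument is exactly where you flag it: ``showing this class is nonzero ... is forced by the Adams-filtration class representing $v$.'' This is not quite automatic. Knowing $v$ has Adams filtration exactly $1$ tells you the $\cAR$-extension is nonsplit; it does not by itself tell you that the restricted $\cAR(1)$-extension is nonsplit (a priori the nonsplitness could come entirely from $\Sq^4$ and higher, with $Q_1$ still acting trivially on the bottom generator). In fact $Q_1(1)\ne 0$ is true here --- from \autoref{lem:AA_1} and \autoref{fig:motivicA1} one reads off $Q_1(v_{0,0}) = v_{3,1}+w_{3,1}$, which is precisely the kernel generator in degree $(3,1)$ --- but this is a computed fact about the particular extension $\overline{v}$, not a formal consequence of filtration $1$. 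To close the gap along your lines you would either need to identify the extension class $\overline{v}$ explicitly (essentially the paper's route through \eqref{vexact}), or verify that the relevant $\Ext^1$ group is one-dimensional so that nonsplitness pins down the module uniquely, or invoke a genuine secondary-operation argument rather than asserting it parenthetically. Your treatment of the type $(2,1)$ claim, by contrast, is correct and matches the paper's reasoning, just spelled out in more detail.
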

 \begin{cor} The bigraded cohomology of the $\Ctwo$-equivariant spectrum $$\coneC{\Betti(v)}\cong \Betti(\coneR{v})$$ is a free $\cAC(1)$-module on one generator.
 \end{cor}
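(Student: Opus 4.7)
This corollary follows from \autoref{thm:cofibv1} by applying the Betti realization functor $\Betti$. Since $\Betti$ is symmetric monoidal and exact between stable homotopy categories, it carries cofiber sequences to cofiber sequences, giving
\[
\Betti(\coneR{v}) \simeq \mr{Cof}(\Betti(v)) = \coneC{\Betti(v)},
\]
which establishes the asserted equivalence.

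For the cohomological statement, the key input is that Betti realization carries the $\R$-motivic Eilenberg--MacLane spectrum $\eqHF$ to its $\Ctwo$-equivariant counterpart. Consequently there is an induced morphism of (bigraded) Steenrod algebras $\cAR \to \cAC$ that sends $\Sq^i \mapsto \Sq^i$, hence restricts to a morphism $\cAR(1) \to \cAC(1)$ of the subalgebras generated by $\Sq^1$ and $\Sq^2$. For any finite cellular $\R$-motivic spectrum $X$, Betti realization induces a natural map from the $\R$-motivic bigraded cohomology of $X$ to the $\Ctwo$-equivariant bigraded cohomology of $\Betti(X)$ that is linear over this morphism of Steenrod algebras.

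Applying this to $X = \coneR{v}$: by \autoref{thm:cofibv1} the $\R$-motivic cohomology is free of rank one over $\cAR(1)$, so the image of its generator under Betti realization sits inside a free rank-one $\cAC(1)$-submodule of the bigraded $\Ctwo$-cohomology of $\Betti(\coneR{v})$. To see that this submodule is the entire cohomology, I would use the explicit finite cell structure on $\coneR{v}$ extracted from the proof of \autoref{thm:cofibv1} (determined by $\Ythree^\R$ together with the attaching data of $v$): inducting up the cell filtration, the Betti realization of each cell contributes a free rank-one module over the $\Ctwo$-equivariant cohomology of a point, and comparing these contributions cell-by-cell with the rank-eight free $\cAC(1)$-module matches them precisely in each bidegree. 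The main subtlety is that the $\Ctwo$-equivariant cohomology of a point contains a ``negative cone'' absent from its $\R$-motivic analogue, so one cannot simply base change along $\mr{H}_\R^{\star,\star}(\mr{pt}) \to \mr{H}_{\Ctwo}^{\star,\star}(\mr{pt})$; however, the bigradings of the cells of $\coneR{v}$ confine all cohomology classes to the ``positive cone'' range, where the base change is faithful, so no spurious negative-cone elements appear.
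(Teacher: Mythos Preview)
The paper states this corollary without proof, regarding it as immediate from \autoref{thm:cofibv1}. Your first two paragraphs correctly unpack what that means. The real work is in your final paragraph, and there the argument is imprecise in a way that matters.

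You claim the cell bidegrees ``confine all cohomology classes to the `positive cone' range \ldots\ so no spurious negative-cone elements appear.'' Taken literally this is false: each cell of $\Betti(\coneR{v})$ contributes a full shifted copy of $\bMC$, negative cone included. What you actually need is that the $\Ctwo$-equivariant cohomology is \emph{free} over $\bMC$ on the same eight generators---negative-cone elements are present, but only as $\bMC$-multiples of those generators. Your cell-filtration induction could establish this, but the inductive step requires checking that the Betti-realized attaching maps still induce zero on $\Ctwo$-cohomology; this is not automatic from the $\R$-motivic vanishing (precisely because $\bMC \supsetneq \bMR$), and you only gesture at the bidegree analysis that would justify it.

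A cleaner route sidesteps the cell filtration entirely. Since $\mr{H}^{*,*}_\R(\coneR{v})$ is $\bMR$-free on generators $e_i$, there is an equivalence of $\mr{H}\F^\R$-modules $\mr{H}\F^\R \sma \coneR{v} \simeq \bigvee_i \Sigma^{|e_i|} \mr{H}\F^\R$. Applying $\Betti$, which is symmetric monoidal with $\Betti(\mr{H}\F^\R) \simeq \eqHF$, yields $\eqHF \sma \Betti(\coneR{v}) \simeq \bigvee_i \Sigma^{|e_i|} \eqHF$, so the $\Ctwo$-cohomology is $\bMC$-free on the $\Betti(e_i)$. The $\Sq^1,\Sq^2$-action on these generators is inherited via $\Betti$, and since (as the paper notes) $\cAR(1)$ and $\cAC(1)$ differ only in their coefficient rings, rank-one freeness transfers.
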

 
 Our last main result in this paper is the following. 
\begin{thm} \label{thm:nonexist2}
 The spectrum $\Ytwo^{\R}$  does not admit a $v_{(1,0)}$\slfmp. 
 \end{thm}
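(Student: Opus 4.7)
The plan is to argue by contradiction and reduce to Theorem \ref{thm:nonexist1}. Suppose $f \colon \Sigma^{a,b}\Ytwo^\R \to \Ytwo^\R$ is a $v_{(1,0)}$-self-map of mixed periodicity $(i,j)$. Since $\Phi^e(f)$ is a $v_1^i$-self-map of $\Y$ (of underlying degree $2i$), $\Phi^{\Ctwo}(f)$ is multiplication by $2^j$ on the type-$0$ spectrum $\Phi^{\Ctwo}(\Ytwo^\R) \simeq \mr{C}(\eta)\vee\Sigma\mr{C}(\eta)$ (of underlying degree $0$), and $\Phi^{\Ctwo}(\Sigma^{a,b}X)\simeq\Sigma^{a-b}\Phi^{\Ctwo}X$, the bidegree of $f$ is forced to be $a = b = 2i$.

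Next, I exploit the cofiber sequence
\[
\coneR{\hsf} \xrightarrow{\iota} \Ytwo^\R \xrightarrow{\pi} \Sigma^{2,0}\coneR{\hsf}
\]
obtained by smashing the defining cofiber sequence of $\coneR{\eta_{1,0}}$ with $\coneR{\hsf}$. The obstruction to $f$ factoring through $\iota$ is the class
\[
o \;=\; \pi \circ f \circ \Sigma^{2i,2i}\iota \;\in\; [\coneR{\hsf},\Sigma^{2-2i,-2i}\coneR{\hsf}].
\]
The geometric-fixed-point component $\Phi^{\Ctwo}(o)$ is a map $\bS \vee \Sigma\bS \to \Sigma^2\bS \vee \Sigma^3\bS$ whose entries lie in negative stable stems of $\bS$, so $\Phi^{\Ctwo}(o) = 0$ for all $i \geq 1$ on purely dimensional grounds. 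On the underlying spectrum, one computes that $\Phi^e(o) = \pi \circ v_1^i \circ \Sigma^{2i}\iota$ is a classical equivalence of $\M_2(1)$ when $i=1$, while for $i \geq 2$ it lies in the hom group $\pi_{2-2i}(\End \M_2(1))$, which vanishes because $\End \M_2(1)$ has connectivity $-1$.

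For $i \geq 2$, I upgrade this to the statement that the full $\R$-motivic hom group $[\coneR{\hsf},\Sigma^{2-2i,-2i}\coneR{\hsf}]$ vanishes in the relevant bidegree, using cellular connectivity of $\coneR{\hsf} \wedge D\coneR{\hsf}$ together with the known behavior of $\rho$-torsion classes. This forces $o = 0$, so $f$ descends through $\iota$ to a self-map $\bar f \colon \Sigma^{2i,2i}\coneR{\hsf} \to \coneR{\hsf}$; naturality of $\Phi^e$ and $\Phi^{\Ctwo}$ then identifies $\bar f$ as a $v_{(1,0)}$-self-map of $\coneR{\hsf}$, contradicting Theorem \ref{thm:nonexist1}.

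The main obstacle is the case $i = 1$, where $\Phi^e(o)$ is classically nontrivial and the descent argument breaks down. Here I instead argue directly that no class in $[\coneR{\hsf},\Sigma^{0,-2}\coneR{\hsf}]$ can have $\Phi^e$ equal to an equivalence of $\M_2(1)$ while simultaneously having $\Phi^{\Ctwo} = 0$. I expect this to follow from an $\R$-motivic Adams spectral sequence calculation of this hom group, building on and extending the analysis used to establish Theorem \ref{thm:nonexist1} in \autoref{nonexistence}; the conclusion is that the $\tau$- and $\rho$-periodicity structure on the $E_\infty$-page of the Adams spectral sequence for $\coneR{\hsf} \wedge D\coneR{\hsf}$ forbids the bidegree profile demanded of $o$.
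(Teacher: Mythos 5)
Your strategy---descend through $\iota\colon\coneR{\hsf}\to\Ytwo^\R$ and then invoke \autoref{thm:nonexist1}---is genuinely different from the paper's, but as written it has gaps. The paper's argument is much more direct: after observing (as you do) that $f$ would have bidegree $(2k,2k)$, it filters $\Ytwo^\R$ by its cells, all of which lie in motivic weight $0$, and uses the classical fact that $v_1^k$ on $\Y$ cannot factor through the bottom cell (else $\bS$ would be of type $\geq 1$) to force at least one composite $\Sigma^{2k,2k}\bS_\R\to\Ytwo^\R\to\Sigma^{m,0}\bS_\R$ ($m\in\{1,2,3\}$) to be nonzero; but each such composite lies in $\pi_{2k-m,2k}(\bS_\R)$, of negative coweight $-m$, and $\pi_{*,*}(\bS_\R)$ vanishes in negative coweight by \cite{LowMilnorWitt}. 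That finishes the proof without any descent.

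The concrete gaps in your proposal are the following. \emph{(1)} The case $i=1$ is left as an unargued sketch, and it is not a marginal case. \emph{(2)} Your assertion that $\Phi^e(o)=\pi^u\circ v_1\circ\Sigma^2\iota^u$ is a \emph{classical equivalence} of $\M_2(1)$ when $i=1$ appears to be false: any $v_1$\slfmp\ of $\Y$ has positive Adams filtration, so this composite cannot have filtration zero, as an equivalence must. In fact, $[\coneR{\hsf},\Sigma^{2-2i,-2i}\coneR{\hsf}]\cong\pi_{2i-2,2i}(D\coneR{\hsf}\sma\coneR{\hsf})$ vanishes for \emph{every} $i\geq 1$: the cells of $D\coneR{\hsf}\sma\coneR{\hsf}$ sit in coweights $-1,0,0,1$, so the contributing homotopy of $\bS_\R$ lies in coweights $-1,-2,-2,-3$, all negative. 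So $o=0$ even for $i=1$, and the ``main obstacle'' you flag does not arise; but noticing this requires precisely the coweight-vanishing input you left vague (``cellular connectivity \dots $\rho$-torsion classes''). \emph{(3)} Even once $\bar f$ exists, it is not automatically a $v_{(1,0)}$\slfmp: the square $\iota^u\circ\Phi^e(\bar f)=v_1^i\circ\Sigma^{2i}\iota^u$ only makes $\Phi^e(\bar f)$ a $K(1)_*$-isomorphism of $\M_2(1)$ of degree $2i$, which for $i\leq 3$ already contradicts the minimal periodicity $4$ of $\M_2(1)$ (a different contradiction than the one you claim), while $\Phi^{\Ctwo}(\bar f)$ is only forced to be $2^j$ modulo a nilpotent term and would need to be iterated before \autoref{thm:nonexist1} applies. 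Once patched, your argument can be made to work, but it is more circuitous and ultimately rests on the same coweight-vanishing fact the paper uses directly.
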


 The above results  immediately raise some obvious questions. Pertaining to self-maps one may ask: 
 Does $\Yone^\R$ admit a $v_{1, \nil}$\slfmp?
  Does $\Yone^\R$ or $\Ythree^\R$  admit a $v_{(1,1)}$\slfmp?
   Does $\Ytriv^\R$, $\Yone^\R$ or $\Ythree^\R$ admit $v_{(\nil,1)}$\slfmp? 
 Or more generally, how many different homotopy types of each kind of periodic self-maps exist? Related to $\cA^\R(1)$, one may inquire: How many different $\cA^\R$-module structures can be given to $\cA^\R(1)$? 
  Can those $\cA^\R$-modules be realized as a spectrum? Are the realizations of $\cA^\R(1)$  equivalent to cofibers of periodic self-maps of $\Y^\R_{(i,j)}$?  We hope to address most, if not all, of the above questions in our upcoming work (see  \autoref{rem:A128}, \autoref{rem:Y1v1} and \autoref{rem:Ctwov10}). 
 
 \subsection{Outline of our method}
\label{outline}
 We first construct a spectrum $\cA_1^\R$ which realizes the algebra $\cA^{\R}(1)$ using a method of Smith (outlined in \cite[Appendix~C]{Rav}) which constructs new finite spectra (potentially with larger number of cells) from known ones. The idea is as follows. If $X$ is a $p$-local finite spectrum then the permutation group $\Sigma_n$ acts on $X^{\sma n}$. One may then use an idempotent $e \in \Z_{(p)}[\Sigma_n]$ to obtain a split summand of the spectrum $X^{\sma n}$. As explained in \cite[Appendix~C]{Rav}, Young tableaux provide a rich source of such idempotents. For a judicious choice of $e$ and $X$, the spectrum $e(X^{\sma n})$ can be interesting. 
 
We exploit the relation that $\hsf \cdot \eta_{1,1} = 0$ 
in $\pi_{\ast, \ast}(\bS_\R)$ \cite{Morel} to construct an $\R$-motivic analogue of  the question mark complex. The cell-diagram of the question mark complex is as described in the picture below. 
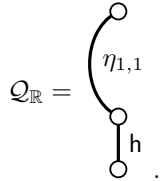
\begin{figure}[h]
\[
\QR = \!\! \raisebox{-3em}{
\begin{tikzpicture}\begin{scope}[very thick, every node/.style={sloped,allow upside down}, scale=0.7]

\draw (0,0)  node[inner sep=0] (v0) {} -- (0,1) node[inner sep=0] (v1) {} ;
\draw (0,3)  node[inner sep=0] (v3) {};
 \draw(v1) to [out=150,in=-150] (v3);
\filldraw (v0) circle (4pt);
\filldraw (v1) circle (4pt);
\filldraw (v3) circle (4pt);
\filldraw[white] (v0) circle (3pt);
\filldraw[white] (v1) circle (3pt);
\filldraw[white] (v3) circle (3pt);
\draw (0,.5) node[right]{$\hsf$} (-.5,2) node[right]{$\eta_{1,1}$};
\end{scope}\end{tikzpicture}. }
\]
\caption{ Cell-diagram of the $\R$-motivic question mark complex}
\end{figure}
For a choice of idempotent element $e$ in the group ring  $\Z_{(2)}[\Sigma_3]$,
we observe that $e(\rH^{*,*}(\QR)^{\otimes 3})$ is a free $\cA^\R(1)$-module.
This is the cohomology of an $\R$-motivic spectrum $\tilde{e}(\QR^{\sma 3})$, which we call $\Sigma^{1,0}\cA_1^\R$ (see \eqref{defnA1} for details). The observation requires us to develop a criterion that will detect freeness for modules over certain subalgebras of $\cA^\R$.   Writing $\bMR$ for the $\R$-motivic cohomology of a point, 
we prove: 

 \begin{thm} \label{thm:freeR} 
 A finitely generated
$\cA^\R(1)$-module  $\M$ is free if and only if  
\begin{enumerate}
\item $\M$ is free as an $\bMR$-module, and
\item $ \F \otimes_{\bMR} \M$ is a free $\F \otimes_{\bMR} \cA^{\R}(1)$-module.
\end{enumerate}
\end{thm}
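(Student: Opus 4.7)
The plan is to prove the forward direction by inspection and the converse by a determinant argument in the bigraded ring $\bMR \cong \F[\tau,\rho]$. The forward direction is straightforward: $\cA^\R(1)$ is itself a free $\bMR$-module of rank $8$ (cf.~\cites{hill,ricka}), so any free $\cA^\R(1)$-module is $\bMR$-free, establishing (1); and the functor $\F \otimes_{\bMR} (-)$ commutes with direct sums, establishing (2).

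For the converse, choose a finite $\F \otimes_{\bMR} \cA^\R(1)$-basis $\{\bar x_i\}_{i=1}^{k}$ of $\F \otimes_{\bMR} \M$ and lift each $\bar x_i$ to an element $x_i \in \M$ of the same bidegree. These lifts assemble into a bigraded $\cA^\R(1)$-module map
\[
\phi \colon F := \bigoplus_{i=1}^{k} \Sigma^{|x_i|} \cA^\R(1) \longrightarrow \M
\]
that becomes an isomorphism after applying $\F \otimes_{\bMR} -$. Both $F$ and $\M$ are free as $\bMR$-modules (the former because $\cA^\R(1)$ is, the latter by hypothesis), and comparing bigraded Hilbert series using that $\F \otimes_{\bMR} \cA^\R(1)$ has $\F$-rank $8$ forces $F$ and $\M$ to have the same bigraded $\bMR$-Hilbert series.

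The main step is to upgrade the isomorphism modulo the augmentation ideal $(\tau,\rho) \subset \bMR$ to an isomorphism over $\bMR$. Choose bigraded $\bMR$-bases for $F$ and $\M$ in matching bidegrees (possible since their Hilbert series agree), write $\phi$ as a square matrix $A$ of bihomogeneous entries in $\bMR$, and consider $\det(A)$. The key observation is that $\det(A)$ is bihomogeneous of bidegree $(0,0)$: under any permutation, the sum of the diagonal bidegrees telescopes to zero because the row and column bidegrees are matched. Since $\bMR \cong \F[\tau,\rho]$ with both $\tau$ and $\rho$ in nonzero bidegree, the bidegree-$(0,0)$ part of $\bMR$ is precisely $\F$, which forces $\det(A) \in \F$. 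The hypothesis on $\phi \otimes_{\bMR} \F$ then says exactly that $\det(A)$ is nonzero modulo $(\tau,\rho)$, so $\det(A) = 1 \in \F^{\times}$ and $A$ is invertible over $\bMR$. Hence $\phi$ is an isomorphism.

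The main obstacle I anticipate is the bookkeeping needed to produce matched bigraded bases for $F$ and $\M$ and to confirm that $\det(A)$ does land in bidegree $(0,0)$ after summing over permutations; a careful statement of the bigraded Hilbert series comparison is required, and finite generation of $\M$ is essential so that the matrix $A$ is finite and $\det(A)$ makes sense. Once the bigraded structure of $\bMR$ is properly invoked, the rest of the argument is formal.
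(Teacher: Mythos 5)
Your proof is correct, and it takes a genuinely different route from the paper. The paper reduces in two steps: first modulo $\rho$ (via \autoref{lem:freestep1}, using that $\rho$ is central and a $\rho$-adic five-lemma induction), and then it invokes \cite[Theorem~B]{HeKr} to handle the remaining $\C$-motivic case $\cA^\C(1)=\cA^\R(1)/(\rho)$, essentially outsourcing the $\tau$-reduction. Your argument instead works directly over $\bMR$ in a single step: lift a basis of $\F\otimes_{\bMR}\M$, match bigraded $\bMR$-bases of $F$ and $\M$ (which exist because $\F\otimes_{\bMR}F\iso\F\otimes_{\bMR}\M$ as bigraded $\F$-vector spaces forces the generator multisets of the two free $\bMR$-modules to agree), and then observe that the matrix of $\phi$ has bihomogeneous determinant of bidegree $(0,0)$, hence lies in $\F$, hence is a unit. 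This is a bigraded Nakayama/determinant argument, and it is both more elementary and more self-contained than the paper's proof, since it does not rely on \cite{HeKr} or on the centrality of $\rho$. It also makes no use of the specific structure of $\cA^\R(1)$ beyond $\bMR$-freeness, so it proves a slightly more general statement.

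Two small points worth tightening. First, the justification ``both $\tau$ and $\rho$ in nonzero bidegree'' is not by itself sufficient to conclude $\bMR^{(0,0)}=\F$; what you actually need is that the bidegrees $(0,-1)$ and $(-1,-1)$ are linearly independent, so that $\tau^a\rho^b$ lands in bidegree $(0,0)$ only when $a=b=0$. Second, the remark about $\F\otimes_{\bMR}\cA^\R(1)$ having $\F$-rank $8$ is not really what drives the Hilbert-series comparison; the relevant fact is simply that $\phi\otimes_{\bMR}\F$ is a bigraded isomorphism together with the observation that a finitely generated free bigraded $\bMR$-module is determined up to isomorphism by the bigraded dimensions of its reduction mod $(\tau,\rho)$. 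Neither issue affects the correctness of the argument.

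Finally, a point you could flag explicitly but do not need to: since $\tau$ is not central in $\cA^\R(1)$, one should check that $\F\otimes_{\bMR}\cA^\R(1)$ is a well-defined quotient algebra. It is, because $\rho$ is central and $\tau$ becomes central modulo $\rho$ (from $\Sq^1\tau=\tau\Sq^1+\rho$), so $(\rho,\tau)$ generates a two-sided ideal. This is implicitly used when you speak of $\F\otimes_{\bMR}\M$ as a module over $\F\otimes_{\bMR}\cA^\R(1)$, but it does not interact with the determinant argument, which takes place entirely over the commutative ring $\bMR$.
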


The cohomology  of $\cA_1^\R$ provides an $\cA^\R$-module structure on $\cA^\R(1)$, which immediately gives us a short exact sequence 
\[ 0 \to \mr{H}^{*,*}(\Sigma^{3,1}\Ythree^\R)  \to \mr{H}^{*,*}(\cA_1^\R) \to \mr{H}^{*,*}(\Ythree^\R) \to 0\]
 of  $\cA^\R$-modules. Thus, we get a candidate for a $v_{1, \nil}$\slfmp\ in the $\R$-motivic Adams spectral sequence 
\[ \overline{v} \in \Ext_{\cA^\R}^{*,*,*}(  \mr{H}^{*,*}(\Ythree^\R),  \mr{H}^{*,*}(\Ythree^\R)) \Rightarrow [\Ythree^\R, \Ythree^\R]_{*,*}\]
which survives as there is no potential target for a differential supported by $\overline{v}$.

 \subsection*{Organization of the paper}
 In \autoref{sec:free}, we review the $\R$-motivic Steenrod algebra $\cA^\R$,  discuss the structure of its subalgebra $\cA^\R(n)$, and  prove \autoref{thm:freeR}. 
In \autoref{sec:Realization}, we construct the spectrum $\cA_1^\R$ that realizes the subalgebra $\cA^\R(1)$ and prove that it is of type $(2,1)$. 
In \autoref{selfmap}, we prove \autoref{main1} and \autoref{thm:cofibv1}; i.e., we show that $\Ythree^\R$ admits a $v_{1,\nil}$\slfmp\ and that its cofiber has the same $\cA^\R$-module structure as that of $\mr{H}^{*,*}(\cA_1^\R)$. 
In \autoref{nonexistence}, we show the non-existence of a $v_{(1,0)}$\slfmp\ on $\coneR{\hsf}$ and $\Ytwo^\R$; i.e., we prove \autoref{thm:nonexist1} and \autoref{thm:nonexist2}. 
 
\subsection*{ Acknowledgement}
The authors are indebted to Nick Kuhn for  explaining some of the subtle points of Smith's work exposed in \cite{Rav}[Appendix C], which is the key idea behind \autoref{main1}. The authors also benefited from  conversations with Mark Behrens,  Dan Isaksen, and Zhouli Xu.

\section{The $\R$-motivic Steenrod algebra and a freeness criterion} \label{sec:free}

We begin by reviewing the $\R$-motivic Steenrod algebra $\cAR$ following Voevodsky \cite{V}.
 The algebra $\cAR$ is the bigraded homotopy classes of  self-maps of the $\R$-motivic Eilenberg-Mac~Lane spectrum $\mr{H}\F^\R$:
\[ \cAR = [\mr{H}\F^\R, \mr{H}\F^\R ]_{\ast, \ast}.  \]
The unit map $\bS_\R \to \mr{H}\F^\R$ induces a canonical projection map 
\[
\epsilon: \cA^{\R} \longrightarrow  \bMR :=  [\bS_\R, \mr{H}\F^\R]_{*,*} \iso \F[\tau, \rho], 
\]
where $|\tau| = (0,-1)$ and $|\rho| = (-1,-1)$. 
Further, using the multiplication map $\mr{H}\F^\R \sma \mr{H}\F^\R \to \mr{H}\F^\R$ one can give $\cAR$ a left $\bMR$-module structure as well as a right $\bMR$-module structure. Voevodsky shows that $\cAR$ is a free left $\bMR$-module. There is an analogue of the classical Adem basis in the motivic setting, and Voevodsy established motivic Adem relations, thereby completely describing the multiplicative structure of $\cAR$. 

The motivic Steenrod algebra $\cAR$ also admits a diagonal map, so that its left $\bMR$-linear dual is an algebra over $\F$. Note that $\cAR$ is an $\F$-algebra but not an $\bMR$-algebra as $\tau$ is not a central element 
since
 \begin{equation} \label{taunotcentral}
 \Sq^1(\tau) = \rho \neq \tau \Sq^1. 
 \end{equation}
This complication is also reflected in the fact that  the pair $(\bMR, \hom_{\bMR}(\cAR,\bMR))$
is a Hopf-algebroid, and not a Hopf-algebra like its complex counterpart. The underlying algebra of the dual $\R$-motivic Steenrod algebra is given by 
\[ \cA^{\R}_{\ast} \cong \mathbb{M}_2^\R[\xi_{i+1}, \tau_i: i \geq 0 ]/( \tau_i^2 = \tau \xi_{i+1} + \rho \tau_{i+1} + \rho \tau_0 \xi_{i+1} ) \]
where $\xi_{i}$ and $\tau_{i}$ live in bidegree $(2^{i+1} -2, 2^i -1)$ and $(2^{i+1} -1, 2^i -1)$, respectively. 
The complete description of the Hopf-algebroid structure can be found in \cite{V}.

 Ricka\footnote{
Ricka actually identified the quotient Hopf-algebroids of the $\Ctwo$-equivariant dual Steenrod algebra. However, the difference between  the $\R$-motivic Steenrod algebra and the $\Ctwo$-equivariant Steenrod algebra lies only in the coefficient rings and results of Ricka easily  identifies the quotient Hopf-algebroids of the $\R$-motivic Steenrod algebra. 
} \cite{ricka} identified the quotient Hopf-algebroids of $\cA^\R_\ast$ (see also \cite{hill}).
In particular, there are quotient Hopf-algebroids 
\[ \cA^\R(n)_\ast= \cA^\R_{\ast}/ ( \xi_1^{2^{n}}, \dots , \xi_n^2, \xi_{n+1}, \dots, \tau_0^{2^{n+1}}, \dots,\tau_{n}^2,   \tau_{n+1}, \dots   )  \]
which can be thought of as analogues of the quotient Hopf-algebra 
\[ \cA(n)_\ast = \cA_\ast / (\xi_1^{2^{n+1}}, \dots, \xi_{n+1}^{2}, \xi_{n+2}, \dots  )  \]  
of the classical dual Steenrod algebra $\cA_*$. 
It is an algebraic fact that 
\[ \tau^{-1}(\cA^\R(n)_\ast /(\rho)) \cong \F[\tau^{\pm 1}] \otimes \cA(n)_\ast \]
as Hopf algebras. The  quotient Hopf-algebroid $\cA^\R(n)_\ast$ is the $\mathbb{M}_2^\R$-linear dual of the subalgebra $\cA^\R(n)$ of $\cA^\R$, which is generated  by $\{ \tau, \rho, \Sq^1, \Sq^2, \dots, \Sq^{2^{n}} \}$.  

Even though $\tau$ is not in the center \eqref{taunotcentral}, $\rho$ is in the center. We make use of this fact to prove the following result. 

\begin{lem} \label{lem:freestep1}
A finitely-generated
 $\cA^\R(n)$-module  $\M$ is free if and only if  
\begin{enumerate}
\item $\M$ is free as an $\F[\rho]$-module, and,
\item $\M/(\rho)$ is a free $\cA^\R(n)/(\rho)$-module.
\end{enumerate}
\end{lem}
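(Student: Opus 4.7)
The forward direction is essentially formal: Voevodsky's computation exhibits $\cA^\R(n)$ as a free left $\bMR$-module, and since $\rho$ is central one may restrict scalars along $\F[\rho]\hookrightarrow\bMR$ to see that $\cA^\R(n)$ is free over $\F[\rho]$. Thus any free $\cA^\R(n)$-module is automatically free over $\F[\rho]$, and reducing modulo the central regular element $\rho$ sends a free $\cA^\R(n)$-module to a free $\cA^\R(n)/(\rho)$-module. My attention would therefore be on the converse.

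Assume (1) and (2). Choose a homogeneous $\cA^\R(n)/(\rho)$-basis $\{\bar{x}_i\}$ of $\M/\rho\M$ and lift to homogeneous elements $x_i\in\M$; since $\M$ is finitely generated, a short bookkeeping argument shows the indexing set may be taken finite. The first substantive task is to show that the $x_i$ generate $\M$ as an $\cA^\R(n)$-module, for which I would apply a graded Nakayama argument to the finitely generated quotient $P:=\M/\langle x_1,\dots,x_k\rangle$, which by construction satisfies $P=\rho P$. The key observation is that although $\rho$ does not obviously lie in any augmentation ideal of $\cA^\R(n)$, it has strictly positive degree for the Milnor--Witt grading $s-2w$ on an element of bidegree $(s,w)$: the generators $\tau$, $\rho$, $\Sq^1,\dots,\Sq^{2^n}$ all have nonnegative MW-degree, and $\rho$ has MW-degree $+1$. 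Consequently any finitely generated bigraded $\cA^\R(n)$-module is bounded below in MW-degree, and an element of $P$ of minimal MW-degree cannot lie in $\rho P$, forcing $P=0$.

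Once generation is established, I would form the short exact sequence
\[ 0 \to K \to F \to \M \to 0, \qquad F := \bigoplus_{i=1}^k \cA^\R(n)\cdot\mb{1}_i, \]
and apply $-\otimes_{\F[\rho]}\F[\rho]/(\rho)$. Because $\M$ is free over $\F[\rho]$ by hypothesis (1), the $\mathrm{Tor}_1^{\F[\rho]}$-obstruction vanishes, so we obtain a short exact sequence
\[ 0 \to K/\rho K \to F/\rho F \to \M/\rho\M \to 0. \]
By our choice of lifts, the right-hand map carries the evident $\cA^\R(n)/(\rho)$-basis of $F/\rho F$ bijectively onto $\{\bar{x}_i\}$ and is therefore an isomorphism, so $K/\rho K=0$. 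Finally, $F$ is $\F[\rho]$-free, so $\bigcap_{m\geq 0}\rho^m F=0$; iterating $K=\rho K$ places $K$ inside this intersection, giving $K=0$. Hence $F\cong\M$ and $\M$ is free over $\cA^\R(n)$.

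The main subtlety I anticipate is the graded Nakayama step: without the Milnor--Witt (or some equivalent) reweighting, $\rho$ does not lie in a nilpotent or augmentation ideal of $\cA^\R(n)$, and a naive Nakayama argument would fail. Identifying a grading in which $\rho$ is strictly positive is the main conceptual ingredient; once that is in place, the Tor-sequence chase is a standard homological manipulation.
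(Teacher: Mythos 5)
Your argument is correct, but the route is genuinely different from the paper's. The paper lifts a basis of $\M/(\rho)$ to obtain a map $f\colon\mr{F}\to\M$ from a free $\cA^\R(n)$-module and then shows $f$ is an isomorphism in one shot, by inductively proving that each reduction $f_n\colon \mr{F}/(\rho^n)\to\M/(\rho^n)$ is an isomorphism: hypothesis (1) supplies $\rho$-torsion-freeness, hence exactness of the rows $0\to(-)/(\rho^{n-1})\xrightarrow{\cdot\rho}(-)/(\rho^n)\to(-)/(\rho)\to 0$, and the five lemma runs the induction from the base case $n=1$ given by hypothesis (2). You instead split the problem: surjectivity via a graded Nakayama argument, then injectivity by hitting $0\to K\to F\to\M\to 0$ with $-\otimes_{\F[\rho]}\F$, using $\mathrm{Tor}_1^{\F[\rho]}$-vanishing from (1) to get $K/\rho K=0$ and then $\rho$-separatedness of $F$ to get $K=0$. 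Both proofs consume (1) and (2) in essentially the same places and both ultimately rely on boundedness below to pass from $\rho$-adic information to the module itself, but the paper's single induction handles surjectivity and injectivity simultaneously and never needs to invoke Nakayama; your version is arguably more transparent about which hypothesis does what.

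One small slip in your Nakayama step: it is not true that all the algebra generators of $\cA^\R(n)$ have nonnegative Milnor--Witt degree in the (homotopical $s-2w$) convention where $\rho$ has degree $+1$; for instance $\Sq^1$, of cohomological bidegree $(1,0)$, has MW-degree $-1$. This does not sink the argument---finite generation over $\cA^\R(n)$, which is a finite free $\bMR$-module and hence bounded below in MW-degree, still forces $\M$ to be bounded below---but a cleaner choice is the first (topological) grading, in which $\tau$ has degree $0$, $\rho$ has degree $1$, and each $\Sq^i$ has degree $i$, so every generator genuinely sits in nonnegative degree and $\rho$ is strictly positive.
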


\begin{proof} The `only if' part is trivial. For the `if' part, choose a basis $\mathcal{B} = \{ b_1, \dots, b_n \}$ of $\M/(\rho)$ and let $\tilde{b}_i\in M$ be any lift of $b_i$. Let $\mr{F}$ denote the free $\cA^\R(n)$-module generated by $\mathcal{B}$ and consider the map
\[ f: \mr{F}  \to \M  \]
which sends $b_i \mapsto \tilde{b}_i$. We show that $f$  is an isomorphism  by inductively proving that  $f$ induces an isomorphism $\mr{F}/(\rho^n) \cong \mr{M}/(\rho^n)$  for all $n \geq 1$. The case of $n=1$ is true by assumption.

For the inductive argument,  first note that the diagram 
\[ 
\begin{tikzcd}
0 \rar \dar[equals] & \mr{F}/(\rho^{n-1}) \rar["\cdot \rho"] \dar["f_{n-1}"] & \mr{F}/(\rho^n) \rar \dar[ "f_n"] & \mr{F}/(\rho) \dar["f_0"] \rar  & 0 \dar[equals]  \\
0 \rar & \M/(\rho^{n-1}) \rar["\cdot \rho"] & \M/(\rho^n) \rar[""] & \M/(\rho) \rar& 0
\end{tikzcd}
\] 
is a diagram of $\cA^\R(n)$-modules (since $\rho$ is in the center) where the horizontal rows are exact. The map $f_0$ is an isomorphism by assumption $\emph{(2)}$, and $f_{n-1}$ is an isomorphism by the inductive hypothesis; hence, $f_n$ is an isomorphism by the five lemma. 
\end{proof}

\begin{proof}[Proof of \autoref{thm:freeR}] The result follows immediately from \autoref{lem:freestep1} combined with \cite[Theorem B]{HeKr} and the fact that 
\[ \cA^\C(n) = \cA^\R(n)/(\rho). \qedhere \]
\end{proof}

The  work of Adams and Margolis \cite{AM} provides a freeness criterion for modules over finite-dimensional subalgebras of the classical Steenrod algebra. For an $\cA(n)$-module $\mr{M}$ and  element $x \in \cA(n) $ such that $x^2 = 0$, one can define the Margolis homology of $\mr{M}$ with respect to $x$ as 
\[ \mathcal{M}(\mr{M}, x) = \frac{ \ker(x: \mr{M} \to \mr{M} )}{\mr{img}( x: \mr{M} \to \mr{M}) } .\]

\begin{thm}\cite[Theorem~4.4]{AM} \label{thm:AdamsMargolis}  A finitely generated  $\cA(n)$-module $\mr{M}$ is free if and only if  $\mathcal{M}(\mr{M}, \mr{P}^s_t) = 0$ for $0 <s < t$ with $s+t \leq n$. 
\end{thm}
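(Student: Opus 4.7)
The plan is to prove the two directions separately: the ``only if'' direction reduces to a computation inside $\cA(n)$ itself, while the ``if'' direction requires an inductive argument along a filtration of $\cA(n)$ by sub-Hopf-algebras.

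For the forward direction, note that Margolis homology commutes with direct sums, so it suffices to verify that $\mathcal{M}(\cA(n), \mr{P}^s_t) = 0$ for each admissible pair $(s,t)$. Since $(\mr{P}^s_t)^2 = 0$, the Milnor primitive generates an exterior sub-Hopf-algebra $E[\mr{P}^s_t] \subset \cA(n)$. By the Milnor--Moore theorem, $\cA(n)$ is free as a module over any normal sub-Hopf-algebra, and in particular over $E[\mr{P}^s_t]$. Freeness over an exterior algebra $E[x]$ with $x^2 = 0$ is precisely equivalent to the vanishing of Margolis homology with respect to $x$, so this direction is complete.

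For the converse, I would proceed by induction on the number of admissible pairs with $s + t \leq n$. Fix one such generator $\mr{P}^{s_0}_{t_0}$ and consider a normal extension of Hopf algebras
\[
\F \to E[\mr{P}^{s_0}_{t_0}] \to \cA(n) \to B \to \F,
\]
where $B$ has one fewer generator in the analogous indexing set. The hypothesis $\mathcal{M}(\M, \mr{P}^{s_0}_{t_0}) = 0$ implies that $\M$ is free as an $E[\mr{P}^{s_0}_{t_0}]$-module, so $\M \cong E[\mr{P}^{s_0}_{t_0}] \otimes_\F N$, where $N := \M / \mr{P}^{s_0}_{t_0} \M$ naturally carries a $B$-module structure. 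One then verifies that the remaining Margolis homologies $\mathcal{M}(N, \mr{P}^{s'}_{t'})$ over $B$ also vanish, so by the inductive hypothesis $N$ is free over $B$, whence $\M \cong E[\mr{P}^{s_0}_{t_0}] \otimes_\F (\text{free }B\text{-module})$ is free over $\cA(n)$.

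The principal technical obstacle is the last step, namely transferring the vanishing of Margolis homology from $\M$ over $\cA(n)$ to $N$ over $B$. This is where the normality of $E[\mr{P}^{s_0}_{t_0}]$ and the compatibility of the $\mr{P}^{s'}_{t'}$ with the Hopf-algebra extension must be used carefully. The cleanest formulation is a change-of-rings spectral sequence of the form $\mathcal{M}(N, \mr{P}^{s'}_{t'}) \Rightarrow \mathcal{M}(\M, \mr{P}^{s'}_{t'})$ associated to the short exact sequence of Hopf algebras above; a careful analysis of its collapse (or an explicit chain-level chase using a section of the quotient) yields the required vanishing, closing the induction.
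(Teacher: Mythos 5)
Note first that the paper does not prove this theorem; it is quoted directly from Adams--Margolis \cite{AM}, so there is no in-paper argument to compare your proposal against. (The index set ``$0 < s < t$ with $s+t \leq n$'' in the paper's statement must in any case be a typo --- for $n=1$ it is empty, and the adjacent \autoref{modulofree} uses $0 \leq s \leq t \leq n+1$ --- but the correct condition still includes pairs with $s > 0$ once $n \geq 2$.) On its own terms, your proposal has a gap that breaks both directions. You treat $\Lambda(\mr{P}^s_t)$ as a normal sub-Hopf-algebra of $\cA(n)$, invoking Milnor--Moore freeness for the forward direction and a Hopf-algebra extension $\Lambda(\mr{P}^{s_0}_{t_0}) \to \cA(n) \to B$ for the induction in the converse. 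But the primitives of $\cA$ are exactly the Milnor elements $\mr{Q}_i = \mr{P}^0_{i+1}$: for $s > 0$ the element $\mr{P}^s_t$ is not primitive (already $\mr{P}^1_1 = \Sq^2$ has the middle term $\Sq^1 \otimes \Sq^1$ in its coproduct), so $\Lambda(\mr{P}^s_t)$ is only a subalgebra and not a sub-Hopf-algebra, and neither the Milnor--Moore appeal nor the quotient $B = \cA(n)\mm\Lambda(\mr{P}^{s_0}_{t_0})$ is available. It is in fact true that $\cA(n)$ is free over $\Lambda(\mr{P}^s_t)$ for $s < t$, but establishing this requires Margolis's Poincar\'e-series and Frobenius-algebra arguments, not a sub-Hopf-algebra freeness theorem.

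In the converse you also acknowledge, but do not close, the transfer of $\mathcal{M}(\mr{M}, \mr{P}^{s'}_{t'}) = 0$ to $\mathcal{M}(N, \mr{P}^{s'}_{t'}) = 0$ for $N = \mr{M}/\mr{P}^{s_0}_{t_0}\mr{M}$. The ``change-of-rings spectral sequence'' $\mathcal{M}(N, \mr{P}^{s'}_{t'}) \Rightarrow \mathcal{M}(\mr{M}, \mr{P}^{s'}_{t'})$ you invoke is not a standard object --- Margolis homology with respect to a single square-zero element is a single graded group, and there is no evident filtration producing that as an abutment --- and, worse, $\mr{P}^{s'}_{t'}$ need not descend to a well-defined operator on $N$ at all, since the various $\mr{P}^s_t$ do not commute and $\mr{P}^{s_0}_{t_0}\mr{M}$ need not be $\mr{P}^{s'}_{t'}$-stable. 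The Adams--Margolis proof is structured quite differently: they prove a vanishing-line theorem for $\Ext_{B}(\mr{M}, \F)$ over a sub-Hopf-algebra $B \subseteq \cA$ under the Margolis-homology hypotheses, and then deduce freeness from the general fact that a bounded-below, locally finite module over a finite-dimensional connected Hopf algebra is free if and only if $\Ext^{s}(\mr{M}, \F) = 0$ for all $s > 0$. If you want a proof rather than a citation, that vanishing-line argument is the missing ingredient.
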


\begin{rmk} In the classical Steenrod algebra,  $\mr{P}^s_t$ is the element dual to $\xi_t^{2^s}$. In terms of the 
Milnor basis, this is
  $\Sq(\overbrace{0, \dots, 0}_{t-1}, 2^s)$. The element $\mr{P}^0_t$ is often denoted by $\mr{Q}_{t-1}$. 
\end{rmk}

Note that

\[ \cA^\R(n)_\ast / (\rho, \tau) = \frac{\F[\xi_1, \dots, \xi_{n}]}{(\xi_1^{2^{n} }, \dots, \xi_{n}^2 )} \otimes \Lambda(\tau_0, \dots, \tau_{n})  \] 
as a Hopf-algebra. Further, if we forget the motivic grading, we have an isomorphism 
\begin{equation} \label{quotientidentify}
\cA^\R(n) / (\rho, \tau) \cong \varphi \cA(n-1) \otimes \Lambda(\mr{P}_1^0, \dots, \mr{P}_{n}^0),
\end{equation}
where $\varphi \cA(n-1)$ denotes the `double' (see \cite[Chapter~15, Proposition~11]{BookMargolis}) of  $\cA(n-1)$. Let $$\overline{\mr{P}}^s_t = (\xi^{2^s}_t)^* \in \cA^\R(n) .$$ It can be shown that 
\[ (\overline{\mr{P}}^s_t)^2 \equiv  0 \mod (\rho, \tau) \]
 for $s \leq t$.  Combining \eqref{quotientidentify}, \autoref{thm:AdamsMargolis} and a similar result for primitively generated exterior Hopf-algebras \cite[Theorem~2.2]{AM},  we deduce: 
\begin{lem} \label{modulofree} A finitely generated $\cA^\R(n) / (\rho, \tau)$-module  $\overline{\mr{M}}$ is free if and only if $\mathcal{M}(\overline{\mr{M}}, \overline{\mr{P}}^s_t) = 0$ whenever $0 \leq s\leq t$ and $1\leq s + t \leq n+1$.
\end{lem}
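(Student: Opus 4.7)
The plan is to exploit the tensor-product decomposition \eqref{quotientidentify},
\[ \cA^\R(n)/(\rho,\tau) \;\cong\; \varphi\cA(n-1)\otimes_\F E, \]
where $E$ denotes the exterior factor generated by the Milnor primitives $\overline{\mr{P}}^0_t$, and to apply Adams--Margolis-type criteria separately to each tensor factor. Observe that the elements $\overline{\mr{P}}^s_t$ appearing in the hypothesis split naturally according to this decomposition: those with $s\geq 1$ lie in $\varphi\cA(n-1)$ and will be handled by \autoref{thm:AdamsMargolis}, while those with $s = 0$ are the exterior generators of $E$ and will be handled by \cite[Theorem~2.2]{AM}. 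The ``only if'' direction is then formal, since $\cA^\R(n)/(\rho,\tau)$ is free over each $\F[\overline{\mr{P}}^s_t]/((\overline{\mr{P}}^s_t)^2)$ as a consequence of the tensor decomposition.

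For the ``if'' direction I would first use the vanishing of $\mathcal{M}(\overline{\mr{M}},\overline{\mr{P}}^s_t)$ for $s\geq 1$, which depends only on the $\varphi\cA(n-1)$-action, to conclude via \autoref{thm:AdamsMargolis} (applied to the regraded double of $\cA(n-1)$) that $\overline{\mr{M}}$ is free over $\varphi\cA(n-1)$. Choosing a splitting gives
\[ \overline{\mr{M}} \;\cong\; \varphi\cA(n-1)\otimes_\F V, \qquad V \;:=\; \F\otimes_{\varphi\cA(n-1)}\overline{\mr{M}}, \]
and the commuting $E$-action descends to $V$. For each exterior generator $y = \overline{\mr{P}}^0_t$, the Künneth-type identification
\[ \mathcal{M}(\overline{\mr{M}},y) \;\cong\; \varphi\cA(n-1)\otimes_\F \mathcal{M}(V,y) \]
then converts the vanishing hypothesis into $\mathcal{M}(V,y) = 0$. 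Applying \cite[Theorem~2.2]{AM} to $V$ as a module over the primitively generated exterior Hopf algebra $E$ yields $V \cong E\otimes_\F W$ for some $\F$-vector space $W$, whence $\overline{\mr{M}} \cong (\cA^\R(n)/(\rho,\tau))\otimes_\F W$ is free.

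The step I expect to be the main obstacle is justifying the Künneth-type identification displayed above --- equivalently, promoting the $\varphi\cA(n-1)$-module splitting to one that is simultaneously $E$-equivariant. This should follow from the fact that the two subalgebras commute elementwise inside $\cA^\R(n)/(\rho,\tau)$, a consequence of \eqref{quotientidentify} being an algebra isomorphism once one forgets the motivic grading. Should this compatibility prove awkward to handle head-on, a fallback is to adjoin the exterior generators $\overline{\mr{P}}^0_t$ one at a time, arguing inductively via the short exact sequence attached to multiplication by $\overline{\mr{P}}^0_t$, in the spirit of the $\rho$-filtration induction used in the proof of \autoref{lem:freestep1}.
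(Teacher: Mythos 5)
Your overall approach matches the paper's: decompose $\cA^\R(n)/(\rho,\tau)\cong\varphi\cA(n-1)\otimes E$ via \eqref{quotientidentify}, handle the $\varphi\cA(n-1)$ factor with \autoref{thm:AdamsMargolis} and the exterior factor with \cite[Theorem~2.2]{AM}, and the ``only if'' direction is formal exactly as you say. You also correctly sort the $\overline{\mr{P}}^s_t$ with $s\geq 1$ as the Margolis elements of the regraded double and those with $s=0$ as the primitives of $E$.

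However, the step you flag really is a gap, and the justification you offer does not close it. Commutation of the two subalgebras does \emph{not} imply that a chosen $\varphi\cA(n-1)$-free splitting $\overline{\mr{M}}\cong\varphi\cA(n-1)\otimes V$ is $E$-equivariant, nor that $\mathcal{M}(\overline{\mr{M}},y)\cong\varphi\cA(n-1)\otimes\mathcal{M}(V,y)$. As a toy counterexample (ignoring the specific grading in the lemma), take $B=\F[z]/(z^2)$ and $E=\F[y]/(y^2)$ with $z$ and $y$ in the same degree, and let $\overline{\mr{M}}$ be the $B\otimes E$-module with $\F$-basis $\{a,za\}$ and $ya=za$. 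Then $\overline{\mr{M}}$ is $B$-free and $\mathcal{M}(\overline{\mr{M}},y)=0$, yet $V=\overline{\mr{M}}/z\overline{\mr{M}}$ has trivial $y$-action, so $\mathcal{M}(V,y)=V\neq 0$, and $\overline{\mr{M}}$ is certainly not $B\otimes E$-free. What makes the lemma true in context is the grading: the $\overline{\mr{P}}^s_t$ sit in pairwise distinct bidegrees, and one needs the refined tensor-splitting / P-algebra machinery of Adams--Margolis and Margolis's book (which is precisely what the paper is citing) rather than a bare-hands K\"unneth argument. Your inductive fallback can likewise be made to work, but it inherits the same obligation to control these degree constraints; commuting subalgebras alone are not enough.
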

 We end this section by recording the following corollary, which is immediate from \autoref{thm:freeR} and \autoref{modulofree}.
\begin{cor} \label{cor:freeR} A finitely generated $\cA^\R(n)$ module  $\M$ is free if and only if  
\begin{enumerate}
\item $\M$ is free as an $\mathbb{M}_2^\R$-module, and, 
\item $ \mathcal{M}( \M \otimes_{\mathbb{M}_2^{\R}} \F, \overline{\mr{P}}_s^t) = 0$ for $0 \leq t \leq s$ and $s + t = n+1$.
\end{enumerate}
\end{cor}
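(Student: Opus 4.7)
The plan is a two-step reduction using the two immediately preceding results. First, apply \autoref{thm:freeR}: freeness of $\M$ as an $\cA^\R(n)$-module is equivalent to (a) $\M$ being free as a $\bMR$-module, and (b) the quotient $\overline{\M} := \F \otimes_{\bMR} \M$ being free as an $\F \otimes_{\bMR} \cA^\R(n)$-module. Condition (a) is literally condition (1) of the corollary, so this step just leaves condition (b) to analyze. Since $\bMR = \F[\tau, \rho]$, I identify $\F \otimes_{\bMR} \cA^\R(n)$ canonically with $\cA^\R(n)/(\rho, \tau)$.

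Second, apply \autoref{modulofree} to the $\cA^\R(n)/(\rho, \tau)$-module $\overline{\M}$. This turns freeness in (b) into the vanishing of Margolis homologies $\mathcal{M}(\overline{\M}, \overline{\mr{P}}^s_t)$ over the range prescribed by the lemma, which I then match to condition (2) of the corollary.

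The main obstacle is bookkeeping the two index ranges. The lemma insists on $\overline{\mr{P}}^s_t$ for $0 \leq s \leq t$ with $1 \leq s + t \leq n + 1$, while the corollary asks only for the ``top'' range $s + t = n + 1$ (with the sub/super-script convention swapped). To bridge these, I would invoke the tensor-factorization \eqref{quotientidentify}, namely $\cA^\R(n)/(\rho, \tau) \cong \varphi \cA(n-1) \otimes \Lambda(\mr{P}^0_1, \dots, \mr{P}^0_n)$, and argue that for a module over such a tensor-product Hopf algebra, freeness is governed only by the Margolis homology at the extremal (top-filtration) elements of each factor; vanishing of Margolis homology at the top then propagates downward by an Adams--Margolis argument applied factor-wise. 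This inductive propagation is the step that genuinely requires verification rather than formal invocation, so I expect most of the work of the corollary to be concentrated there.
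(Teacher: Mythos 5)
Your two-step reduction --- invoking \autoref{thm:freeR} to convert $\cA^\R(n)$-freeness into $\bMR$-freeness plus freeness of $\overline{\M} := \F \otimes_{\bMR} \M$ over $\cA^\R(n)/(\rho,\tau)$, then invoking \autoref{modulofree} to reformulate the latter as vanishing of Margolis homology --- is exactly what the paper does, and these two inputs are the \emph{entire} proof: the paper declares the corollary to be ``immediate'' from them, with no further step.

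The range discrepancy you flag is therefore not something the paper bridges with a separate argument. A look at how \autoref{cor:freeR} is actually \emph{used}, in the proof of \autoref{thm:A1R}, resolves the puzzle: there the Margolis homology is shown to vanish at all three of $\overline{\mr{P}}^0_1$, $\overline{\mr{P}}^1_1$, $\overline{\mr{P}}^0_2$ --- the full range prescribed by \autoref{modulofree} for $n=1$ --- and only then is \autoref{cor:freeR} invoked. Together with the word ``immediate,'' this makes it clear that the condition in \autoref{cor:freeR} is meant to read $1 \leq s+t \leq n+1$ rather than $s+t = n+1$ (up to the harmless sub/superscript transposition relative to \autoref{modulofree}). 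Read that way, the corollary is a direct restatement of \autoref{thm:freeR} plus \autoref{modulofree} and nothing is left to prove, which is what you should say.

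The ``propagation downward'' argument you sketch to justify the narrower range would not work in any case, so you should not pursue it. For a primitively generated exterior Hopf algebra over $\F$ such as $\cA^\R(1)/(\rho,\tau) \cong \Lambda(\overline{\mr{P}}^0_1,\overline{\mr{P}}^1_1,\overline{\mr{P}}^0_2)$, vanishing of Margolis homology at a subset of the generators does \emph{not} force it at the rest: the module $\Lambda(\overline{\mr{P}}^1_1,\overline{\mr{P}}^0_2)$ with $\overline{\mr{P}}^0_1$ acting trivially has zero Margolis homology at $\overline{\mr{P}}^1_1$ and at $\overline{\mr{P}}^0_2$ but nonzero Margolis homology at $\overline{\mr{P}}^0_1$, and it is not free. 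The tensor factorization $\cA^\R(n)/(\rho,\tau) \cong \varphi\cA(n-1)\otimes\Lambda(\dots)$ does not rescue the reduction either: the Adams--Margolis criterion for the exterior factor already demands that you test \emph{every} exterior generator, not just the top-degree one, so ``top of each factor'' is strictly weaker than what freeness requires. There is no theorem allowing the reduction to $s+t = n+1$; the correct statement (and the one the paper uses in practice) checks the full range.
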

\section{A realization of $\cAR(1)$}
\label{sec:Realization}
Consider the $\R$-motivic question mark complex $\QR$, as introduced in Subsection~\ref{outline}. Let $\Sigma_n$ act on $\QR^{\sma n}$ by permutation. Any element $e \in  \Z_{(2)}[\Sigma_n]$ produces a canonical map 
\[ 
\begin{tikzcd}
\tilde{e}: \QR^{\sma n} \rar & \QR^{\sma n}.
\end{tikzcd}
\]
Now let $e$ be the idempotent
 \[ \mbox{$e= \frac{1+(1\ 2) - (1 \ 3) - (1\ 3 \ 2)}3$} \] 
  in $\Z_{(2)}[\Sigma_3]$,  and denote by $\overline{e}$ the resulting idempotent of $\F[\Sigma_3]$. We record the following important property of $\overline{e}$ which is a special case of  \cite[Theorem C.1.5]{Rav}.

  \begin{lem} \label{lem:vanish} If $V$ is a finite-dimensional $\F$-vector space, then $\overline{e}(V^{\otimes 3})=0$ if and only if $\dim V\leq 1$. 
  \end{lem}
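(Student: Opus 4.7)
My first step is to simplify the idempotent $\overline{e}$ in $\F[\Sigma_3]$. Since $-1\equiv 1$ and $3^{-1}\equiv 1$ modulo $2$, the reduction of $e$ becomes
\[ \overline{e} = 1 + (1\ 2) + (1\ 3) + (1\ 3\ 2) \in \F[\Sigma_3]. \]
With $\overline{e}$ in this explicit form, the plan is simply to verify each implication by a direct calculation on $V^{\otimes 3}$, endowed with its standard $\Sigma_3$-action by permutation of tensor factors.

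For the implication $\dim V \leq 1 \Rightarrow \overline{e}(V^{\otimes 3})=0$, the case $V=0$ is vacuous, and if $\dim V = 1$ then $V^{\otimes 3}$ is one-dimensional and $\Sigma_3$ acts trivially, so $\overline{e}$ acts as the scalar $1+1+1+1 = 0 \in \F$. (Equivalently, the integer coefficients of $e$ already sum to $0$, so $e$ is zero on any trivial representation regardless of characteristic.)

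For the reverse implication, I will assume $\dim V \geq 2$, pick linearly independent vectors $v,w \in V$, and compute $\overline{e}(v\otimes v\otimes w)$ explicitly. The key observation is that $(1\ 2)$ stabilizes $v\otimes v\otimes w$, so the two summands of $\overline{e}$ coming from $1$ and $(1\ 2)$ cancel in characteristic two. The remaining transpositions $(1\ 3)$ and $(1\ 3\ 2)$ carry $v\otimes v \otimes w$ to the two distinct basis vectors $w\otimes v \otimes v$ and $v\otimes w \otimes v$ respectively, yielding
\[ \overline{e}(v\otimes v\otimes w) \;=\; w\otimes v\otimes v + v\otimes w\otimes v, \]
which is nonzero because $\{v,w\}$ extends to a basis of $V$ and the two summands are distinct elements of the induced basis of $V^{\otimes 3}$.

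The computation is elementary; the only thing to keep track of is the convention for the $\Sigma_3$-action in order to confirm that $(1\ 3\ 2)$ and $(1\ 3)$ land in \emph{different} positions of the three-element orbit of $v\otimes v\otimes w$. Conceptually, $e$ is (up to rescaling) the Young symmetrizer attached to the partition $(2,1)$ of $3$, and the lemma amounts to the familiar fact that the associated Schur functor vanishes on vector spaces of dimension less than $2$ but is nonzero as soon as there are two independent ``letters'' available.
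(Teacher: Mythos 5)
Your proof is correct, but it takes a different route from the paper: the paper gives no argument at all and simply cites \cite[Theorem~C.1.5]{Rav} (Jeff Smith's general theory of Young symmetrizers), whereas you give a self-contained elementary calculation. Your reduction of $e$ mod $2$ to $\overline{e}=1+(1\,2)+(1\,3)+(1\,3\,2)$ is right, your scalar computation $1+1+1+1=0$ disposes of $\dim V\leq 1$, and with the standard left action $\sigma\cdot(x_1\otimes x_2\otimes x_3)=x_{\sigma^{-1}(1)}\otimes x_{\sigma^{-1}(2)}\otimes x_{\sigma^{-1}(3)}$ the evaluation $\overline{e}(v\otimes v\otimes w)=w\otimes v\otimes v+v\otimes w\otimes v$ does come out nonzero exactly as you say. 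Your closing remark identifying $e$ (up to the scalar $1/3$) with the Young symmetrizer of the partition $(2,1)$ is also accurate and is precisely the conceptual content of the Ravenel/Smith reference; what your approach buys is a proof the reader can check in a few lines without tracking down the general machinery, at the cost of generality (the cited theorem handles arbitrary partitions and primes). Two cosmetic points: $(1\,3\,2)$ is a $3$-cycle, not a transposition, so the phrase ``the remaining transpositions'' is a slip; and rather than leaving the action convention as a caveat, it is cleaner to just state the convention once and note that if one instead used the opposite (right-action) convention the same conclusion follows by evaluating on $v\otimes w\otimes v$.
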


The following result, which gives the values of $\overline{e}$ on induced representations, is also straightforward to verify:

\begin{lem}\label{eInduced} 
Suppose that $W= \mathrm{Ind}_{C_2}^{\Sigma_3} \F$ is induced up from the trivial representation of a cyclic 2-subgroup. Then $\overline{e}(W) \iso \F$. Moreover, for the regular representation $\F[\Sigma_3] = \mathrm{Ind}_{e}^{\Sigma_3} \F$, we have 
$\dim \overline{e}( \F[\Sigma_3]) = 2$.
\end{lem}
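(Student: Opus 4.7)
The plan is to first reduce the idempotent $e$ modulo $2$, and then do two small explicit calculations in $\F[\Sigma_3]$. Since $-1 \equiv 1$ and $3 \equiv 1$ in $\F = \F_2$, the reduction is $\overline{e} = 1 + (12) + (13) + (132)$.

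For the first claim, I would identify $W = \mathrm{Ind}_{C_2}^{\Sigma_3} \F$ with the standard permutation representation of $\Sigma_3$ on the three left cosets of $C_2 = \langle (12) \rangle$, giving a basis $\{v_1, v_2, v_3\}$. Applying $\overline{e}$ to each $v_i$ then amounts to tallying, modulo $2$, how the four summands of $\overline{e}$ permute the cosets. A short direct computation gives $\overline{e}(v_1) = \overline{e}(v_2) = v_2 + v_3$ and $\overline{e}(v_3) = 0$, so $\overline{e}(W) = \F \cdot (v_2 + v_3) \iso \F$.

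For the second claim, the key observation is that right multiplication by $(13)$ preserves the subset $\{1, (12), (13), (132)\} \subset \Sigma_3$ (it swaps $1 \leftrightarrow (13)$ and $(12) \leftrightarrow (132)$). Hence $\overline{e} \cdot (13) = \overline{e}$, and so $\overline{e} \cdot g$ depends only on the right coset of $\langle (13) \rangle$ containing $g$. Since there are exactly three such cosets, the image $\overline{e}(\F[\Sigma_3]) = \overline{e} \cdot \F[\Sigma_3]$ is spanned by three explicit vectors $\alpha_1, \alpha_2, \alpha_3$. Inspection shows that each of the six elements of $\Sigma_3$ occurs in exactly two of the $\alpha_i$, so $\alpha_1 + \alpha_2 + \alpha_3 = 0$ in characteristic $2$, while any two of the $\alpha_i$ are manifestly linearly independent. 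This yields $\dim \overline{e}(\F[\Sigma_3]) = 2$.

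There is no real obstacle beyond $\Sigma_3$-multiplication bookkeeping; both computations are elementary. A more conceptual route via Mackey decomposition or the $\mathrm{Hom}$-adjunction $\overline{e} V \iso \mathrm{Hom}_{\F[\Sigma_3]}(\F[\Sigma_3]\overline{e}, V)$ is available, but for a group as small as $\Sigma_3$ the direct calculation is shortest.
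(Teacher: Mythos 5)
Your proof is correct. The paper offers no argument here, stating only that the lemma is "straightforward to verify," and your direct $\F_2[\Sigma_3]$ computation is exactly the intended verification; the observation that $\overline{e}\cdot(13)=\overline{e}$ (so that $\overline{e}g$ is constant on right cosets of $\langle(13)\rangle$, reducing the regular-representation check to three vectors with sum zero) is a clean way to organize the second half.
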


  We also record the fact that  when  $\dim_{\F} V = 2$ and $\dim_{\F} W = 3$ then 
  \begin{equation} \label{dimcount}
  \dim_{\F} \overline{e}(V^{\otimes 3}) = 2 \quad \text{and} \quad
  \dim_{\F} \overline{e}(W^{\otimes 3}) = 8,
    \end{equation}
  as we will often use this.   

 The bottom cell of $\tilde{e}(\QR^{\sma 3})$ is in degree $(1,0)$, and we
 define 
\begin{equation} \label{defnA1}
 \cA_1^\R := \Sigma^{-1,0}\tilde{e}(\QR^{\sma 3}) = \Sigma^{-1,0}\underset{\to}{\mr{hocolim}}(  \QR^{\sma 3} \overset{\tilde{e}}\to  \QR^{\sma 3} \overset{\tilde{e}}\to \dots  ).
 \end{equation}
The purpose of this section is to  prove the following theorem.

\begin{thm}
\label{thm:A1R}
The spectrum $\cA_1^\R$ is a type $(2,1)$ complex whose bi-graded cohomology $\mr{H}^{*,*}(\cA_1^\R)$ is a free $\cA^\R(1)$-module on one generator.
\end{thm}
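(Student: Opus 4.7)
The plan is to compute $\rH^{*,*}(\cA_1^\R)$ directly from the idempotent construction and verify freeness over $\cA^\R(1)$ via \autoref{cor:freeR}. Since the telescope defining $\tilde{e}(\QR^{\sma 3})$ splits off the idempotent image in cohomology, $\rH^{*,*}(\tilde{e}(\QR^{\sma 3})) \cong \overline{e}(\rH^{*,*}(\QR^{\sma 3}))$, where $\overline{e}$ is the mod-$2$ reduction of $e$ in $\F[\Sigma_3]$. The cell diagram for $\QR$ exhibits $\rH^{*,*}(\QR)$ as a free $\bMR$-module of rank $3$ on classes $x_0, x_1, x_3$ in bidegrees $(0,0), (1,0), (3,1)$ with $\Sq^1 x_0 = x_1$ (detecting $\hsf$) and $\Sq^2 x_1 = x_3$ (detecting $\eta_{1,1}$). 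The motivic Künneth theorem gives $\rH^{*,*}(\QR^{\sma 3}) \cong \bMR \otimes_\F V^{\otimes 3}$ where $V = \Span_\F\{x_0, x_1, x_3\}$, and $\Sigma_3$ permutes tensor factors $\bMR$-linearly; thus $\rH^{*,*}(\cA_1^\R)$ is identified with $\bMR \otimes_\F \overline{e}(V^{\otimes 3})$ up to a $(-1, 0)$ bidegree shift.

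First I would establish $\dim_\F \overline{e}(V^{\otimes 3}) = 8$, making $\rH^{*,*}(\cA_1^\R)$ free over $\bMR$ of rank $8$ (the rank of $\cA^\R(1)$). The idea is to decompose $V = A \oplus B$ with $\dim A = 1$ (spanned by $x_0$) and $\dim B = 2$ (spanned by $x_1, x_3$), and split $V^{\otimes 3}$ into $\Sigma_3$-subrepresentations indexed by the subset $S \subseteq \{1,2,3\}$ of positions carrying a $B$-factor. The $|S|=0$ piece is annihilated by \autoref{lem:vanish}; the $|S|=1$ and $|S|=2$ pieces are induced from $C_2$-stabilizers and contribute dimensions $2$ and $4$ respectively by \autoref{eInduced}; the $|S|=3$ piece contributes $2$ by \eqref{dimcount}. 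The surviving classes first appear in bidegree $(1,0)$ of $\tilde{e}(\QR^{\sma 3})$, which after the $\Sigma^{-1,0}$ shift becomes the expected generator in bidegree $(0,0)$.

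To upgrade from $\bMR$-freeness to $\cA^\R(1)$-freeness via \autoref{cor:freeR}, I would verify vanishing of the Margolis homologies of $\overline{e}(V^{\otimes 3})$ with respect to $\overline{\mr{P}}^0_2 = \mr{Q}_1$ and $\overline{\mr{P}}^1_1 = \Sq^2$. Both elements are primitive in the quotient Hopf-algebra $\cA^\R(1)/(\rho,\tau) \cong \varphi\cA(0) \otimes \Lambda(\mr{Q}_0, \mr{Q}_1)$ of \eqref{quotientidentify}, so the Margolis Künneth formula applies in the quotient. A direct computation gives $\mathcal{M}(V, \mr{Q}_1) = \F\{x_1\}$ (using $\mr{Q}_1 x_0 = \Sq^2 x_1 = x_3$ and $\mr{Q}_1 x_1 = \mr{Q}_1 x_3 = 0$) and $\mathcal{M}(V, \Sq^2) = \F\{x_0\}$. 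Hence $\mathcal{M}(V^{\otimes 3}, -)$ is one-dimensional in each case, generated by the $\Sigma_3$-invariants $x_1^{\otimes 3}$ and $x_0^{\otimes 3}$. Since $\overline{e} \equiv 1 + (1\,2) + (1\,3) + (1\,3\,2) \pmod 2$ acts on any $\Sigma_3$-invariant as $4 \equiv 0$, both generators are killed, forcing $\mathcal{M}(\overline{e}(V^{\otimes 3}), -) = 0$. By \autoref{cor:freeR}, $\rH^{*,*}(\cA_1^\R)$ is a free $\cA^\R(1)$-module of rank $8/8 = 1$.

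The main obstacle will be justifying the Künneth reduction for the $\Sq^2$-Margolis computation: $\Sq^2$ is not primitive in the full motivic Steenrod algebra (its Cartan formula carries a cross-term $\Sq^1 \otimes \Sq^1$), and becomes primitive only after passing to the doubled factor $\varphi\cA(0)$ of $\cA^\R(1)/(\rho,\tau)$. Carefully controlling the Hopf-algebra structure change under \eqref{quotientidentify} so that Margolis Künneth applies in the quotient is the technical crux. Finally, the type $(2,1)$ claim should follow from the rank-$1$ freeness: under Betti realization, the underlying classical spectrum of $\cA_1^\R$ is the Smith realization of $\cA(1)$ (type $2$), while the geometric fixed-point cohomology reduces to a free $\cA(0) \otimes \Lambda(\mr{Q}_1)$-module on one generator, giving a type $1$ wedge of $\mr{C}(\eta)$-like summands.
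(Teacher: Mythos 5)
Your freeness argument tracks the paper's method closely: you apply \autoref{cor:freeR} via Margolis homology, invoke primitivity of the $\overline{\mr{P}}^s_t$ in $\cA^\R(1)/(\rho,\tau)$, and observe that $\overline{e}$ annihilates the (one-dimensional, $\Sigma_3$-invariant) Margolis homology of $V^{\otimes 3}$. The observation that $\overline{e} \equiv 1 + (1\,2) + (1\,3) + (1\,3\,2)$ kills invariants is the same content as \autoref{lem:vanish} applied to a one-dimensional $\F$-vector space; your explicit $\Sigma_3$-decomposition of $V^{\otimes 3}$ via the rank-$1$/rank-$2$ split is a more granular route to \eqref{dimcount} than the paper gives. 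Two caveats: first, as the paper's own proof makes explicit, you must also check $\overline{\mr{P}}^0_1 = Q_0 = \Sq^1$, not only $\overline{\mr{P}}^1_1$ and $\overline{\mr{P}}^0_2$ (the paper checks all of $(s,t)\in\{(0,1),(1,1),(0,2)\}$); this is an easy addition since $\mathcal{M}(V,\Sq^1)=\F\{x_3\}$ is again one-dimensional. Second, your concern about $\Sq^2$ not being primitive is resolved exactly as you guess, but the paper already builds this in by stating \autoref{cor:freeR} in terms of $\M/(\rho,\tau)$.

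The genuine gap is in the type $(2,1)$ claim. Your assertion that ``the geometric fixed-point cohomology reduces to a free $\cA(0) \otimes \Lambda(\mr{Q}_1)$-module on one generator'' fails a dimension count: $\mr{H}^*(\Phi^{\Ctwo}(\cA_1^{\Ctwo});\F)$ is $8$-dimensional (it is $\overline{e}$ of a $27$-dimensional space), while a free $\cA(0)\otimes\Lambda(Q_1)$-module on one generator is only $4$-dimensional. More fundamentally, there is no clean algebraic deduction of the geometric fixed-point cohomology from the $\cA^\R(1)$-freeness of $\mr{H}^{*,*}(\cA_1^\R)$: the geometric fixed-point functor does not interact with the Eilenberg--Mac Lane coefficients in a way that would make this a consequence. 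The paper handles this by a separate direct computation: $\Phi^{\Ctwo}(\QC)$ is the ``exclamation mark'' complex $\mathcal{E} \simeq \bS^0 \vee \Sigma\M_2(1)$ (since $\Phi^{\Ctwo}(\hsf)=0$ and $\Phi^{\Ctwo}(\eta_{1,1})=2$), and then $\tilde e(\mathcal{E}^{\sma 3})$ is analyzed via the $\Sigma_3$-equivariant splitting of $\mathcal{E}^{\sma 3}$, yielding $\M_2(1) \vee \Sigma(\M_2(1)\sma\M_2(1)) \vee \Sigma^3\M_2(1)$ — a wedge of mod-$2$ Moore spectra and their smash square, not $\mr{C}(\eta)$-like summands. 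That wedge is manifestly type $1$. Your deduction of the type-$2$ claim for the underlying spectrum from $\cA(1)$-freeness is fine, but the geometric fixed-point side needs its own argument along the lines of \autoref{lem:splitfixA}.
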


\subsection{$\cAR_1$ is of type $(2,1)$}
 Let $ \cA_1^{\Ctwo} := \Betti(\cA_1^\R) \text{ and } \QC := \Betti(\QR).$ Note that we have a $\Ctwo$-equivariant splitting 
\[ \QC^{\sma 3} \simeq \tilde{e}(\QC^{\sma 3}) \vee (1 - \tilde{e})(\QC^{\sma 3})\]
which splits the underlying spectra as well as  the geometric fixed-points, as both $\Phi^e$ and $\Phi^{\Ctwo}$ are additive functors.

 We will identify the underlying spectrum $\Phi^e(\cA_1^{\Ctwo})$ by studying the $\cA$-module structure of its cohomology with $\F$-coefficients. 
Firstly, note that
\[ \Phi^e(\cA_1^{\Ctwo}) \simeq \Sigma^{-1} \tilde{e}( \Phi^e(\QC^{\sma 3}) ) \simeq   \Sigma^{-1}\tilde{e}( \cQ^{\sma 3} ), \]
where $\cQ$ is the classical question mark complex, whose $\mr{H}\F$-cohomology as an $\cA$-module is well understood. It consists of three $\F$-generators $a$, $b$, and $c$ in internal degrees $0$, $1$, and $3$, such that $\Sq^1(a) = b$ and $\Sq^2(b) = c$ are the only nontrivial relations, as displayed in \autoref{fig:HQ}.
\begin{figure}[h]
\[
\mr{H}^*(\cQ; \F)  = \!\! \raisebox{-3em}{
\begin{tikzpicture}\begin{scope}[ thick, every node/.style={sloped,allow upside down}, scale=0.7]
\draw (0,0)  node[inner sep=0] (v0) {} -- (0,1) node[inner sep=0] (v1) {};
\draw (0,3) node[inner sep=0] (v3) {};
 \draw[blue] (v1) to [out=150,in=-150] (v3);
\filldraw (v0) circle (2.5pt);
\filldraw (v1) circle (2.5pt);
\filldraw (v3) circle (2.5pt);
\draw (0,0) node[right]{$\ a$};
\draw (0,1) node[right]{$\ b$};
\draw (0,3) node[right]{$\ c$};
\end{scope}\end{tikzpicture}}
\]
\caption{We depict the $\cA$-structure of $\mr{H}^{*}(\cQ; \F)$ by marking $\Sq^1$-action by black straight lines and $\Sq^2$-action by blue curved lines between the $\F$-generators. }
\label{fig:HQ}
\end{figure}
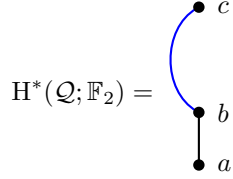

 Because of the Kunneth isomorphism and the fact that the  Steenrod algebra is cocommutative, we have an isomorphism of $\cA$-modules 
\[ \mr{H}^{*+1}( \Phi^e(\cA_1^{\Ctwo});\F) \cong \mr{H}^*(\tilde{e}(\cQ^{\sma 3}); \F) \iso  \overline{e}(\mr{H}^*(\cQ; \F)^{\otimes 3}). \]

\begin{lem} \label{lem:underA} The underlying  $\cA(1)$-module structure of $\mr{H}^*( \Phi^e(\cA_1^{\Ctwo});\F)$ is free on a single generator. 
\end{lem}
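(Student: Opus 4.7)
My plan is to apply the Adams--Margolis criterion (\autoref{thm:AdamsMargolis}): a finitely generated $\cA(1)$-module is free if and only if its Margolis homologies with respect to $\mathrm{Q}_0 = \Sq^1$ and $\mathrm{Q}_1 = \Sq^1\Sq^2 + \Sq^2\Sq^1$ both vanish. Since $\mr{H}^*(\Phi^e(\cA_1^{\Ctwo});\F) \iso \overline{e}\bigl(\mr{H}^*(\cQ;\F)^{\otimes 3}\bigr)$ up to a degree shift, it suffices to verify freeness (with a single generator) for the right-hand side.

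First, I would compute the Margolis homologies of $\mr{H}^*(\cQ;\F)$ directly from its $\cA$-module structure in \autoref{fig:HQ}. The operation $\mathrm{Q}_0$ sends $a \mapsto b$, and sends $b, c \mapsto 0$ for degree reasons combined with $\Sq^1\Sq^1=0$, so $\mathcal{M}(\mr{H}^*(\cQ;\F),\mathrm{Q}_0) = \F\{c\}$. The operation $\mathrm{Q}_1$ sends $a \mapsto c$, and again sends $b, c \mapsto 0$, so $\mathcal{M}(\mr{H}^*(\cQ;\F), \mathrm{Q}_1) = \F\{b\}$. Both are one-dimensional over $\F$.

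Next, since $\mathrm{Q}_0, \mathrm{Q}_1$ are primitive elements of $\cA$ with square zero, the Künneth formula for Margolis homology yields
\[ \mathcal{M}\bigl(\mr{H}^*(\cQ;\F)^{\otimes 3}, \mathrm{Q}_i\bigr) \iso \mathcal{M}\bigl(\mr{H}^*(\cQ;\F), \mathrm{Q}_i\bigr)^{\otimes 3}, \]
which is again one-dimensional. Because the coproduct on $\cA$ is cocommutative, the permutation action of $\Sigma_3$ on $\mr{H}^*(\cQ;\F)^{\otimes 3}$ is by $\cA$-module maps, so $\overline{e}$ is $\cA$-linear. As an idempotent, $\overline{e}$ splits $\mr{H}^*(\cQ;\F)^{\otimes 3}$ as an $\cA$-module, and hence
\[ \mathcal{M}\bigl(\overline{e}(\mr{H}^*(\cQ;\F)^{\otimes 3}), \mathrm{Q}_i\bigr) \iso \overline{e}\bigl(\mathcal{M}(\mr{H}^*(\cQ;\F),\mathrm{Q}_i)^{\otimes 3}\bigr). \]
Applying \autoref{lem:vanish} with $\dim V = 1$, this vanishes for $i = 0, 1$.

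The Adams--Margolis criterion now implies that $\overline{e}(\mr{H}^*(\cQ;\F)^{\otimes 3})$ is a free $\cA(1)$-module. To see that it is free on a single generator, compare $\F$-dimensions: $\dim_\F \cA(1) = 8$ and, by \eqref{dimcount}, $\dim_\F \overline{e}(\mr{H}^*(\cQ;\F)^{\otimes 3}) = 8$, so the rank is $1$. No serious obstacle is anticipated; the only nontrivial point is the Künneth formula for Margolis homology, which is a standard consequence of $\mathrm{Q}_i$ being primitive with $\mathrm{Q}_i^2 = 0$.
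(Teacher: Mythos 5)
Your proposal is correct and follows essentially the same route as the paper's proof: compute that $\mathcal{M}(\mr{H}^*(\cQ;\F), Q_i)$ is one-dimensional for $i=0,1$, pass through the Künneth isomorphism for Margolis homology together with the $\cA$-linearity of $\overline{e}$ (coming from cocommutativity), invoke \autoref{lem:vanish} to conclude the Margolis homologies of $\overline{e}(\mr{H}^*(\cQ;\F)^{\otimes 3})$ vanish, and finish with the dimension count \eqref{dimcount}. The only cosmetic difference is that you spell out the Margolis homology computations and cite the Adams--Margolis criterion in the form of \autoref{thm:AdamsMargolis}, whereas the paper states the hypothesis and cites \cite[Theorem~3.1]{AM} directly, but the argument is the same.
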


\begin{proof} Let us denote the $\cA$-module $\mr{H}^*(\cQ; \F)$ by $\mr{V}$. Since $\dim \mathcal{M}(\mr{V}, Q_i) = 1$ for $i \in \{ 0,1 \}$,  it follows from the Kunnneth isomorphism of  $Q_i$-Margolis homology groups, cocommutativity of the Steenrod algebra, and \autoref{lem:vanish} that  
\[ \mathcal{M}(\overline{e}(\mr{V}^{\otimes 3}), Q_i) =  \overline{e}(\mathcal{M}(\mr{V}, Q_i)^{\otimes 3}) =0 \]
for $i=\{ 1,2 \}$. It follows from \cite[Theorem~3.1]{AM}
that $\mr{H}^*( \Phi^e(\cA_1^\R);\F)$ is free as an $\cA(1)$-module. It is singly generated because of \eqref{dimcount}. 
\end{proof}
We explicitly identify the image  of 
$ \overline{e}: \mr{H}^*(\cQ; \F)^{\otimes 3} \longrightarrow \mr{H}^*(\cQ; \F)^{\otimes 3} $ in \autoref{fig:A1}. 
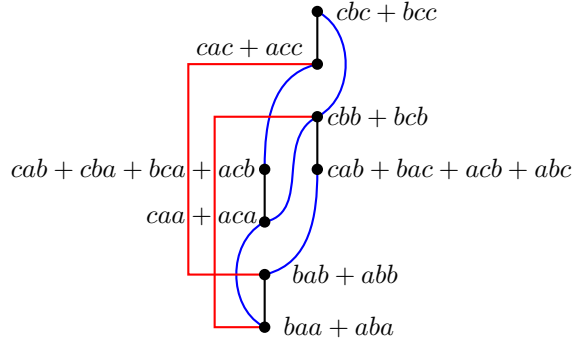
\begin{figure}[h]
\[
\begin{tikzpicture}\begin{scope}[thick, every node/.style={sloped,allow upside down}, scale=0.7]
\draw (0,0)  node[inner sep=0] (v00) {} -- (0,1) node[inner sep=0] (v01) {};
\draw (0,2)  node[inner sep=0] (v11) {} -- (0,3) node[inner sep=0] (v12) {};
\draw (1,3)  node[inner sep=0] (v22) {} -- (1,4) node[inner sep=0] (v23) {};
\draw (1,5)  node[inner sep=0] (v33) {} -- (1,6) node[inner sep=0] (v34) {};
 \draw [color=blue] (v00) to [out=150,in=-150] (v11);
 \draw [color=blue] (v01) to [out=15,in=-90] (v22);
 \draw [color=blue] (v12) to [out=90,in=-165] (v33);
 \draw [color=blue] (v23) to [out=30,in=-30] (v34);
 \draw [color=blue] (v11) to [out=15,in=-150] (v23);
 \draw [color=red] (0,0) -- (-.95,0) -- (-.95,4) -- (1,4);
 \draw [color=red] (0,1) -- (-1.45,1) -- (-1.45,5) -- (1,5);
\filldraw (v00) circle (2.5pt);
\filldraw (v01) circle (2.5pt);
\filldraw (v11) circle (2.5pt);
\filldraw (v12) circle (2.5pt);
\filldraw (v22) circle (2.5pt);
\filldraw (v23) circle (2.5pt);
\filldraw (v33) circle (2.5pt);
\filldraw (v34) circle (2.5pt);
\draw (0,0) node[right]{$\ baa+aba$} (0,1) node[right]{$\ \ bab+abb$} (0,2) node[left,xshift={1ex},yshift={2pt}]{$caa+aca \ $ } (0,3) node[left]{$cab+cba+bca+acb$};
\draw (1,6) node[right]{$\ cbc+bcc$} (1.5,5.3) node[left,xshift={-2ex}]{$cac+acc \ $} (1,4) node[right]{$  cbb+bcb $} (1,3) node[right]{$cab+bac+acb+abc$};
\end{scope}\end{tikzpicture}
\]
\caption{The $\cA$-module structure of $\mr{H}^{*}(\Phi^e(\cA_1^{\Ctwo}); \F)$ :  Black straight lines,  blue curved lines, and   red boxed lines represent the $\Sq^1$-action, $\Sq^2$-action, and $\Sq^4$-action, respectively.} 
\label{fig:A1}
\end{figure}

\begin{rmk}  \label{rem:A00}Using the Cartan formula, we can identify the action of $\Sq^4$ on $\Phi^e(\cA_1^{\Ctwo})$. We notice that its $\cA$-module structure is isomorphic to $A_1[10]$ of \cite{BEM}. Since such an $\cA$-module is realized by a unique $2$-local finite spectrum, we conclude \[ \Phi^e(\cA_1^{\Ctwo}) \simeq A_1[10] \]
and is of type $2$. 
\end{rmk}

Our next goal is to understand the homotopy type of the geometric fixed-point spectrum $\Phi^{\Ctwo}(\cA_1^{\Ctwo})$. First observe that the geometric fixed-points of the $\Ctwo$-equivariant question mark
 complex $\QC$ is the \emph{exclamation mark} complex 
\[ 
\mathcal{E} := \ \ \!\! \raisebox{-2em}{
 \begin{tikzpicture}\begin{scope}[thick, every node/.style={sloped,allow upside down}, scale=0.7]
\draw (0,0)  node[inner sep=0] (v0) {};
\draw (0,1) node[inner sep=0] (v1) {};
\draw (0,2) node[inner sep=0] (v3) {};
 \draw (v1) to  (v3);
\filldraw (v0) circle (4pt);
\filldraw (v1) circle (4pt);
\filldraw (v3) circle (4pt);
\filldraw[white] (v0) circle (3pt);
\filldraw[white] (v1) circle (3pt);
\filldraw[white] (v3) circle (3pt);

\end{scope}\end{tikzpicture}} \ \  \simeq \ \ \bS^0 \vee \Sigma\M_2(1)!
\]
This is because  $\Phi^{\Ctwo}(\hsf) = 0 $ and $\Phi^{\Ctwo}(\eta_{1,1}) = 2$. Secondly,
\[  
\mr{H}^{*+1}( \Phi^{\Ctwo}(\cA_1^{\Ctwo}); \F  ) \iso 
\mr{H}^* ( \tilde{e}( \mathcal{E}^{\sma 3} ); \F) \iso
 \overline{e}( \mr{H}^*( \mathcal{E} ; \F  )^{\otimes 3})
\]
is an isomorphism of $\cA$-modules. 
\begin{figure}[h] 
\[
\begin{tikzpicture}\begin{scope}[ thick, every node/.style={sloped,allow upside down}, scale=0.7]
\draw (0,0)  node[inner sep=0] (v00) {} -- (0,1) node[inner sep=0] (v01) {};
\draw (1,1)  node[inner sep=0] (v11) {} -- (1,2) node[inner sep=0] (v12) {};
\draw (2,2)  node[inner sep=0] (v22) {} -- (2,3) node[inner sep=0] (v23) {};
\draw (3,3)  node[inner sep=0] (v33) {} -- (3,4) node[inner sep=0] (v34) {};
 \draw [color=blue] (v11) to [out=15,in=-150] (v23);
\filldraw (v00) circle (2.5pt);
\filldraw (v01) circle (2.5pt);
\filldraw (v11) circle (2.5pt);
\filldraw (v12) circle (2.5pt);
\filldraw (v22) circle (2.5pt);
\filldraw (v23) circle (2.5pt);
\filldraw (v33) circle (2.5pt);
\filldraw (v34) circle (2.5pt);
\draw (0,0) node[left]{$yxx+xyx$} (0,1) node[left]{$zxx+xzx$} (1,1) node[right]{$xyy+yxy$} (1,2) node[left]{$zxy+xzy+yxz+xyz$};
\draw (3,4) node[right]{$zyz+yzz$} (3,3) node[right]{$zyy+yzy$} (2,3) node[left]{$xzz+zxz$} (2,2) node[right]{$xzy+zxy+zyx+yzx$};
\end{scope}\end{tikzpicture}
\] 
\caption{The $\cA$-module structure of $\mr{H}^*( \Phi^{\Ctwo}(\cA_1^{\Ctwo}); \F  )$. }
\label{figure:gfixA1}
\end{figure}
We explicitly calculate the $\cA$-module structure of  $\mr{H}^*( \Phi^{\Ctwo}(\cA_1^{\Ctwo}); \F  )$ from the above isomorphism and record it in \autoref{figure:gfixA1}  as a subcomplex of $\mr{H}^*(\mathcal{E}; \F)^{\otimes 3}$, with the convention that $x$, $y$ and $z$ are generators in $\mr{H}^*(\mathcal{E}; \F)$ in degree $0$, $1$ and $2$ respectively. 

\begin{lem} \label{lem:splitfixA} The finite spectrum $\Phi^{\Ctwo}(\cA_1^{\Ctwo}) $  is a  type $1$ spectrum and equivalent to 
\[ \Phi^{\Ctwo}(\cA_1^{\Ctwo}) \simeq \M_2(1) \vee \Sigma \Big(\M_2(1) \sma \M_2(1)\Big) \vee \Sigma^3 \M_2(1).\] \end{lem}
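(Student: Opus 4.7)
The plan is to compute $\Phi^{\Ctwo}(\cA_1^{\Ctwo}) = \Sigma^{-1}\tilde{e}(\mathcal{E}^{\sma 3})$ by exploiting the wedge splitting $\mathcal{E} \simeq \bS^0 \vee \Sigma\M_2(1)$ recorded above. Setting $A := \bS^0$ and $B := \Sigma\M_2(1)$, the spectrum $\mathcal{E}^{\sma 3}$ decomposes as a wedge of eight smash products $X_{i_1 i_2 i_3}$ indexed by $i_j \in \{A,B\}$, and $\Sigma_3$ permutes these summands through four orbits: the singleton $\{X_{AAA}\}$, the transitive orbits through $X_{AAB}$ and through $X_{ABB}$ (each of size three), and the singleton $\{X_{BBB}\}$ on which $\Sigma_3$ acts by permuting the three copies of $\Sigma\M_2(1)$. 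Since $\tilde{e}\in\Z_{(2)}[\Sigma_3]$, it restricts to each orbit sub-wedge, so $\tilde{e}(\mathcal{E}^{\sma 3})$ splits accordingly, and I would compute the four restrictions one at a time.

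The three easier pieces can be dispatched quickly. On $X_{AAA} = \bS^0$ the action is trivial, so $\tilde{e}$ is the scalar $\tfrac{1+1-1-1}{3}=0$ and this piece vanishes. The orbit of $X_{AAB}$ is $\Sigma\M_2(1) \sma \bS^{\vee 3}$ with $\Sigma_3$ permuting the three summands of $\bS^{\vee 3}$; \autoref{eInduced} says that $\overline{e}$ is a rank-one operator on this permutation representation, so $\tilde{e}$ lifts to a rank-one idempotent in $M_3(\Z_{(2)})$, and since idempotents split in the $2$-local stable category the contribution is a single copy of $\Sigma\M_2(1)$. The orbit of $X_{ABB}$ is handled identically and contributes $\Sigma^2(\M_2(1)\sma\M_2(1))$.

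The main work is on $X_{BBB} = \Sigma^3\M_2(1)^{\sma 3}$. I would compute the cohomology $\overline{e}(V^{\otimes 3})$ with $V = \mr{H}^*(\M_2(1);\F)$ generated by $v_0$ in degree $0$ and $v_1 = \Sq^1 v_0$ in degree $1$. Reduced mod $2$, $\overline{e} = 1+(12)+(13)+(132)$ in $\F[\Sigma_3]$, and a direct check on the eight basis elements of $V^{\otimes 3}$ shows that $\overline{e}$ vanishes on the two $\Sigma_3$-fixed vectors $v_0^{\otimes 3}, v_1^{\otimes 3}$ and on each of the two orbits of size three projects onto a single class: $u := v_1\otimes v_0\otimes v_0 + v_0\otimes v_1\otimes v_0$ in degree $1$ and $w := v_0\otimes v_1\otimes v_1 + v_1\otimes v_0\otimes v_1$ in degree $2$. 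The Cartan formula gives $\Sq^1 u = w$, so $\overline{e}(V^{\otimes 3}) \cong \mr{H}^*(\Sigma\M_2(1);\F)$ as an $\cA$-module. Since a finite two-cell $2$-local spectrum is determined by its mod-$2$ cohomology (the top-cell attaching map being forced to be a unit multiple of $2$), this pins down $\tilde{e}(\M_2(1)^{\sma 3}) \simeq \Sigma\M_2(1)$, and hence the $X_{BBB}$-contribution is $\Sigma^4\M_2(1)$.

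Combining the four contributions and desuspending gives the asserted equivalence $\Phi^{\Ctwo}(\cA_1^{\Ctwo}) \simeq \M_2(1) \vee \Sigma(\M_2(1)\sma\M_2(1)) \vee \Sigma^3\M_2(1)$, and type $1$ is then immediate: each wedge summand is (a suspension of) $\M_2(1)$ or $\M_2(1)\sma\M_2(1)$, both classically of type $1$. The principal obstacle is the final step of the $X_{BBB}$ calculation, where one must promote the cohomological identification to an equivalence of spectra; this is routine for two-cell spectra but does require invoking the classification of small $2$-local finite complexes with prescribed $\cA$-module cohomology.
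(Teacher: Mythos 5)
Your argument is correct and follows the same strategy as the paper: decompose $\mathcal{E}^{\sma 3}$ into $\Sigma_3$-orbit sub-wedges, evaluate $\tilde{e}$ on each orbit, and assemble the pieces. The explicit analysis of the $X_{BBB}$ summand, including the observation that a two-cell $2$-local spectrum with the computed cohomology is forced to be $\Sigma\M_2(1)$, spells out a step the paper leaves implicit (``Hence, the result'').

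One place that deserves a closer look --- and where the paper itself is also terse --- is the claim that the orbit of $X_{ABB}$ is ``handled identically'' to that of $X_{AAB}$. The two orbits behave differently: the stabilizer $\langle(1\,2)\rangle$ of $X_{AAB}$ acts trivially on its value $\Sigma\M_2(1)$, so the orbit sub-wedge really is $\Sigma\M_2(1)\sma\bS^{\vee 3}$ with the pure permutation action, and \autoref{eInduced} plus locality of $\Z_{(2)}$ produces a rank-one idempotent in $M_3(\Z_{(2)})$. By contrast, the stabilizer $\langle(2\,3)\rangle$ of $X_{ABB}$ acts on $\Sigma^2\M_2(1)^{\sma 2}$ by the twist map $T$, which is not the identity, so the matrix of $\tilde{e}$ on that orbit has entries in $\Z_{(2)}[T]\subset\End(\Sigma^2\M_2(1)^{\sma 2})$ rather than in $\Z_{(2)}$, and \autoref{eInduced} (a statement about the trivial $\Ctwo$-representation) does not apply verbatim. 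The conclusion nonetheless holds: since only the identity of $\Sigma_3$ fixes $X_{ABB}$, the $(1,1)$-entry of $e$ is the unit $\tfrac{1}{3}\cdot\mathrm{id}$, which exhibits $\Sigma^2\M_2(1)^{\sma 2}$ as a retract of $\tilde{e}$ of the orbit sub-wedge; equality then follows by a count of cohomology dimensions, obtained by feeding the $\Ctwo$-module decomposition $\mr{H}^*(\M_2(1)^{\sma 2})\cong\F\oplus\F[\Ctwo]\oplus\F$ into \autoref{eInduced} ($1+2+1=4$). It is worth making this asymmetry between the two middle orbits explicit rather than invoking an identical treatment.
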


\begin{proof} 
From \autoref{figure:gfixA1}, it is clear that  
we have an isomorphism of $\cA$-modules
\[
\mr{H}^*(\Phi^{\Ctwo}(\cA_1^{\Ctwo}); \F) \cong \mr{H}^*\Big(\M_2(1) \vee \Sigma \big(\M_2(1) \sma \M_2(1)\big) \vee \Sigma^3 \M_2(1); \F\Big).
\]
  It is possible that the $\cA$-module $\mr{H}^*(\Phi^{\Ctwo}(\cA_1^{\Ctwo}); \F)$ may not realize  to a unique finite spectrum (up to weak equivalence). However, other  possibilities can be eliminated from the fact that $\mathcal{E}^{\sma 3}$
  splits $\Sigma_3$-equivariantly into four components: 
\[ \mathcal{E}^{\sma 3} \simeq  \bS \vee \left(\bigvee_{i = 1}^3\Sigma \M_2(1)\right) \vee \left(\bigvee_{i = 1}^3\Sigma^2 \M_2(1)^{\sma 2} \right) \vee \Sigma^3 \M_2(1)^{\sma 3} . \]
The idempotent $\tilde{e}$ annihilates $\bS \iso \bS^{\sma 3}$, and \autoref{eInduced} implies that
\[ \tilde{e} \left( \bigvee_{i=1}^3 \Sigma \M_2(1) \right) \simeq \Sigma \M_2(1) \qquad \text{and}\] 
\[ \tilde{e} \left( \bigvee_{i=1}^3 \Sigma^2 \M_2(1)\sma \M_2(1) \right) \simeq \Sigma^2 \M_2(1) \sma \M_2(1).
\]
Similarly, we see using \eqref{dimcount} that 
\[\mr{H}^*\left( \tilde{e} \left( \Sigma^3 \M_2(1)^{\sma 3} \right) \right) \iso \overline{e} \left( \mr{H}^*\left( \Sigma \M_2(1) \right)^{\otimes 3}\right) \iso \mr{H}^*(\Sigma^3 \M_2(1)).\]
Hence, the result. 
\end{proof}

\subsection{The cohomology of $\cAR_1$ is free over $\cAR(1)$}
 Next, we analyze the $\cA^\R$-module structure of $\mr{H}^{*,*}(\cA_1^\R)$. We  begin by recalling some general properties of the cohomology of motivic spectra.  

 If $X, Y \in \Sp^\R_{2,\mr{fin}}$ such that  $\mr{H}^{\ast, \ast}(X)$ is free as a left  $\mathbb{M}_2^\R$-module, then we have a Kunneth isomorphism \cite[Proposition 7.7]{DI}
 \begin{equation} \label{eqn:Kunnethiso}
 \mr{H}^{\ast, \ast}(X \sma Y ) \cong \mr{H}^{*,*}(X) \otimes_{\mathbb{M}_2^\R} \mr{H}^{*,*}(Y) 
 \end{equation}
 as the relevant Kunneth spectral sequence  collapses. Further, if $\mr{H}^{\ast, \ast}(Y)$ is free as a left $\mathbb{M}_2^\R$-module, then so is  $\mr{H}^{\ast, \ast}(X \sma Y )$. 
 The $\cA^\R$-module structure of  $\mr{H}^{\ast, \ast}(X \sma Y )$ can then be computed using the Cartan formula. 
The comultiplication map  of $\cA^\R$ is  left $\mathbb{M}_2^\R$-linear,  coassociative and cocommutative \cite[Lemma 11.9]{V}, which is also reflected in the fact that its $\mathbb{M}_2^\R$-linear dual is a commutative and associative algebra. Thus, when $\mr{H}^{\ast, \ast}(X)$ is a free  left  $\mathbb{M}_2^\R$-module, the elements of $\F[\Sigma_n]$ acts on 
 \[ \mr{H}^{\ast, \ast}(X^{\sma n}) \cong \mr{H}^{\ast, \ast}(X) \otimes_{\mathbb{M}_2^\R} \dots \otimes_{\mathbb{M}_2^\R}  \mr{H}^{\ast, \ast}(X)\] 
 via permutation and commutes with the action of $\cA^{\R}$. This also implies that $\F[\Sigma_n]$ also acts on 
 \[ \mr{H}^{\ast, \ast}(X^{\sma n})/(\rho, \tau) \cong (\mr{H}^{\ast, \ast}(X)/(\rho, \tau)) \otimes \dots \otimes \mr{H}^{\ast, \ast}(X)/(\rho, \tau)  \]
 and commutes with the action of $\cA^\R \mm \mathbb{M}_2^\R$. From the above discussion we may conclude that  
 \begin{equation} \label{eqn:identifycohomology}
  \mr{H}^{*, *}( \cA_1^\R) \cong \Sigma^{-1} \overline{e}(\mr{H}^{*, *}(\QR)^{\otimes 3} ) 
  \end{equation}
 is an isomorphism of $\cA^\R$-module. 
 
 We will also rely upon the following important property of the action of the motivic Steenrod algebra on the cohomology of a motivic space (as opposed to a motivic spectrum):

 \begin{rmk}[Instability condition for $\R$-motivic cohomology] 
 \label{Instab}
 If $X$ is an $\R$-motivic space then $\mr{H}^{*,*}(X)$ admits a ring structure, and, 
 for any $u \in  \mr{H}^{n, i}(X)$,
 the $\R$-motivic squaring operations obey the rule 
\[ 
\Sq^{2i}(u) = \left\lbrace 
\begin{array}{ccc}
0 & \text{if $n< 2i$} \\
u^2 & \text{if $n = 2i$.}
\end{array}
\right.
\]
This is often  referred to as the \emph{instability condition}. 
\end{rmk}

 To understand the $\cA^\R$-module structure of $\mr{H}^{*, *}(\QR)$, we first make the following observation regarding $\mr{H}^{*, *}(\coneR{\hsf})$ (as $\coneR{\hsf}$ is a sub-complex of $\QR$) using an argument very similar to \cite[Lemma~7.4]{LowMilnorWitt}.

 \begin{prop}\label{A(0)exnts}
 There are two extensions of $\cAR(0)$ to an $\cAR$-module, and these $\cAR$-modules are realized as the cohomology of $\coneR{\hsf}$ and $\coneR{2}$.
\end{prop}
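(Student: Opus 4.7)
The plan is to enumerate the possible $\cA^\R$-extensions of $\cA^\R(0)$ by a bidegree count, and then realize both as the cohomology of the two candidate cofibers.

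As a module over itself, $\cA^\R(0)$ is free over $\bMR$ on the two generators $1$ and $\Sq^1$, in bidegrees $(0,0)$ and $(1,0)$. Extending this to an $\cA^\R$-module amounts to specifying $\Sq^{2^n}(1)$ and $\Sq^{2^n}(\Sq^1)$ for $n \geq 1$, with each value constrained to lie in the appropriate bidegree of $\cA^\R(0)$. Recalling $|\tau| = (0,1)$ and $|\rho| = (1,1)$, a direct enumeration of monomials $\tau^i\rho^j$ and $\tau^i\rho^j\cdot \Sq^1$ shows that the only nonzero element of $\cA^\R(0)$ in bidegree $(2,1)$ is $\rho\Sq^1$, while all the candidate bidegrees for $\Sq^{2^n}(1)$ with $n\geq 2$ and for $\Sq^{2^n}(\Sq^1)$ with $n\geq 1$ contain only $0$. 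Hence the only freedom is the choice $\Sq^2(1) \in \{0,\,\rho\Sq^1\}$, so there are at most two extensions. Both choices automatically satisfy the motivic Adem relations, since any such relation applied to $1$ or $\Sq^1$ yields an equation in a bidegree where $\cA^\R(0)$ vanishes.

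To realize both extensions, one invokes the standard dictionary between attaching maps and module extensions: for a map $f\colon \bS_\R \to \bS_\R$ with trivial Hurewicz image, detected by $\sum_i \lambda_i h_i \in \Ext^{1,*,*}_{\cA^\R}(\F,\F)$ (with $\lambda_i \in \bMR$), the cofiber $\coneR{f}$ has $\rH^{*,*}(\coneR{f}) = \bMR\{a,b\}$ with $\Sq^{2^i}(a) = \lambda_i\cdot b$. For $\coneR{2}$ the attaching map is detected purely by $h_0$, giving $\Sq^1(a) = b$ and $\Sq^2(a) = 0$. For $\coneR{\hsf}$, Morel's relation $\hsf = 2 + \rho\eta$ in $\pi_{0,0}(\bS_\R)$ shows the attaching map is detected by $h_0 + \rho h_1$, yielding $\Sq^1(a) = b$ and $\Sq^2(a) = \rho\cdot b$. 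These two cohomology modules realize the two enumerated extensions, proving the proposition.

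The main obstacle will be making the attachment-to-extension dictionary precise in the $\R$-motivic setting, i.e., directly identifying the $\cA^\R$-module structure on $\rH^{*,*}(\coneR{\hsf})$ from the cofiber sequence together with Morel's identification of $\hsf$. This parallels the argument in \cite[Lemma~7.4]{LowMilnorWitt} referenced in the paper, and is analogous to the classical computation that identifies the $\cA$-module structure on $\rH^{*}(\mr{C}(\eta))$ via the nontrivial $\Sq^2$-action.
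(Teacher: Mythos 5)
Your enumeration of extensions (the bidegree count showing that the only freedom is $\Sq^2(1)\in\{0,\rho\Sq^1\}$) is correct and matches the paper's degree argument. The realization step, however, contains a genuine error: you have swapped the Adams spectral sequence detections of $2$ and $\hsf$. You assert that $2$ is detected by $h_0$ and then use Morel's relation $\hsf = 2 + \rho\eta$ to conclude that $\hsf$ is detected by $h_0 + \rho h_1$; but in fact it is $\hsf$, the zeroth $\R$-motivic Hopf map, that is detected purely by $h_0$, while $2$ is detected by $h_0 + \rho h_1$ (this is \cite[\S 8]{LowMilnorWitt}, cited in the paper). The Morel relation is consistent with either assignment in characteristic $2$, so it cannot resolve the ambiguity on its own; you imported the classical intuition that $h_0$ detects $2$, which fails $\R$-motivically. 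As a result your conclusions about which cofiber carries the nontrivial $\Sq^2$ are reversed: it is $\mr{H}^{*,*}(\coneR{2})$ that has $\Sq^2(y_{0,0}) = \rho\, y_{1,0}$, and $\mr{H}^{*,*}(\coneR{\hsf})$ that has $\Sq^2(y_{0,0}) = 0$. This is not a matter of labeling: the correct assignment is used later (e.g., in \autoref{lem:YSq4}) to distinguish the $\cA^\R$-module structures of $\Yone^\R$ and $\Ythree^\R$, so getting it backwards would propagate into errors.

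The deeper issue is methodological. Your argument pins down the $\Sq^2$-action by appealing to an attachment-to-extension dictionary that requires knowing, from an external source, what $h_0$ detects; but the fact that $h_0$ detects $\hsf$ rather than $2$ is itself essentially equivalent to the $\Sq^2$-computation you are trying to perform, so you risk circularity or, as here, importing the wrong classical fact. The paper instead resolves the $\Sq^2$-action for $\coneR{2}$ by a self-contained geometric argument: realize $2$ as an unstable map $\mr{S}^{1,0}\to\mr{S}^{1,0}$, consider the motivic space $\mr{S}^{1,1}\sma\coneR{2}_u$, and use the instability condition $\Sq^2(u) = u^2$ in degree $(2,1)$ together with Voevodsky's relation $\iota_{1,1}^2 = \rho\,\iota_{1,1}$ to deduce $\Sq^2(y_{0,0}) = \rho\, y_{1,0}$ directly. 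The Adams-filtration statement $2 \leftrightarrow h_0 + \rho h_1$ then appears as a consistency check rather than as the input. If you want to keep your route via the Adams dictionary, you must independently establish the detection of $\hsf$ or of $2$, which is precisely what the instability argument supplies.
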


 \begin{figure}[h]
\vspace{1ex}

\includegraphics[width=0.4\textwidth]{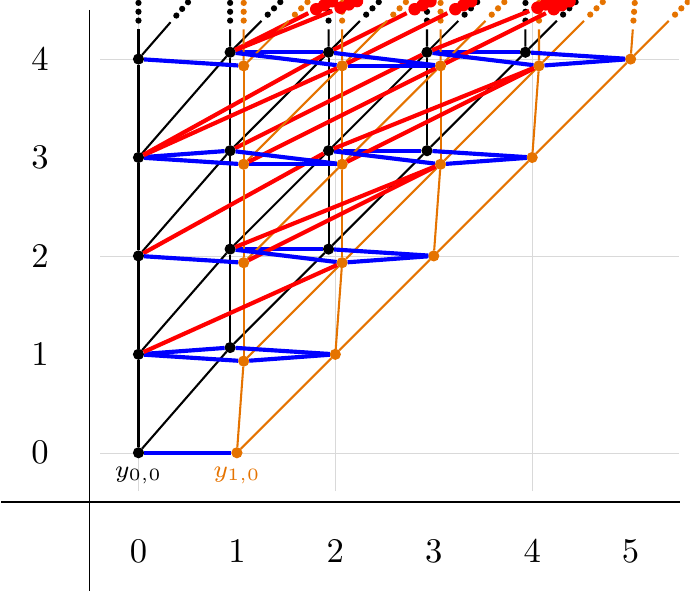} \qquad
\includegraphics[width=0.4\textwidth]{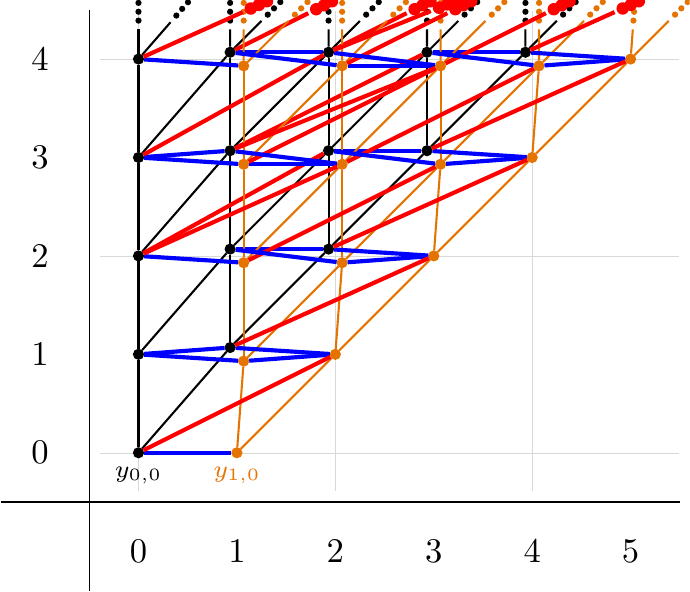} \\

$\rH^{*, *}_\R(\coneR{\hsf})$ \qquad \qquad \qquad \qquad \qquad \qquad $\rH^{*, *}_\R(\coneR{2})$
\caption{The $x$-axis represents the negative of topological dimension, $y$-axis represents the negative of motivic weight, vertical lines of length $(0,1)$ represent $\tau$-multiplication, diagonal lines of length $(1,1)$ represent $\rho$-multiplication, blue lines represent $\Sq^1$-action and red lines represent $\Sq^2$-action.}
\label{fig:Smod2andSmodh}

\end{figure}

\begin{pf}
For degree reasons, the only choice in extending $\cAR(0)$ to an $\cAR$-module is the action of $\Sq^2$ on the generator in bidegree $(0, 0)$. 
Writing $y_{0,0}$ for the generator in degree (0, 0) and $y_{1,0}$ for $\Sq^1(y_{0,0})$ in (cohomological) bidegree $(1,0)$. 
The two possible choices are 
\begin{itemize}
\item $\Sq^2(y_{0,0})=0$ and
\item $\Sq^2(y_{0,0}) = \rho\cdot y_{1,0}$.
\end{itemize}

We can realize the degree 2 map  as an unstable map $\mr{S}^{1,0} \rtarr \mr{S}^{1,0}$, and we will write $\coneR{2}_u$ for the cofiber.  We deduce information about the $\cA^\R$-module structure of $\mr{H}^{\ast, \ast}(\coneR{2})$ by   analyzing  the  cohomology ring  of $\mr{S}^{1,1} \sma \coneR{2}_u$ using the instability condition of \autoref{Instab}.
First, note that  in
\[ \mr{H}^{\ast, \ast}(\mr{S}^{1,1} ) \cong \mathbb{M}_2^\R  \cdot \iota_{1,1}\]
we have the relation $\iota_{1,1}^2 = \rho \cdot \iota_{1,1}$ \cite[Lemma~6.8]{V}. Also note that
\[ \mr{H}^{*,*}((\coneR{2}_u)_+) \cong \mathbb{M}_2^\R[x]/(x^3) \]
  where  $x$   is in cohomological degrees $(1,0)$. 
 Therefore, in 
\[ \mr{H}^{\ast, \ast}(\mr{S}^{1,1} \sma \coneR{2}_u ) = \mathbb{M}_2^\R  \cdot \iota_{1,1} \otimes_{\mathbb{M}_2^\R} \mathbb{M}_2^\R \{ x, x^2 \}   \]
the instability condition implies
\[ \Sq^2 ( \iota_{1,1} \otimes  x) = \iota_{1,1}^2 \otimes x^2 = \rho \cdot \iota_{1,1} \otimes x^2.\]
  Here the space-level cohomology class $x^2$ corresponds to the spectrum-level class $y_{1,0}$.
 Therefore, $\Sq^2(y_{0,0}) = \rho \cdot y_{1,0}$ in $\mr{H}^{*,*}(\coneR{2})$. This is also reflected in the fact that multiplication by $2$ is detected by $h_0 + \rho h_1$ in the $\R$-motivic Adams spectral sequence \cite[$\mathsection$8]{LowMilnorWitt}. 

On the other hand $\hsf$ is the `zeroth $\R$-motivic Hopf-map' detected by the element $h_0$ in the motivic Adams spectral sequence. It follows that $\Sq^2(y_{0,0}) = 0$. 
\end{pf}

 In order to express the $\cA^\R$-module structure on $\mr{H}^{\ast, \ast}(X)$ for a finite spectrum $X$, it is enough to specify the action of $\cA^\R$ on its left $\mathbb{M}_2^\R$-generators as the action of $\tau$ and $\rho$ multiples are determined by the Cartan formula. 

 \begin{eg}
  Let $ \{ y_{0,0}, y_{1,0}\} \subset \mr{H}^{*,*}( \coneR{\hsf})$ denote a left $\mathbb{M}_2^\R$-basis of $\mr{H}^{*,*}( \coneR{\hsf})$.
  The data that 
 \begin{itemize}
 \item $\Sq^1(y_{0,0}) = y_{1,0}$ 
 \item $\Sq^2(y_{0,0}) = 0$ 
 \end{itemize}
completely determines the $\cA^\R$-module structure of  $\mr{H}^{*,*}( \coneR{\hsf})$.
\end{eg}

\begin{prop} \label{lem:Qdescribe}$\mr{H}^{\ast, \ast}(\QR)$ is a free $\mathbb{M}_2^{\R}$-module  generated by $a, b$ and $c$ in cohomological bidegrees $(0,0), (1,0)$ and $(3,1)$, and the relations 
\begin{enumerate}
\item  $\Sq^1(a) = b$, 
\item $\Sq^2(b) = c$, 
\item $\Sq^4(a) = 0$.   
\end{enumerate}
completely determine the $\cA^{\R}$-module structure  of $\mr{H}^{\ast, \ast}(\QR)$. 
\end{prop}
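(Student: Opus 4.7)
The plan is to establish the additive structure of $\mr{H}^{*,*}(\QR)$ via a cell filtration, then verify each of the three Steenrod-square relations, with the vanishing of $\Sq^4(a)$ forming the main obstacle.

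Since $\hsf \cdot \eta_{1,1} = 0$ in $\pi_{*,*}(\bS_\R)$, the top cell of $\QR$ attaches via a lift $\al : \Sigma^{2,1}\bS_\R \to \coneR{\hsf}$ of $\eta_{1,1}$, giving a cofiber sequence
\[
\Sigma^{2,1}\bS_\R \xrtar{\al} \coneR{\hsf} \rtarr \QR \rtarr \Sigma^{3,1}\bS_\R.
\]
Because $\mr{H}^{n,w}(\Sigma^{2,1}\bS_\R) = 0$ for $n < 2$, while $\mr{H}^{*,*}(\coneR{\hsf})$ is generated in bidegrees $(0,0)$ and $(1,0)$, the map $\al^*$ vanishes and the resulting long exact sequence collapses into a short exact sequence of free $\mathbb{M}_2^\R$-modules. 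This exhibits $\mr{H}^{*,*}(\QR)$ as the free $\mathbb{M}_2^\R$-module on generators $a, b, c$ in the claimed bidegrees, with $a, b$ pulled back from $\coneR{\hsf}$ and $c$ coming from the top cell.

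With this additive structure in hand, naturality of Steenrod operations along the inclusion $\coneR{\hsf} \hookrightarrow \QR$ together with \autoref{A(0)exnts} yields $\Sq^1(a) = b$ and $\Sq^2(a) = 0$. For $\Sq^2(b) = c$, I would pass to the quotient $\QR/\bS_\R \simeq \Sigma^{1,0}\mr{C}(\eta_{1,1})$ and analyze an unstable model $\mr{C}(\eta_{1,1})_u$ built from $(\eta_{1,1})_u : \mr{S}^{3,2} \to \mr{S}^{2,1}$: the bottom class $\tilde{x} \in \mr{H}^{2,1}$ satisfies $\Sq^2(\tilde{x}) = \tilde{x}^2$ by the instability condition of \autoref{Instab} (with $n = 2i = 2$, $i=1$), and $\tilde{x}^2$ is the top class. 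Stabilizing gives $\Sq^2(b) = c$.

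The main obstacle is relation (3). A degree--weight count shows that $\Sq^4(a) = \varepsilon \, \rho c$ for some $\varepsilon \in \F$, since $\rho c$ is the only class in bidegree $(4,2)$ in the free $\mathbb{M}_2^\R$-module. To prove $\varepsilon = 0$, I would realize $\QR$ (after any necessary suspension) as $\Sigma^{-3,-2}\Sigma^\infty \QR_u$ for an unstable motivic space $\QR_u$ with cells in bidegrees $(3,2), (4,2), (6,3)$, built by attaching along $\hsf_u$ and an unstable lift of $(\eta_{1,1})_u$. The class $\tilde{a} \in \mr{H}^{3,2}(\QR_u)$ corresponding to $a$ then satisfies $n = 3 < 4 = 2i$ with $i = 2$, so the instability condition of \autoref{Instab} forces $\Sq^4(\tilde{a}) = 0$; stability of the Steenrod operations then yields $\Sq^4(a) = 0$. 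Finally, that relations (1)--(3) determine the entire $\cA^\R$-module structure follows by inspecting each remaining Steenrod square on $a, b, c$: each either has no free-module target in its image bidegree and thus vanishes, or is determined from the established relations by Adem relations, for instance $\Sq^3(a) = \Sq^1\Sq^2(a) = 0$.
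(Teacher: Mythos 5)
Your proof is correct and follows essentially the same route as the paper: both derive the free $\mathbb{M}_2^\R$-module structure from the vanishing of the attaching maps in cohomology, both obtain $\Sq^1(a)=b$ from the inclusion $\coneR{\hsf}\hookrightarrow\QR$ and $\Sq^2(b)=c$ from the quotient to $\Sigma^{1,0}\coneR{\eta_{1,1}}$, and both kill $\Sq^4(a)$ by passing to an unstable model $\QR^u$ with bottom cell in bidegree $(3,2)$ and invoking the instability condition. You fill in a few details the paper leaves implicit (the degree count isolating $\rho c$ as the only possible target of $\Sq^4(a)$, and the instability computation $\Sq^2(\tilde{x})=\tilde{x}^2$ for $\coneR{\eta_{1,1}}_u$), while the paper in turn makes explicit the unstable nullhomotopy $\hsf^u\circ\Sigma^{1,1}\eta_{1,1}^u\simeq *$ needed to build $\QR^u$, which you gloss over.
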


 \begin{proof}
 $\mr{H}^{\ast, \ast}(\QR)$ is a free $\mathbb{M}_2^{\R}$-module because the attaching maps of  $\QR$ induce trivial maps in $\mr{H}^{*,*}(-)$. 
  The first two relations can be deduced from the obvious maps 
 \begin{enumerate}
 \item  $\coneR{\hsf} \to \QR$ 
 \item $ \QR \to \Sigma^{1,0}\,\coneR{\eta_{1,1}}$
 \end{enumerate}
 which are respectively surjective and injective in cohomology.
 
  Let $\hsf^u: \mr{S}^{3,2} \to  \mr{S}^{3,2} $ and $\eta_{1,1}^u: \mr{S}^{3,2} \to \mr{S}^{2,1}$ denote the unstable maps that stabilize to $\hsf$ and $\eta_{1,1}$, respectively. 
 The unstable $\R$-motivic space $\QR^u$ (which stabilizes to $\QR$)  can be constructed  using the fact that the composite of the unstable maps 
 \[
 \begin{tikzcd}
  \mr{S}^{4,3} \rar["\Sigma^{1,1}\eta_{1,1}^u"] & \mr{S}^{3,2} \rar[" \hsf^u"] & \mr{S}^{3,2}
  \end{tikzcd}
  \]

  is null. Thus $\mr{H}^{*, *}(\QR^u )$ consists of three generators $a_{u}$, $b_u$ and $c_u$ in bidegrees $(3,2)$, $(4,2)$ and $(6,3)$. It follows from the instability condition that $\Sq^4(a_u) = 0$. 
 \end{proof}
  \begin{proof}[Proof of \autoref{thm:A1R}] From \autoref{rem:A00} 
  and \autoref{lem:splitfixA}, we deduce that $\cA_1^\R$ is a type $(2,1)$ complex.  To  show that  the bi-graded $\R$-motivic cohomology of $\cA_1^\R$ is free as an $\cA^\R(1)$, we make use of \autoref{cor:freeR}. 
  
   Since  $\mr{H}^{*, *}( \cA_1^\R)$ is a summand of a free $\mathbb{M}_2^\R$-module, it is projective as an $\mathbb{M}_2^\R$-module. In fact, $\mr{H}^{*, *}( \cA_1^\R)$ is free, as projective modules over (graded) local rings are free. Also note that the 
   elements
    \[ \overline{\mr{P}}^0_1, \overline{\mr{P}}^1_1, \overline{\mr{P}}^0_2 \in  \cA^{\R}(1)/ (\rho, \tau) \cong \Lambda( \overline{\mr{P}}^0_1, \overline{\mr{P}}^1_1, \overline{\mr{P}}^0_2) \]
    are primitive. Hence we have a Kunneth isomorphism in the respective Margolis homologies, in particular we have, 
     \[ \mathcal{M}( \mr{H}^{*, *}(\cA^\R_1)/(\rho, \tau), \overline{\mr{P}}^s_t) = \overline{e}( \mathcal{M}( \mr{H}^{*, *}(\QR)/(\rho, \tau), \overline{\mr{P}}^s_t )^{\otimes 3} ) \]
for $(s,t) \in \{ (0,1), (1,1), (0,2) \}$. Since $ \dim_{\F} \mathcal{M}(\mr{H}^{*, *}( \QR)/(\rho, \tau), \overline{\mr{P}}^s_t ) = 1$, by  \linebreak \autoref{lem:vanish} 
\[ \mathcal{M}( \cA^\R_1/(\rho, \tau), \overline{\mr{P}}^s_t) =0 \] 
for $(s,t) \in \{ (0,1), (1,1), (0,2) \}$. Thus, by \autoref{cor:freeR} we conclude that $\mr{H}^{*, *}( \cA_1^\R)$ is a free $\cA^\R(1)$-module. A direct computation shows that 
\[ \dim_{\F} \mr{H}^{*, *}(\cA^\R_1)/(\rho, \tau) = 8,\] hence $\mr{H}^{*, *}( \cA_1^\R)$ is $\cA^\R(1)$-free
of rank one.
  \end{proof}
  
  \subsection{The $\cAR$-module structure}\label{cARmod-subsec}
 Using  the description \eqref{eqn:identifycohomology} and Cartan formula we make a complete calculation of the  $\cA^\R$-module structure of $\mr{H}^{*,*}(\cA_1^\R)$. Let $a, b, c \in \mr{H}^{*,*}(\QR)$ as in \autoref{lem:Qdescribe}. In \autoref{fig:motivicA1} we provide a pictorial representation with the names of the generators that are in the image of the idempotent $\overline{e}$. 
 For convenience we relabel 
 the generators in \autoref{fig:motivicA1}, where the indexing on a new label records the cohomological bidegrees of the corresponding generator. The following result is straightforward, and we leave it to the reader to verify. 
 
 \begin{lem} \label{lem:AA_1} In $\mr{H}^{\ast, \ast}(\cA_1^\R)$, the underlying $\cA^\R(1)$-module structure, along with the  relations 
 \begin{enumerate}
 \item $\Sq^4 (v_{0,0}) = \tau \cdot w_{4,1}$, 
 \item $\Sq^4 (v_{1,0}) = w_{5,2}$, 
 \item $ \Sq^4 (v_{2,1}) = 0$,
 \item $ \Sq^4( v_{3,1}) = 0 =  \Sq^4( w_{3,1}) $,
 \item $ \Sq^8 (v_{ 0, 0}) = 0$,
  \end{enumerate}
 completely determine the $\cA^\R$-module structure.
 \end{lem}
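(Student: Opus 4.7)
The plan is a direct verification via the explicit tensor-product description
\[
\mr{H}^{*,*}(\cA_1^\R) \cong \Sigma^{-1,0}\overline{e}\big(\mr{H}^{*,*}(\QR)^{\otimes 3}\big)
\]
from \eqref{eqn:identifycohomology}, followed by a bidegree-counting argument for the determinacy claim. First, I would read off each generator $v_{i,j}$ and $w_{i,j}$ as a specific symmetrized tensor in $a, b, c \in \mr{H}^{*,*}(\QR)$, following \autoref{fig:motivicA1}; for instance, $v_{0,0}$ is represented by $b \otimes a \otimes a + a \otimes b \otimes a$ after the $(1,0)$-desuspension. Since $\mr{H}^{*,*}(\QR)$ is free as a left $\bMR$-module (\autoref{lem:Qdescribe}), the Kunneth isomorphism \eqref{eqn:Kunnethiso} identifies $\mr{H}^{*,*}(\QR^{\sma 3})$ with $\mr{H}^{*,*}(\QR)^{\otimes 3}$, and the $\cA^\R$-action is determined by the motivic Cartan formula, which differs from the classical one only by $\tau$-correction terms arising from the non-cocommutative diagonal on $\bMR$.

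Next, I would tabulate the Steenrod action on $\{a, b, c\} \subset \mr{H}^{*,*}(\QR)$. Combining \autoref{lem:Qdescribe} with a bidegree vanishing check, the only non-trivial operations are $\Sq^1(a) = b$ and $\Sq^2(b) = c$; all $\Sq^n$ with $n \geq 3$ vanish on $\{a, b, c\}$. With this short list in hand, the iterated Cartan expansion of $\Sq^4$ on any triple tensor produces very few surviving summands. For relation (1), the single surviving ``main'' summand in $\Sq^4(b \otimes a \otimes a + a \otimes b \otimes a)$ comes from an $(\Sq^2, \Sq^1, \Sq^1)$-partition and contributes a class of bidegree $(5,1)$; since the target bidegree of $\Sq^4(v_{0,0})$ is $(4,2)$, bidegree matching forces exactly one $\tau$-correction, yielding $\tau w_{4,1}$. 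Relations (2)--(4) are analogous, with (3) and (4) following either because the Cartan expansion vanishes term by term or because the target bidegree contains no non-zero class. For relation (5), the Cartan expansion of $\Sq^8$ on the same tensors requires indices $i_1 + i_2 + i_3 = 8$ with $\Sq^{i_1}$ non-zero on $b$ (so $i_1 \leq 2$) and $\Sq^{i_2}, \Sq^{i_3}$ non-zero on $a$ (so $i_2, i_3 \leq 1$), which is impossible; the $\tau$-correction terms obey the same index bound and vanish as well.

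For the determinacy claim, every $\Sq^n \in \cA^\R$ decomposes via Adem relations into a polynomial in the $\Sq^{2^k}$. The $\cA^\R(1)$-structure accounts for $\Sq^1$ and $\Sq^2$. On a generator in bidegree $(p_0, q_0)$, the class $\Sq^{2^k}(-)$ must lie in bidegree $(p_0 + 2^k, q_0 + 2^{k-1})$ and equal an $\bMR$-multiple of some generator at $(p_1, q_1)$; a monomial $\tau^a \rho^b \in \bMR$ with $a, b \geq 0$ forces $p_1 \geq p_0 + 2^k$. Since all eight generators of $\mr{H}^{*,*}(\cA_1^\R)$ have first coordinate at most $6$, this fails for $k \geq 3$ on every generator, and for $k = 2$ on each of $w_{4,1}, w_{5,2}, w_{6,2}$. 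The relation $\Sq^8(v_{0,0}) = 0$ rules out the sole remaining potentially non-zero target, namely $\rho^2 w_{6,2}$. Together, these observations recover the full $\cA^\R$-action from the listed relations and the $\cA^\R(1)$-module structure.

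The hard part will be tracking the $\tau$-corrections when iterating the motivic Cartan formula across a triple tensor. Once the bidegrees of the main Cartan summands are computed, however, the corrections are uniquely determined by weight matching, so the verifications reduce to finite bookkeeping over a short list of non-zero tensor summands.
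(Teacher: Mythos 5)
Your approach matches what the paper intends: the authors explicitly state that the lemma "is straightforward, and we leave it to the reader to verify," gesturing at precisely the combination of \eqref{eqn:identifycohomology}, the Cartan formula, and bidegree bookkeeping that you carry out. The Cartan verifications of relations (1)--(5) are sound, and you correctly identify that the determinacy reduces to the action of $\Sq^{2^k}$ for $k \geq 2$ on the eight $\bMR$-generators.

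One small wrinkle in the determinacy argument: the inequality as written, $p_1 \geq p_0 + 2^k$, has the wrong direction. From $\Sq^{2^k}(g_0) = \tau^a \rho^b g_1$ with $g_0$ in bidegree $(p_0,q_0)$, $g_1$ in $(p_1,q_1)$, and $\tau^a\rho^b$ in $(b, a+b)$, one gets $p_1 = p_0 + 2^k - b \leq p_0 + 2^k$ together with the weight constraint $q_1 = q_0 + 2^{k-1} - a - b$. It is really the \emph{pair} of constraints — first coordinate $p_1 \leq 6$ and weight $q_1 \geq 0$ simultaneously — that kills most of the targets; the first-coordinate bound alone does not. In particular, the sentence "this fails for $k\geq 3$ on every generator" overstates: for $k=3$ on $v_{0,0}$ the target $\rho^2 w_{6,2}$ in bidegree $(8,4)$ is degree-consistent (and this is exactly what relation (5) excludes), so $v_{0,0}$ must be flagged as the exception rather than folded into the general claim. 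You do catch this case in the next sentence, so the conclusion is right, but the intermediate phrasing should be tightened to avoid the appearance of a circularity (claiming degree-vanishing for $k\geq 3$ and then invoking relation (5) for $k=3$). A cleaner enumeration — listing for each generator which $\Sq^{2^k}$ have a degree-compatible nonzero $\bMR$-multiple of another generator, finding that only $\Sq^4$ on $v_{0,0},\dots,w_{3,1}$ and $\Sq^8$ on $v_{0,0}$ occur, and noting these are exactly relations (1)--(5) — would make the determinacy argument airtight.

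Aside from this wording/sign slip, your proof is correct and is the argument the paper has in mind.
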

 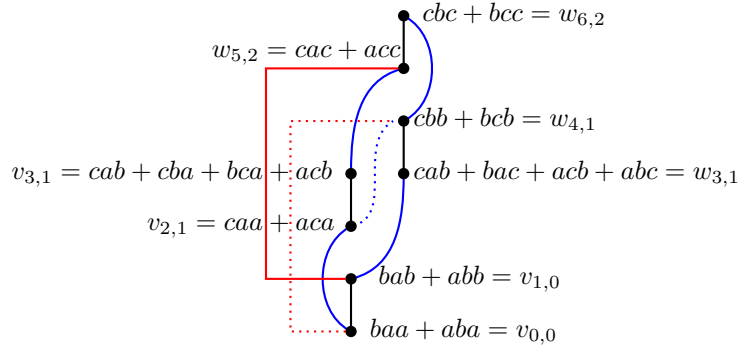
\begin{figure}[h]
 \[
\begin{tikzpicture}\begin{scope}[thick, every node/.style={sloped,allow upside down}, scale=0.7]
\draw (0,0)  node[inner sep=0] (v00) {} -- (0,1) node[inner sep=0] (v01) {};
\draw (0,2)  node[inner sep=0] (v11) {} -- (0,3) node[inner sep=0] (v12) {};
\draw (1,3)  node[inner sep=0] (v22) {} -- (1,4) node[inner sep=0] (v23) {};
\draw (1,5)  node[inner sep=0] (v33) {} -- (1,6) node[inner sep=0] (v34) {};
 \draw [color=blue] (v00) to [out=150,in=-150] (v11);
 \draw [color=blue] (v01) to [out=15,in=-90] (v22);
 \draw [color=blue] (v12) to [out=90,in=-165] (v33);
 \draw [color=blue] (v23) to [out=30,in=-30] (v34);
 \draw [dotted][color=blue] (v11) to [out=15,in=-150] (.8, 4);
\filldraw (v00) circle (2.5pt);
\filldraw (v01) circle (2.5pt);
\filldraw (v11) circle (2.5pt);
\filldraw (v12) circle (2.5pt);
\filldraw (v22) circle (2.5pt);
\filldraw (v23) circle (2.5pt);
\filldraw (v33) circle (2.5pt);
\filldraw (v34) circle (2.5pt);
\draw (0,0) node[right]{$\ baa+aba = v_{0,0}$} (0,1) node[right]{$\ \ bab+abb=v_{1,0}$} (0,2) node[left,xshift={1pt}]{$v_{2,1} = caa+aca \ $ } (0,3) node[left]{$v_{3,1} = cab+cba+bca+acb$ \ };
\draw (1,6) node[right]{$\ cbc+bcc=w_{6,2}$} (1.3,5.3) node[left]{$w_{5,2} = cac+acc \ $} (1,4) node[right]{$cbb+bcb = w_{4,1} $} (1,3) node[right]{$cab+bac+acb+abc = w_{3,1}$};
\draw[color = red] (-.1,1) -- (-1.615, 1 )-- (-1.615, 5) -- (.9, 5);
\draw[dotted][color = red] (-.1,0) -- (-1.15, 0 )-- (-1.15, 4) -- (.8, 4);
\end{scope}\end{tikzpicture}
\]
\caption{ We depict the $\cA^\R$-module structure of $\mr{H}^{*,*}(\cA_1)$. The black, blue, and red lines represent the action of  motivic $\Sq^1$, $\Sq^2$, and $\Sq^4$, respectively. Black dots represent $\mathbb{M}_2^\R$-generators,  and a dotted line represents that the action hits the $\tau$-multiple of the given $\mathbb{M}_2^\R$-generator. }
\label{fig:motivicA1}
 \end{figure}

\begin{rmk} \label{rem:A128} In upcoming work, we show that $\cA^\R(1)$ admits $128$ different $\cA^\R$-module structures.  Whether all of the $128$ $\cA^\R$-module structures can be realized by  $\R$-motivic spectra, or not, is currently under investigation.
\end{rmk}
\section{An $\R$-motivic $v_1$\slfmp} \label{selfmap}

With the construction of $\cA_1^\R$, we hope that  any one of $\Y_{(i,j)}^\R$ fits into an exact triangle  
\begin{equation} \label{eqn:exact}
\begin{tikzcd}
\Sigma^{2,1} \mathcal{Y}^\R_{(i,j)} \rar["v"] & \mathcal{Y}^\R_{(i,j)} \rar[] & \cA_1^\R \rar & \Sigma^{3,1} \mathcal{Y}^\R_{(i,j)} \rar["\Sigma v"] & \dots 
\end{tikzcd}
\end{equation}
in $\ho(\Sp_{2, \fin}^{\R} )$. The motivic weights prohibit $\cAR_1$ from being the cofiber of a self-map on $\Ytriv$ or $\Ytwo$. We will also see  that the spectrum $\Yone^\R$ cannot be a part of  \eqref{eqn:exact} because of its $\cA^\R$-module structure (see \autoref{lem:YSq4}).
 If $\Y_{(i,j)} = \Ythree^\R$ in \eqref{eqn:exact}, then the map $v$ will necessarily be  a $v_{1, \mr{nil}}$\slfmp\ because $\Ythree^\R$  is  of type $(1,1)$ and  $\cA_1^\R$ is of type $(2,1)$.  The main purpose of this section is to prove \autoref{main1} and \autoref{thm:cofibv1} by showing  that  $\Ythree^\R$ does fit into an exact triangle very similar to  \eqref{eqn:exact}
 \[ 
 \begin{tikzcd}
\Sigma^{2,1} \mathcal{Y}^\R_{(i,j)} \rar["v"] & \mathcal{Y}^\R_{(i,j)} \rar[] & \coneR{v} \rar & \Sigma^{3,1} \mathcal{Y}^\R_{(i,j)} \rar["\Sigma v"] & \dots 
\end{tikzcd}
 \]
 where $\coneR{v}$  is of type $(2,1)$  and  $\mr{H}^{*,*}(\coneR{v}) \cong \mr{H}^{*,*}(\cA_1^\R)$ as $\cA^\R$-modules. 
 
 \begin{rmk}
  The fact that $\mr{H}^{*,*}(\coneR{v})$ is isomorphic to $\mr{H}^{*,*}(\cA_1^\R)$ as  $\cA^\R$-modules does not 
 imply that $\coneR{v}$ and $\cA_1^\R$ are equivalent as $\R$-motivic spectra.
  There are a plethora of examples of Steenrod modules that are realized by spectra of different homotopy types.  
  \end{rmk} 
   
 We begin  by discussing the $\cA^\R$-module structures of $\mr{H}^{*,*}( \Ythree^\R)$. Using Adem relations, one can show that the element 
\[ Q_1 := \Sq^1 \Sq^2 + \Sq^2 \Sq^1 \in \cA^\R(1) \] 
squares to zero. Let $\Lambda(Q_1)$ denote the exterior subalgebra $\mathbb{M}_2^\R[Q_1]/(Q_1^2)$ of $\cA^\R(1)$. Let $\mathcal{B}^\R(1)$ denote the $\cA^\R(1)$-module 
\[ 
\mathcal{B}^\R(1) := \cA^\R(1) \otimes_{\Lambda(Q_1)} \mathbb{M}_2^\R. 
\]
Both $\Yone^\R$ and $\Ythree^\R$ are realizations of $\mathcal{B}^\R(1)$. In other words: 

\begin{prop} There is an isomorphism of $\cA^\R(1)$-modules 
\[  \mr{H}^{*,*}(\cY_{(i, j)}^\R) \cong \mathcal{B}^\R(1) \]
for $(i,j) \in \{ (2,1), (\hsf,1)\}$. 
\end{prop}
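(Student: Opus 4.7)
The plan is to compute $\mr{H}^{*,*}(\cY_{(i,j)}^\R)$ directly via the Künneth formula \eqref{eqn:Kunnethiso} and the Cartan formula, and then exhibit an $\cA^\R(1)$-module map out of $\mathcal{B}^\R(1)$ that is forced to be an isomorphism by a rank match.

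First I would identify $\mr{H}^{*,*}(\coneR{\eta_{1,1}})$ as a free $\mathbb{M}_2^\R$-module on classes $x_{0,0}$ and $x_{2,1}$ with the sole nontrivial Steenrod relation $\Sq^2(x_{0,0}) = x_{2,1}$; all other actions on these two generators vanish for degree reasons, since $\coneR{\eta_{1,1}}$ has cells only in bidegrees $(0,0)$ and $(2,1)$. The relation $\Sq^2(x_{0,0}) = x_{2,1}$ holds because $\eta_{1,1}$ is detected by $h_1$ (the dual of $\xi_1$) in the $\R$-motivic Adams spectral sequence; alternatively one can derive it from the instability-condition argument of \autoref{A(0)exnts} applied to the unstable model of $\eta_{1,1}$ from \autoref{egCtwo2}. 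Combining this with \autoref{A(0)exnts}, the Künneth isomorphism then presents $\mr{H}^{*,*}(\cY^\R_{(i,j)})$ as a free $\mathbb{M}_2^\R$-module on four generators $g_{0,0}, g_{1,0}, g_{2,1}, g_{3,1}$ in the indicated bidegrees (obtained as products of the $y$- and $x$-generators), with the full $\cA^\R(1)$-action determined by the Cartan formula.

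The crucial verification is that $Q_1 \cdot g_{0,0} = 0$ in both cases. For $\Ythree^\R = \coneR{\hsf} \sma \coneR{\eta_{1,1}}$ this is immediate since $\Sq^2 y_{0,0} = 0$ in $\coneR{\hsf}$: Cartan gives $\Sq^1\Sq^2 g_{0,0} = g_{3,1} = \Sq^2\Sq^1 g_{0,0}$, so their sum vanishes. For $\Yone^\R = \coneR{2} \sma \coneR{\eta_{1,1}}$, the relation $\Sq^2 y_{0,0} = \rho\, y_{1,0}$ from \autoref{A(0)exnts} introduces an extra summand $\rho\, y_{1,0} \otimes x_{0,0}$ in $\Sq^2 g_{0,0}$, but applying $\Sq^1$ to this summand yields zero because $\Sq^1(\rho) = (\Sq^1)^2(\tau) = 0$ and $\Sq^1(y_{1,0}) = 0$ (no cell in bidegree $(2,0)$). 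So once more $Q_1 \cdot g_{0,0} = 0$. The universal property of $\mathcal{B}^\R(1) = \cA^\R(1) \otimes_{\Lambda(Q_1)} \mathbb{M}_2^\R$ then produces a well-defined $\cA^\R(1)$-module map $\mathcal{B}^\R(1) \to \mr{H}^{*,*}(\cY^\R_{(i,j)})$ sending the canonical generator to $g_{0,0}$, and this map is surjective since the images of $1, \Sq^1, \Sq^2, \Sq^2\Sq^1$ visibly realize all four $\mathbb{M}_2^\R$-generators of the target. Because both source and target are free $\mathbb{M}_2^\R$-modules of rank four with generators in matching bidegrees, the surjection must be an isomorphism. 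The main subtlety in the plan is the $\rho$-bookkeeping for $\Yone^\R$ just described, which ultimately causes no obstruction.
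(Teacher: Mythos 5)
Your argument is correct and takes essentially the same route as the paper: exhibit a cyclic $\cA^\R(1)$-module surjection onto $\mr{H}^{*,*}(\cY_{(i,j)}^\R)$, observe it factors through $\mathcal{B}^\R(1)$ because $Q_1$ kills the generator, and finish by comparing $\mathbb{M}_2^\R$-ranks. The only difference is that you spell out the Cartan-formula verification that $Q_1 g_{0,0}=0$ (including the $\rho$-bookkeeping for $\Yone^\R$), which the paper's proof leaves implicit under the phrase ``$Q_1$ acts trivially\ldots\ and a dimension counting argument.''
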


\begin{proof} 
Note that $\mr{H}^{*,*}(\mathcal{Y}_{(i,j )}^\R)$ is  cyclic as an $\cA^\R(1)$-module for $(i,j) \in \{ (2,1), (\hsf,1)\}$. Thus we have an $\cA^\R(1)$-module map 
\begin{equation} \label{fi}
 f_i: \cA^\R(1) \to \mr{H}^{*,*}(\mathcal{Y}_{(i, j)}^\R).
 \end{equation}
The result follows from the fact that $Q_1$ acts trivially on $\mr{H}^{*,*}(\Y_{(i, j)}^\R)$ and a dimension counting argument. 
\end{proof}
\begin{rmk} \label{rem:mapfi} Let  $\{ y_{0,0}, y_{1,0} \}$  be the $\mathbb{M}_2^\R$-basis of  $\mr{H}^{*,*}( \coneR{\hsf})$ or $\mr{H}^{*,*}( \coneR{2})$, so that $\Sq^1(y_{0,0}) = y_{1,0}$, and let $\{ x_{0,0}, x_{2,1} \}$ a basis of $\coneR{\eta_{1,1}}$, so that $\Sq^2(x_{0,0}) = x_{2,1}$.
If we consider the $\mathbb{M}_2^\R$-basis $\{ v_{0,0}, v_{1,0}, v_{2,1}, v_{3,1}, w_{3,1}, w_{3,2}, w_{4,2}, w_{5,3}, w_{6,3} \}$ of $\cA^\R(1)$ from \autoref{cARmod-subsec}, then the maps $f_i$ of \eqref{fi} are given as in \autoref{tbl:fimaps}.
\begin{table}[ht]
\captionof{table}{The maps $f_2$ and $f_\hsf$}
\label{tbl:fimaps}
\begin{center}
\begin{tabular}{LLL} 
\hline
x & f_2(x) & f_\hsf(x)   \\ \hline  
v_{0,0} & y_{0,0}x_{0,0} & y_{0,0}x_{0,0} \\
v_{1,0}  & y_{1,0} x_{0,0} & y_{1,0} x_{0,0} \\
v_{2,1} &  y_{0,0} x_{2,0} + \rho \cdot y_{1,0} x_{0,0} &  y_{0,0} x_{2,0} \\
v_{3,1} & y_{1,0} x_{2,0} & y_{1,0} x_{2,0} \\
w_{3,1} & y_{1,0} x_{2,0} & y_{1,0} x_{2,0} \\
w_{4,2} & 0 & 0 \\
w_{5,3} & 0 & 0 \\
w_{6,3} & 0 & 0 \\
\hline
\end{tabular}
\end{center}
\end{table}

\end{rmk}

\begin{lem} \label{lem:YSq4}The $\cA^\R$-module structures on $\mr{H}^{*,*}(\Yone^\R)$ and $\mr{H}^{*,*}(\Ythree^\R)$ are 
given as in \autoref{fig:YoneYthree}.
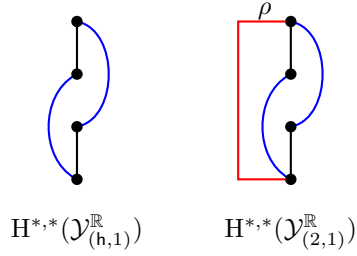
\begin{figure}[h]
\begin{tikzpicture}\begin{scope}[thick, every node/.style={sloped,allow upside down}, scale=0.7]
\draw (0,0)  node[inner sep=0] (v00) {} -- (0,1) node[inner sep=0] (v01) {};
\draw (0,2)  node[inner sep=0] (v11) {} -- (0,3) node[inner sep=0] (v12) {};
 \draw [color=blue] (v00) to [out=150,in=-150] (v11);
 \draw [color=blue] (v01) to [out=15,in=-15] (v12);
\filldraw (v00) circle (2.5pt);
\filldraw (v01) circle (2.5pt);
\filldraw (v11) circle (2.5pt);
\filldraw (v12) circle (2.5pt);
\node at (0,-1) {$\mr{H}^{*,*}(\Ythree^\R)$};
\end{scope}\end{tikzpicture}
\qquad
\begin{tikzpicture}\begin{scope}[thick, every node/.style={sloped,allow upside down}, scale=0.7]
\draw (0,0)  node[inner sep=0] (v00) {} -- (0,1) node[inner sep=0] (v01) {};
\draw (0,2)  node[inner sep=0] (v11) {} -- (0,3) node[inner sep=0] (v12) {};
 \draw [color=blue] (v00) to [out=150,in=-150] (v11);
 \draw [color=blue] (v01) to [out=15,in=-15] (v12);
\filldraw (v00) circle (2.5pt);
\filldraw (v01) circle (2.5pt);
\filldraw (v11) circle (2.5pt);
\filldraw (v12) circle (2.5pt);
\draw[][color = red] (-.1,0) -- (-1, 0 )-- (-1, 3) -- (-.1, 3);
\node at (-0.5,3.2) {$\rho$};
\node at (0,-1) {$\mr{H}^{*,*}(\Yone^\R)$};
\end{scope}\end{tikzpicture}
\caption{ Black, blue, and red lines represent the action of   $\Sq^1$, $\Sq^2$, and $\Sq^4$, respectively. Black dots represent $\mathbb{M}_2^\R$-generators,  and in the case of $\Yone^\R$, $\Sq^4$ on the bottom cell is $\rho$ times the top cell. }
\label{fig:YoneYthree}
 \end{figure}
\end{lem}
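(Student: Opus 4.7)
The plan is to exploit the Kunneth isomorphism \eqref{eqn:Kunnethiso} together with the Cartan formula. Each of $\coneR{\hsf}$, $\coneR{2}$, and $\coneR{\eta_{1,1}}$ is a two-cell $\R$-motivic complex whose cohomology is a free $\mathbb{M}_2^\R$-module on two generators, so the Kunneth theorem identifies $\mr{H}^{*,*}(\Y_{(i,j)}^\R)$ with the appropriate tensor product over $\mathbb{M}_2^\R$. This yields the $\mathbb{M}_2^\R$-module structure displayed in \autoref{fig:YoneYthree}.

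I first pin down the $\cA^\R$-module structure on $\mr{H}^{*,*}(\coneR{\eta_{1,1}})$ by an argument parallel to \autoref{A(0)exnts}: writing $\{x_{0,0}, x_{2,1}\}$ for the $\mathbb{M}_2^\R$-basis, the only nontrivial Steenrod square on $x_{0,0}$ is $\Sq^2(x_{0,0})=x_{2,1}$, reflecting the fact that $\eta_{1,1}$ is detected by $h_1$ in the $\R$-motivic Adams spectral sequence. All other operations on the two generators of $\mr{H}^{*,*}(\coneR{\eta_{1,1}})$ vanish by a bidegree count, since this cohomology is concentrated in bidegrees $(0,0)$ and $(2,1)$.

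I then compute each $\Sq^n(y_{p,q}\otimes x_{r,s})$ via Cartan. For $\Sq^1$ and $\Sq^2$ every term vanishes for bidegree reasons except those producing the black and blue lines of \autoref{fig:YoneYthree}; since $\Sq^1$ acts identically on $\mr{H}^{*,*}(\coneR{2})$ and $\mr{H}^{*,*}(\coneR{\hsf})$, the patterns of $\Sq^1$ and $\Sq^2$ coincide in the two cases. The only place where the two module structures can differ is in $\Sq^4$ of the bottom class. By Cartan,
\[
\Sq^4(y_{0,0}\otimes x_{0,0}) \;=\; \sum_{i=0}^{4} \Sq^{i}(y_{0,0})\otimes \Sq^{4-i}(x_{0,0}),
\]
and every summand vanishes on bidegree grounds except $\Sq^2(y_{0,0})\otimes\Sq^2(x_{0,0})$. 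In the $\Ythree^\R$ case, $\Sq^2(y_{0,0})=0$ by \autoref{A(0)exnts}, so $\Sq^4$ vanishes on the bottom cell, producing the left diagram. In the $\Yone^\R$ case, $\Sq^2(y_{0,0})=\rho\cdot y_{1,0}$, so
\[
\Sq^4(y_{0,0}\otimes x_{0,0}) \;=\; \rho\cdot(y_{1,0}\otimes x_{2,1}),
\]
which is $\rho$ times the top cell, giving the right diagram.

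The only real work is the bookkeeping of vanishing Cartan cross-terms; since each tensor factor's cohomology is supported in just two bidegrees, and the total cohomology in each $\Y_{(i,j)}^\R$ in only four, the bidegree arguments are immediate. I do not anticipate a serious obstacle, only the need to be careful that no operation $\Sq^i$ with $i\in\{3,5,6,\dots\}$ sneaks in a nontrivial contribution --- all such contributions land outside the support of the cohomology and hence vanish.
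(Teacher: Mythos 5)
Your proof is correct and takes essentially the same approach as the paper: identify $\mr{H}^{*,*}(\Y_{(i,j)}^\R)$ as a tensor product via Kunneth, apply the Cartan formula, and observe that the distinction between the two cases traces back to whether $\Sq^2(y_{0,0})$ vanishes ($\coneR{\hsf}$) or equals $\rho\, y_{1,0}$ ($\coneR{2}$), which is exactly \autoref{A(0)exnts}. Two small imprecisions worth flagging: first, you wrote the classical Cartan formula $\Sq^4(xy)=\sum_i \Sq^i(x)\Sq^{4-i}(y)$, whereas the $\R$-motivic formula carries $\tau$-factors on the odd-index cross terms (as displayed in the paper's proof); this is harmless here because the terms $\Sq^3(y_{0,0})\otimes\Sq^1(x_{0,0})$ and $\Sq^1(y_{0,0})\otimes\Sq^3(x_{0,0})$ vanish on bidegree grounds anyway, but you should quote the correct motivic Cartan formula. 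Second, your assertion that the $\Sq^2$ patterns literally coincide in the two cases is a mild oversimplification: in the natural tensor basis, $\Sq^2(v_{0,0}) = v_{2,1} + \rho\, v_{1,0}$ in $\mr{H}^{*,*}(\Yone^\R)$ (compare the map $f_2$ in \autoref{rem:mapfi}), so the agreement is only up to an $\mathbb{M}_2^\R$-basis change (or suppression of $\rho$-divisible contributions in the figure). Neither issue affects your $\Sq^4$ computation, which is the content of the lemma.
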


\begin{proof} The result is an easy consequence of a calculation using the Cartan formula
\[ \Sq^4 (xy) = \Sq^4(x)y +\tau \Sq^3(x) \Sq^1(y) + \Sq^2(x)  \Sq^2(y) + \tau\Sq^1 (x) \Sq^3(y) + x \Sq^4(y) \]
and the fact that $\Sq^2( y_{0,0}) = \rho y_{1,0}$ in $\mr{H}^{*,*}(\coneR{2})$, whereas  $\Sq^2( y_{0,0}) = 0$ in $\mr{H}^{*,*}(\coneR{\hsf})$ (see \autoref{A(0)exnts}).
\end{proof}

\begin{rmk} Comparing \autoref{lem:YSq4} and \autoref{lem:AA_1},  we see that the $\cA^\R(1)$-module map $f_2$, as in \autoref{rem:mapfi}, cannot be extended to a map of  $\cA^\R$-modules. 
\end{rmk}

\begin{cor} There is an exact sequence of $\cA^\R$-modules 
\begin{equation} \label{vexact}
\begin{tikzcd}
0 \rar & \mr{H}^{*,*}(\Sigma^{3,1} \Ythree^\R) \rar["\pi^*"] & \mr{H}^{*,*}(\cA_1^\R) \rar["\iota^*"] & \mr{H}^{*,*}(\Ythree^\R)   \rar & 0 .
\end{tikzcd}
\end{equation}
\end{cor}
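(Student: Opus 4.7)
The plan is to read off the exact sequence directly from the explicit $\cA^\R$-module structure of $\mr{H}^{*,*}(\cA_1^\R)$ computed in \autoref{lem:AA_1} and pictured in \autoref{fig:motivicA1}. Let $N \subset \mr{H}^{*,*}(\cA_1^\R)$ be the $\mathbb{M}_2^\R$-submodule generated by the four classes $\{w_{3,1}, w_{4,1}, w_{5,2}, w_{6,2}\}$. First I would verify that $N$ is closed under the full $\cA^\R$-action. The actions of $\Sq^1$ and $\Sq^2$ on these classes visibly land in $N$ by \autoref{fig:motivicA1}. For higher Steenrod operations, every $w$-class already sits in cohomological degree at least $(3,1)$, and $\mr{H}^{*,*}(\cA_1^\R)$ is concentrated in cohomological degrees between $(0,0)$ and $(6,2)$; so $\Sq^i(w_{*,*})$ with $i\geq 4$ is forced to vanish by degree, with the one potentially nontrivial case $\Sq^4(w_{3,1})$ explicitly set to zero in \autoref{lem:AA_1}(4). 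Since $\tau$ and $\rho$ act via $\mathbb{M}_2^\R$-linearity, $N$ is an $\cA^\R$-submodule.

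Next I would identify $N$ with $\mr{H}^{*,*}(\Sigma^{3,1}\Ythree^\R)$ as $\cA^\R$-modules. The bottom class $w_{3,1}$ sits in the correct bidegree, and the restriction of the $\cA^\R(1)$-module structure on $\mr{H}^{*,*}(\cA_1^\R)$ to the $w$-summand (visible in \autoref{fig:motivicA1}) is cyclic on $w_{3,1}$ with $Q_1 \cdot w_{3,1} = 0$, so it is isomorphic to $\mathcal{B}^\R(1)$. By the vanishing of $\Sq^4$ on all $w$-classes established in Step~1, the $\cA^\R$-extension of this $\cA^\R(1)$-structure agrees with the $\cA^\R$-module $\mr{H}^{*,*}(\Ythree^\R)$ described in \autoref{lem:YSq4}.

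Then I would analyze the quotient $\mr{H}^{*,*}(\cA_1^\R)/N$, which has $\mathbb{M}_2^\R$-basis $\{v_{0,0}, v_{1,0}, v_{2,1}, v_{3,1}\}$. Its $\cA^\R(1)$-module structure is again $\mathcal{B}^\R(1)$. Relations (1)--(3) of \autoref{lem:AA_1} show that $\Sq^4$ of every $v$-class lies in $N$, and hence becomes trivial after passing to the quotient; the higher operation $\Sq^8$ likewise vanishes by \autoref{lem:AA_1}(5) together with degree reasons. Thus the $\cA^\R$-module structure on the quotient matches $\mr{H}^{*,*}(\Ythree^\R)$ from \autoref{lem:YSq4}. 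Combining Steps~2 and~3 yields the short exact sequence \eqref{vexact}, with $\pi^*$ and $\iota^*$ the inclusion of $N$ and the projection to the quotient, respectively.

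The main subtlety is ensuring that the splitting of $\mr{H}^{*,*}(\cA_1^\R)$ into ``$v$-part'' and ``$w$-part'' is respected by the full $\cA^\R$-action and not just the subalgebra $\cA^\R(1)$. Once one knows (from \autoref{lem:AA_1}) that $\Sq^4$ sends $v$'s into $N$ and annihilates $w$'s, the remaining Steenrod squares are controlled by Adem relations, the Cartan formula for the $\mathbb{M}_2^\R$-action, and the narrow degree range in which $\mr{H}^{*,*}(\cA_1^\R)$ lives, so there is no room for unwanted contributions crossing between the two pieces.
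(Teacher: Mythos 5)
Your proposal has a genuine gap: the submodule $N=\mathbb{M}_2^\R\{w_{3,1},w_{4,1},w_{5,2},w_{6,2}\}$ is indeed an $\cA^\R$-submodule, but it is \emph{not} isomorphic to $\mr{H}^{*,*}(\Sigma^{3,1}\Ythree^\R)$, and the quotient by $N$ is \emph{not} isomorphic to $\mr{H}^{*,*}(\Ythree^\R)$. The problem is that $\Sq^2(w_{3,1})=0$ (there is no blue curve out of $w_{3,1}$ in \autoref{fig:A1} or \autoref{fig:motivicA1}; the blue arrow into $w_{5,2}$ comes from $v_{3,1}$, not from $w_{3,1}$). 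Consequently, your claim that $N$ is cyclic on $w_{3,1}$ with $Q_1\cdot w_{3,1}=0$ fails on both counts: $w_{5,2}$ is not in the $\cA^\R(1)$-span of $w_{3,1}$, since the only degree-$(2,1)$ operation is $\Sq^2$ and it kills $w_{3,1}$; and $Q_1 w_{3,1}=\Sq^2\Sq^1 w_{3,1}=\Sq^2 w_{4,1}=w_{6,2}\neq 0$. Meanwhile in $\mathcal{B}^\R(1)=\mr{H}^{*,*}(\Ythree^\R)$, the bottom class has both $\Sq^1$ and $\Sq^2$ nonzero, forming the familiar diamond; $N$ does not have this shape.

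The quotient has the dual failure. In $\mr{H}^{*,*}(\cA_1^\R)/N$ one has $\Sq^2(\bar v_{1,0})=\overline{w_{3,1}}=0$, whereas in $\mr{H}^{*,*}(\Ythree^\R)$ the degree-$(1,0)$ class has $\Sq^2(y_{1,0}x_{0,0})=y_{1,0}x_{2,1}\neq 0$ (this is exactly the right-hand blue edge in \autoref{fig:YoneYthree}). So the short exact sequence you construct does not have the correct outer terms.

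The fix, which is what the paper does, is to use the explicit $\cA^\R(1)$-module surjection $f_\hsf\colon\mr{H}^{*,*}(\cA_1^\R)\to\mr{H}^{*,*}(\Ythree^\R)$ from \autoref{rem:mapfi} and check (via \autoref{lem:AA_1} and \autoref{lem:YSq4}) that it is in fact $\cA^\R$-linear. Its kernel is spanned over $\mathbb{M}_2^\R$ by $\{v_{3,1}+w_{3,1},\,w_{4,1},\,w_{5,2},\,w_{6,2}\}$ --- note the first generator is $v_{3,1}+w_{3,1}$, not $w_{3,1}$. One checks $\Sq^1(v_{3,1}+w_{3,1})=w_{4,1}$, $\Sq^2(v_{3,1}+w_{3,1})=w_{5,2}$, $\Sq^4(v_{3,1}+w_{3,1})=0$, so the kernel really is $\mr{H}^{*,*}(\Sigma^{3,1}\Ythree^\R)$, and by surjectivity the quotient is $\mr{H}^{*,*}(\Ythree^\R)$. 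Your decomposition is ``close'' --- it agrees with the correct one away from filtration $3$ --- but replacing $v_{3,1}+w_{3,1}$ with $w_{3,1}$ breaks the $\Sq^2$-structure on both ends of the sequence.
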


\begin{proof} From the description of the map $f_\hsf$ in \autoref{rem:mapfi}, along with \autoref{lem:AA_1} and  \autoref{lem:YSq4}, it is easy to check that $f_\hsf$ extends to an $\cA^\R$-module map and that 
\[ \ker f_\hsf \cong \mr{H}^{*,*}(\Ythree^\R)\]
as $\cA^\R$-modules. 
\end{proof}

The exact sequence \eqref{vexact} corresponds to a nonzero element in the $\mr{E}_2$-page of the $\R$-motivic Adams spectral sequence  (also see \autoref{rem:algdual} and \autoref{dualASS} )
\begin{equation} \label{v:ASS}
 \overline{v} \in \Ext_{\cA^\R}^{2,1,1}(  \mr{H}^{*,*}(\Ythree^\R \sma D \Ythree^\R), \mathbb{M}_2^\R   ) \Rightarrow [ \Ythree^\R, \Ythree^\R]_{2, 1},
 \end{equation}
where $D \Ythree^\R := \mr{F}(\Ythree^\R, \bS_\R )$ is the Spanier-Whitehead dual of $\Ythree^\R$. If 

\begin{notn} 
Note that we follow \cites{LowMilnorWitt,BI} in grading $\Ext_{\cAR}$ as $\Ext_{\cAR}^{s,f,w}$, where $s$ is the stem, $f$ is the Adams filtration, and $w$ is the weight. We will also follow \cite{C2MW0} in referring to the difference $s-w$ as the {\it coweight}. 
\end{notn}

\begin{rmk} \label{rem:algdual}
Since $\mr{H}^{*,*}( \Ythree^\R)$ is $\mathbb{M}_2^\R$-free, an appropriate universal-coefficient spectral sequence collapses and we get 
$\mr{H}^{*,*}( D\Ythree^\R)  \cong \hom_{\mathbb{M}_2^\R}( \mr{H}^{*,*}( \Ythree^\R), \mathbb{M}_2^\R). $
Further, the Kunneth isomorphism of \eqref{eqn:Kunnethiso} gives us 
\[ \mr{H}^{*,*}(\Ythree^\R \sma D \Ythree^\R) \cong \mr{H}^{*,*}(\Ythree^\R) \otimes_{\mathbb{M}_2^\R} \mr{H}^{*,*}(D\Ythree^\R), \]
and therefore, 
\[ \Ext_{\cA^\R}^{*,*,*}( \mathbb{M}_2^\R , \mr{H}^{*,*}(\Ythree^\R \sma D \Ythree^\R)  ) \cong \Ext_{\cA^\R}^{*,*,*}(\mr{H}^{*,*}(\Ythree^\R) , \mr{H}^{*,*}(\Ythree^\R)  ) .\]
\end{rmk}

 \autoref{main1} follows immediately if we show that the element $\overline{v}$ is a nonzero permanent cycle.  The following lemma implies  that a $d_r$-differential  (for $r \geq 2$) supported by  $\overline{v}$  has no potential nonzero target. 
 
\begin{prop} \label{prop:notarget} For $f \geq 3$,  $ \Ext_{\cA^\R}^{1, f, 1}(\mr{H}^{*,*}(\Ythree^\R) , \mr{H}^{*,*}(\Ythree^\R)  )  = 0 $.
\end{prop}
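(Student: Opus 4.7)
My plan is to reduce the statement to a computation in the $\R$-motivic Adams $E_2$-page for the sphere, which is computed in the literature (e.g.\ \cite{BI}). Since $\mr{H}^{*,*}(\Ythree^\R)$ is free as an $\mathbb{M}_2^\R$-module, the adjunction of \autoref{rem:algdual} yields the identification
\[
\Ext_{\cA^\R}^{1,f,1}\big(\mr{H}^{*,*}(\Ythree^\R), \mr{H}^{*,*}(\Ythree^\R)\big) \ \cong \ \Ext_{\cA^\R}^{1,f,1}\big(\mathbb{M}_2^\R, \mr{H}^{*,*}(\Ythree^\R \sma D\Ythree^\R)\big).
\]

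Next, I would build a length-four cellular filtration of $\Ythree^\R \sma D\Ythree^\R$ using the two-cell cofiber sequences
\[
\bS_\R \xrtar{\hsf} \bS_\R \rtarr \coneR{\hsf} \qquad \text{and} \qquad \bS^{1,1}_\R \xrtar{\eta_{1,1}} \bS_\R \rtarr \coneR{\eta_{1,1}},
\]
together with the Spanier-Whitehead duals of the corresponding presentations of $D\Ythree^\R$, obtaining a filtration whose associated graded is a wedge of sixteen shifted copies of $\bS_\R$. Taking $\R$-motivic cohomology gives a convergent spectral sequence
\[
E_1^{s,f,w} \ = \ \bigoplus_{i} \Ext_{\cA^\R}^{s-s_i,\, f,\, w-w_i}(\mathbb{M}_2^\R, \mathbb{M}_2^\R) \ \Longrightarrow \ \Ext_{\cA^\R}^{s,f,w}\big(\mathbb{M}_2^\R, \mr{H}^{*,*}(\Ythree^\R \sma D\Ythree^\R)\big),
\]
where $(s_i, w_i)$ runs over the cells of $\Ythree^\R \sma D\Ythree^\R$, lying in the bounded range $s_i \in [-3, 3]$, $w_i \in [-1, 1]$.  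The $d_1$-differentials are multiplication by the elements $h_0$ and $h_1$ of the $\R$-motivic Adams spectral sequence for the sphere, which respectively detect $\hsf$ and $\eta_{1,1}$.

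To conclude, I would enumerate the finitely many $E_1$-contributions in the target tridegree $(1,f,1)$, $f\geq 3$, by consulting a chart of $\Ext_{\cA^\R}(\mathbb{M}_2^\R, \mathbb{M}_2^\R)$ in the finite window $s \in [-2, 4]$, $w \in [0, 2]$, $f\geq 3$, and show that every nonzero class is annihilated by one of the $h_0$- or $h_1$-multiplication differentials induced by the attaching maps of the filtration.

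The main obstacle is the $\rho$-power family $\rho^{f-1} h_1^{f} \in \Ext_{\cA^\R}^{1, f, 1}(\mathbb{M}_2^\R, \mathbb{M}_2^\R)$, together with its $\tau$-, $\rho$-, and $h_0$-multiples landing in nearby tridegrees of coweight zero. These classes are nonzero in every filtration of interest, so a naive cell-counting argument does not suffice. The crux of the proof will be to verify that the two $\eta_{1,1}$-attaching maps (one coming from $\Ythree^\R$ and one from $D\Ythree^\R$) produce $h_1$-multiplication differentials which, together with the $h_0$-differentials, exhaust these $\rho$-power contributions on the $E_\infty$-page.
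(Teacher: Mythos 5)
Your approach is essentially the paper's, reorganized: instead of the paper's nested filtration (filter $\Ythree^\R$ once, reducing to $\Ext_{\cAR}(\mr{H}^{*,*}(\Ythree^\R), \bMR)$ tensored with four generators, then filter $\Ythree^\R$ again to compute that $\Ext$ group by climbing from $\bS_\R$ through $\coneR{\eta_{1,1}}$), you filter the full $16$-cell spectrum $\Ythree^\R \sma D\Ythree^\R$ in one step. Both reduce the question to a bounded piece of $\Ext_{\cAR}(\bMR,\bMR)$ together with its $h_0$- and $h_1$-multiplication structure. Your reduction, the identification of the relevant range $s \in [-2,4]$, $w \in [0,2]$, and your identification of the $d_1$'s as $h_0$- and $h_1$-multiplications are all correct, and you have correctly located the genuine difficulty.

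The gap is that you stop exactly where the work begins. You single out the infinite $\rho$-power family $\rho^{f-1}h_1^f \in \Ext_{\cAR}^{1,f,1}(\bMR,\bMR)$ (and its relatives in nearby coweights) as ``the main obstacle,'' and then say ``the crux of the proof will be to verify'' that the $h_1$- and $h_0$-differentials exhaust them. That verification is the entire content of the proposition; labeling it as a remaining task does not discharge it. The paper's proof is, in effect, precisely that verification: it tracks the long exact sequences through $\coneR{\eta_{1,1}}$ (where $\rho^{f-1}h_1^f = h_1\cdot \rho^{f-1}h_1^{f-1}$ is visibly killed by the $\eta_{1,1}$-attaching map) and then through the $\hsf$-attaching map to $\Ythree^\R$, and records the resulting $\Ext$ charts in coweights $0$, $1$, and $2$ to exhibit the vanishing. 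Your one-step $16$-cell filtration is also workable, but would require additional bookkeeping: unlike a two-cell-at-a-time approach, you must specify which pairs of the sixteen cells carry a $d_1$, and you must verify that classes surviving one family of $d_1$'s (say, the $h_0$-ones) do not merely move to a different spot and survive. In short, the scaffold is sound but the load-bearing computation is absent; as it stands this is an outline, not a proof.
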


In order to calculate $\Ext_{\cA^\R}^{*,*,*}(\mr{H}^{*,*}(\Ythree^\R) , \mr{H}^{*,*}(\Ythree^\R)  )$, we filter the spectrum $\Ythree^\R$ via the evident maps
\[  
\begin{tikzcd}
Y_0 \dar[equals] \rar &  Y_1 \dar[equals]  \rar  &  Y_2 \dar[equals]  \rar & Y_3. \dar[equals]    \\
\bS_\R &  \coneR{\hsf} & \coneR{\hsf} \cup_{\bS_\R} \coneR{\eta_{1,1}} & \Ythree^\R
\end{tikzcd}
\]
Note that $\mr{H}^{*,*}(Y_j)$ are free $\mathbb{M}_2^\R$-modules. The above filtration results in cofiber  sequences 
\[ 
\begin{tikzcd}[row sep=small]
Y_0 \rar & Y_1 \rar & \Sigma^{1,0} \bS_\R, &\\
Y_1 \rar & Y_2 \rar & \Sigma^{2,1} \bS_\R, & \text{and} \\
Y_2 \rar & Y_3  \rar & \Sigma^{3,1} \bS_\R,  &
\end{tikzcd}
\] 
which induce short exact sequences of $\cA^\R$-modules as the connecting map 
\[ \coneR{Y_j \to Y_{j+1}} \longrightarrow \Sigma Y_j \]
induces the zero map in $\mr{H}^{*,*}(-)$. Thus, applying the functor $\Ext_{\cA^\R}^{*,*,*}(  \mr{H}^{*,*}(\Ythree), -)$ to these short-exact sequences, we get long exact sequences, which can be spliced together to obtain an Atiyah-Hirzebruch like  spectral sequence 
\[ 
\begin{tikzcd}
\mr{E}_1^{*,*,*,*} = \Ext_{\cA^\R}^{*,*,*}(\mr{H}^{*,*}(\Ythree),  \mathbb{M}_2^\R) \{ g_{0,0}, g_{1,0}, g_{2, 1 }, g_{3, 1} \} \dar[Rightarrow]   \\
\Ext_{\cA^\R}^{*,*,*}(\mr{H}^{*,*}(\Ythree^\R) , \mr{H}^{*,*}(\Ythree^\R)  ).
\end{tikzcd}
\]
An element  $x \cdot g_{i,j}$ in the $\mr{E}_2$-page contributes to the degree $ |x| -(i,0,j)$ of the abutment. Thus, \autoref{prop:notarget} is a straightforward consequence of the following \autoref{prop:notarget2}. 

\begin{rmk} \label{dualASS} Because, $\mr{H}^{*,*}(\Ythree^\R)$ is $\mathbb{M}_2^\R$-free and finite, we have 
\[ \mr{H}_{*,*}( \Ythree^\R ) \cong \hom_{\mathbb{M}_2^\R}( \mr{H}^{*,*}(\Ythree), \mathbb{M}_2^\R ), \]
and therefore,
$\Ext_{\cA^\R}^{s,f, w}( \mr{H}^{*,*}(\Ythree^\R), \mathbb{M}_2^\R) \cong  \Ext_{\cA^\R_*}^{s,f,w}(\mathbb{M}_2^\R, \mr{H}_{*,*}(\Ythree^\R)).$
\end{rmk}

\begin{prop} \label{prop:notarget2} For $f \geq 3$ and $(i,j) \in \{ (0,0), (1,0), (2,1), (3,1) \}$, we have that
 \[  \Ext_{\cA^\R_*}^{ 1 +i, f, 1 +j}( \mathbb{M}_2^\R,\mr{H}_{*,*}(\Ythree^\R)  )  = 0 .\]
\end{prop}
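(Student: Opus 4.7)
My plan is to reduce Proposition~\ref{prop:notarget2} to explicit vanishing statements on the $\R$-motivic Adams $E_2$-page of the sphere by running an Atiyah-Hirzebruch-style spectral sequence built from the cellular filtration of $\Ythree^\R$. Concretely, the filtration
\[ \mr{H}_{*,*}(Y_0) \subset \mr{H}_{*,*}(Y_1) \subset \mr{H}_{*,*}(Y_2) \subset \mr{H}_{*,*}(Y_3) = \mr{H}_{*,*}(\Ythree^\R) \]
of $\cA^\R_\ast$-comodules has successive quotients $\Sigma^{p,q}\mathbb{M}_2^\R$ for $(p,q) \in \{(0,0),(1,0),(2,1),(3,1)\}$. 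Applying $\Ext^{*,*,*}_{\cA^\R_*}(\mathbb{M}_2^\R,-)$ to the associated short exact sequences yields long exact sequences that splice into a spectral sequence whose $E_1$-page is a direct sum of four shifted copies of the sphere $E_2$-page $\Ext^{*,*,*}_{\cA^\R_*}(\mathbb{M}_2^\R,\mathbb{M}_2^\R)\{g_{p,q}\}$, abutting to $\Ext^{*,*,*}_{\cA^\R_*}(\mathbb{M}_2^\R,\mr{H}_{*,*}(\Ythree^\R))$. A class $x \cdot g_{p,q}$ with $|x|=(s,f,w)$ lands in tridegree $(s+p,f,w+q)$ of the abutment.

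For the claim, fix $(i,j) \in \{(0,0),(1,0),(2,1),(3,1)\}$ and $f \geq 3$. The contributions to $\Ext^{1+i, f, 1+j}_{\cA^\R_*}(\mathbb{M}_2^\R,\mr{H}_{*,*}(\Ythree^\R))$ live in $\Ext^{1+i-p,\,f,\,1+j-q}_{\cA^\R_*}(\mathbb{M}_2^\R,\mathbb{M}_2^\R)$ as $(p,q)$ ranges over the four cell-shifts. Enumerating the sixteen combinations produces a short list of stem-weight pairs $(s,w)$ to check, all having coweight $s - w \leq 2$. This places the entire computation inside the range where the $\R$-motivic $E_2$-page of the sphere has been explicitly described (e.g.\ via the $\rho$-Bockstein spectral sequence of Dugger-Isaksen and by Belmont-Isaksen).

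The remaining work is then case-by-case at the $E_1$-page. For most of the tridegrees on the list, $\Ext^{s,f,w}_{\cA^\R_*}(\mathbb{M}_2^\R,\mathbb{M}_2^\R)$ is already zero at $f \geq 3$ from the known computation of the sphere $E_2$. Where this is not the case---most notably at $(s,w)=(0,0)$, where $h_0^f$ survives, and at $(s,w)=(1,1)$, where $\rho$-multiples of $h_1$-products can appear---the nonzero $E_1$-classes must be killed by the $d_1$-differentials in the Atiyah-Hirzebruch spectral sequence. These differentials are read off from the attaching maps of $\Ythree^\R$: namely, $\hsf$ (connecting $(0,0) \leftrightarrow (1,0)$ and $(2,1) \leftrightarrow (3,1)$) and $\eta_{1,1}$ (connecting $(0,0) \leftrightarrow (2,1)$ and $(1,0) \leftrightarrow (3,1)$), which act on the $E_1$-page by multiplication by $h_0 + \rho h_1$ and by $h_1$ respectively.

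The main obstacle is the bookkeeping in this last step: one must track the $d_1$-differentials carefully across the four cell-shifts and verify, in the presence of the $\R$-motivic Adams relations (notably $h_0 h_1 = \rho h_1^2$ and the $\tau$-/$\rho$-action on the coefficient ring), that every nonzero $E_1$-class in one of the enumerated tridegrees is either the source or the target of a $d_1$. Since the list of tridegrees is short and the sphere $E_2$-page is small in coweight $\leq 2$ and $f \geq 3$, this is a finite check, and the vanishing of $\Ext^{1+i,f,1+j}_{\cA^\R_*}(\mathbb{M}_2^\R, \mr{H}_{*,*}(\Ythree^\R))$ follows.
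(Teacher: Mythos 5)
Your overall strategy---reduce to the sphere via the cellular filtration of $\Ythree^\R$ and observe that only coweights $\leq 2$ enter, which is where the $\R$-motivic $\Ext$ has been computed---is the same spirit as the paper. But the paper's proof avoids the four-cell Atiyah--Hirzebruch machinery: it assembles $\Ext_{\cA^\R_*}(\bMR,\mr{H}_{*,*}(\Ythree^\R))$ in two sequential steps via the cofiber sequences $\Sigma^{1,1}\bS_\R \xrtarr{\eta_{1,1}} \bS_\R \to \coneR{\eta_{1,1}}$ and $\coneR{\eta_{1,1}} \xrtarr{\hsf} \coneR{\eta_{1,1}} \to \Ythree^\R$, computing charts in coweights $0,1,2$ at each stage. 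That two-step long exact sequence approach has no higher differentials to worry about, whereas your one-shot four-cell spectral sequence does have $d_2$'s (the $\eta_{1,1}$-attaching maps jump two filtration levels); calling those $d_1$'s and asserting that everything is resolved at the $d_1$-stage understates the bookkeeping.

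More seriously, there is a concrete error in the differentials. You assert that the $\hsf$-attaching acts by multiplication by $h_0 + \rho h_1$. That is the detecting class of multiplication by $2$, not of $\hsf$. As established in \autoref{A(0)exnts}, $\hsf$ is detected by $h_0$, and correspondingly $\Sq^2(y_{0,0}) = 0$ in $\mr{H}^{*,*}(\coneR{\hsf})$, so the relevant extension class is $h_0$, not $h_0 + \rho h_1$. If you run the spectral sequence with $h_0 + \rho h_1$ you are in effect computing for $\Yone^\R = \coneR{2} \sma \coneR{\eta_{1,1}}$ rather than $\Ythree^\R = \coneR{\hsf} \sma \coneR{\eta_{1,1}}$; these have genuinely different $\cA^\R$-module structures (\autoref{lem:YSq4}), so the distinction is not cosmetic. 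Finally, the key quantitative content---that the finite list of $(s,f,w)$'s you enumerate actually vanishes after running the differentials---is exactly the substance of the proposition, and you acknowledge it is left unchecked; the paper supplies this by explicit charts starting from \cite{LowMilnorWitt} and \cite{BI}. As written, the proposal is a plausible plan with one wrong input and the decisive verification missing.
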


\begin{proof}
Our desired vanishing concerns only the groups $\Ext_{\cAR_*}(\bMR, \mr{H}_{*,*}(\Ythree^\R)  )$
in coweights 0, 1 and 2. 
 These groups  can be easily  calculated  starting from the computations of $\Ext_{\cAR_*}^{*,*,*}(\bMR, \bMR)$ in \cite{LowMilnorWitt} and \cite{BI} and using the short exact sequences in $\Ext_{\cAR_*}$ arising from the cofiber sequences
\[
\Sigma^{1,1} S_\R \xrtarr{\eta_{1,1}} S_\R \rtarr \coneR{\eta_{1,1}} \quad \text{and} \] \[ \coneR{\eta_{1,1}} \xrtarr{\hsf} \coneR{\eta_{1,1}} \rtarr \coneR{\hsf} \sma \coneR{\eta_{1,1}} = \Ythree^{ \R}.
\]
We display $\Ext_{\cAR_*}(\bMR, \mr{H}_{*,*}(\coneR{\eta_{1,1}}))$ in coweights 0, 1 and 2 in the charts below. 
\vspace{5pt} \\
\centerline{
\includegraphics[width=0.4\textwidth]{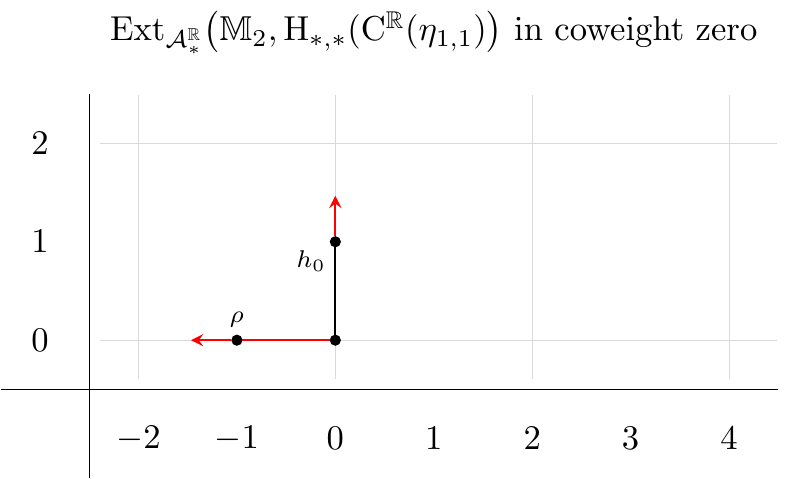} \qquad 
\includegraphics[width=0.4\textwidth]{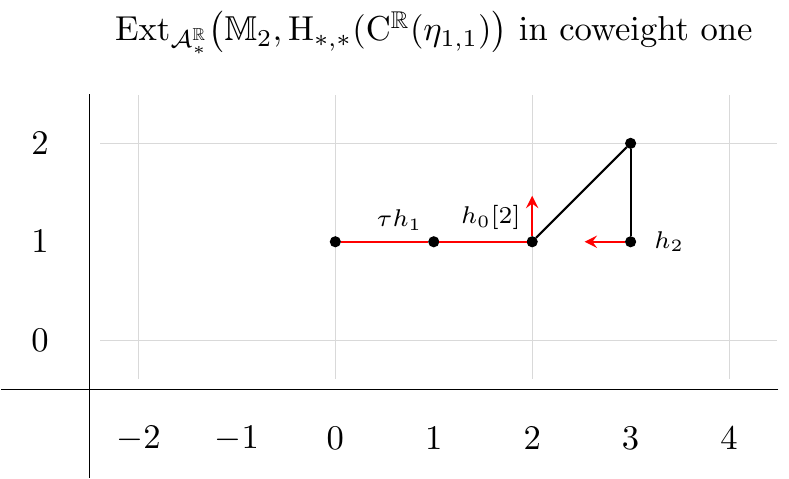} 
}
 \centerline{
\includegraphics[width=0.5\textwidth]{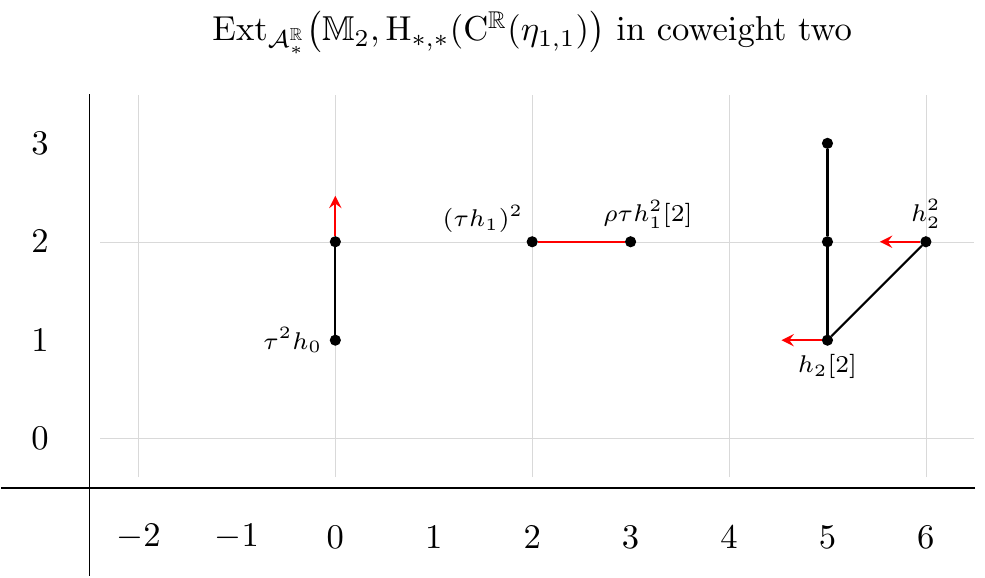} 
} \vspace{-1pt} \\
We find that $\Ext_{\cAR_*}(\bMR, \mr{H}_{*,*}(\Ythree^\R)  )$ is, in coweights zero,  one, and two, also given by the charts below. \vspace{5pt} \\
\centerline{
\includegraphics[width=0.4\textwidth]{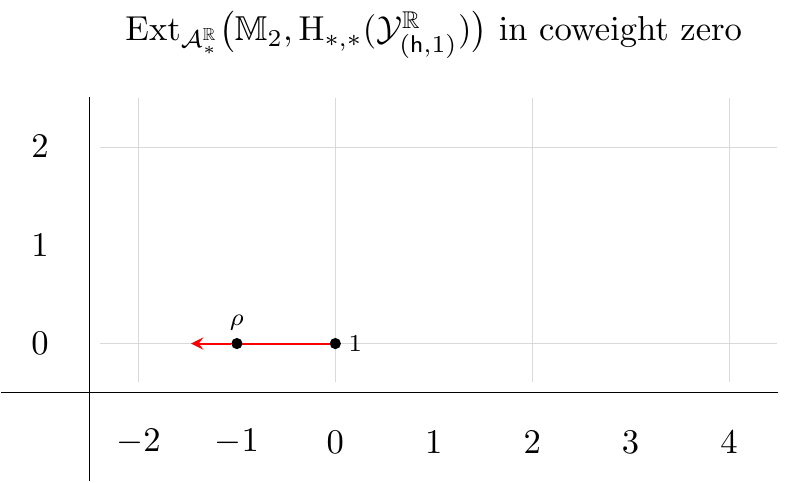} \qquad 
\includegraphics[width=0.4\textwidth]{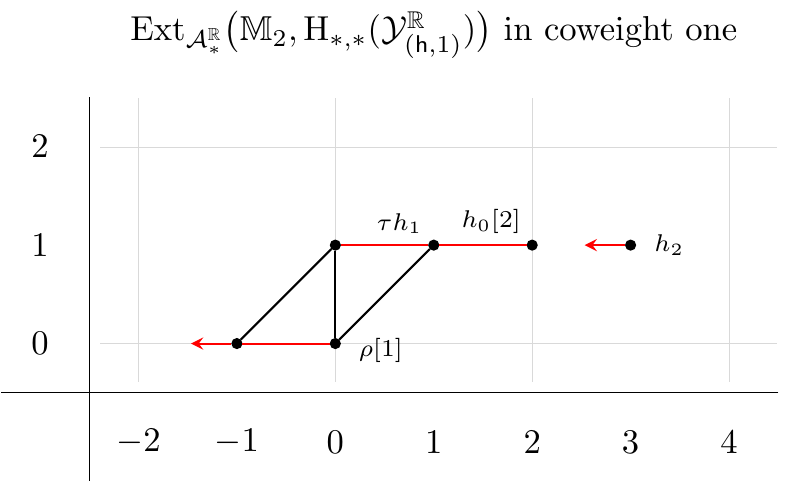} 
} \centerline{
\includegraphics[width=0.5\textwidth]{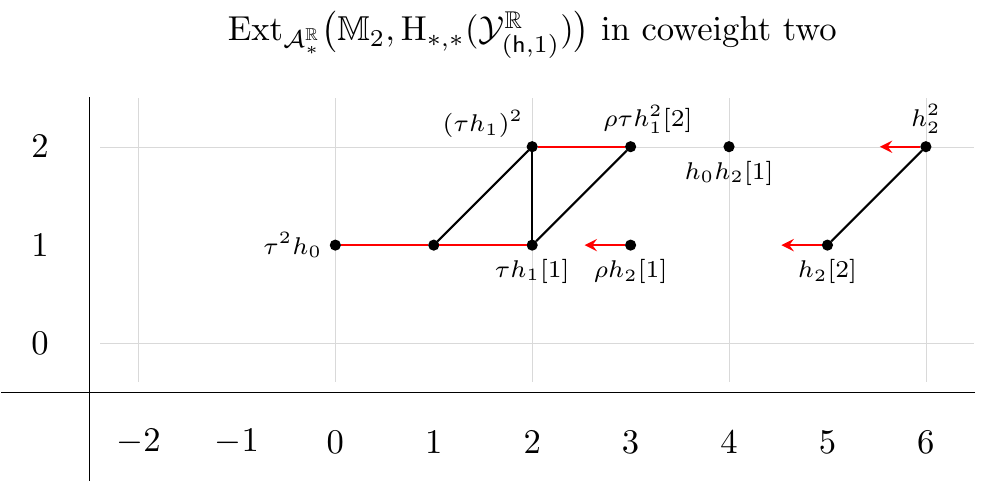} 
}
The result follows from the above charts. 
\end{proof}
\begin{rmk} One can also resolve \autoref{prop:notarget2}  directly using the $\rho$-Bockstein spectral sequence 
\begin{equation} \label{rhoB}
\begin{tikzcd}
\mr{E}_1:=   \Ext_{\cA^\C_\ast}(\F[\tau], \mr{H}_{*,*}(\Ythree^\C)) \otimes \F[\rho]  \dar[Rightarrow] \\ 
\Ext_{\cA^\R_\ast}(\mathbb{M}_2^\R, \mr{H}_{*,*}(\Ythree^\R)) 
\end{tikzcd}
\end{equation}
and identifying a vanishing region for $\Ext_{\cA^\C_\ast}^{s,f,w}(\F[\tau], \mr{H}_{*,*}(\Ythree^\C))$. Even a rough estimate of the vanishing region using the $\mr{E}_1$-page of the  $\C$-motivic May spectral sequence 
leads to \autoref{prop:notarget2}. Such an approach would avoid explicit calculations of $\Ext_{\cAR}$ as in \cite{LowMilnorWitt} and \cite{BI}.
\end{rmk}

\begin{pf}[Proof of \autoref{main1}]Since  $ \text{\autoref{prop:notarget2}} \implies \text{\autoref{prop:notarget}}$, every map 
\[
\begin{tikzcd}
 v: \Sigma^{2,1}\Ythree^\R \rar & \Ythree^\R
 \end{tikzcd}
 \]
detected by $\overline{v}$ of \eqref{v:ASS} is a nonzero permanent cycle. In order to finish the proof of \autoref{main1} we must show that $v$ is necessarily $v_{(1,\nil)}$\slfmp\ of periodicity $1$. It is easy to see that the underlying map 
\[ 
\begin{tikzcd}
\Phi^e(\Betti(v)): \Sigma^2 \cY  \rar & \cY
\end{tikzcd}
\]
is a $v_1$\slfmp\ of periodicity $1$ as  \[ \mr{C}(\Phi^e(\Betti(v))) \simeq \Phi^e(\Betti (\coneR{v})) \simeq \cA_1[10] \]  is of type $1$ (see \autoref{rem:A00}). On the other hand, 
\[ 
\begin{tikzcd}
\Phi^{\Ctwo}(\Betti(v)):  \Sigma^2(\Sigma \M_2(1) \vee  \M_2(1)) \rar & \Sigma \M_2(1) \vee \M_2(1)
\end{tikzcd}
\]
is necessarily a nilpotent map because of \cite[Theorem~3(ii)]{Thick} and the fact that a  $v_1$\slfmp\ of $\M_2(1)$ has periodicity at least $4$ (see \cite{DM} for details) which lives in $[\M_2(1), \M_2(1)]_{8k}$ for $k\geq 1$.   
\end{pf}
\begin{pf}[Proof of \autoref{thm:cofibv1}] Since $v$ is a $v_{(1,\nil)}$\slfmp\ and $\Ythree^\R$ is of type $(1,1)$, it follows that $\coneR{v}$ is of type $(2,1)$. Moreover,
\[ \mr{H}^{*,*}(\coneR{v}) \cong \mr{H}^{*,*}(\cA_1^\R)\]
as $v$ is detected by $\overline{v}$  of \eqref{v:ASS} in the $\mr{E}_2$-page of the Adams spectral sequence. Thus, $\mr{H}^{*,*}(\coneR{v})$ is a free $\cA^\R(1)$-module on single generator. 
\end{pf}
\begin{rmk} \label{rem:Y1v1} It is likely that  realizing a different $\cA^\R$-module structure on $\cA^\R(1)$  as a spectrum (see also \autoref{rem:A128}) may lead to a $1$-periodic $v_1$\slfmp\ on $\Yone^\R$ as well as  on $\Yone^{\Ctwo}$.  We explore such possibilities in  upcoming work. 
\end{rmk}

\section{ Nonexistence of $v_{1,0}$\slfmp\ on $\coneR{\hsf}$ and $\Ytwo^\R$ }
\label{nonexistence}

Let $X$ be a finite $\R$-motivic spectrum and let $f \colon \Sigma^{i,j} X \to X$ be a map such that 
 \[ 
 \begin{tikzcd}
 \Phi^{\Ctwo}(\Betti(f)) \colon \Sigma^{i-j} \Phi^{\Ctwo}(\Betti(X)) \rar & \Phi^{\Ctwo}(\Betti(X)) 
 \end{tikzcd} \] 
  is a $v_0$\slfmp. Then  it must be the case that $i =j$, as $v_0$\slfmp{s} preserve dimension. Note that both $\coneR{\hsf}$ and $\Ytwo^\R$ are of type $(1,0)$.

   \begin{prop} The $v_1$\slfmp s of $\M_2(1)$ are not in the image of the underlying homomorphism
\[ \Phi^e \circ \Betti \colon [ \Sigma^{8k,8k} \coneR{\hsf}, \coneR{\hsf}]^{\R} \rtarr [\Sigma^{8k} \M_2(1), \M_2(1)]. \]
\end{prop}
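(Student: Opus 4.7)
The plan is to reduce this nonexistence statement to the two obstructions already established in the paper: \autoref{thm:nonexist1} (no $v_{(1,0)}$--self-map on $\coneR{\hsf}$) together with the Balmer--Sanders restriction which prohibits a $v_{(1,\nil)}$--self-map on a type $(1,0)$ spectrum. The key observation is that a lift $f \in [\Sigma^{8k,8k}\coneR{\hsf},\coneR{\hsf}]^\R$ of a $v_1$--self-map leaves only these two options for the type of $\Betti(f)$, both of which have already been eliminated.

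Concretely, I would argue by contradiction: suppose there is an $\R$-motivic map
\[ f \colon \Sigma^{8k,8k}\coneR{\hsf} \rtarr \coneR{\hsf} \]
with $\Phi^e(\Betti(f))$ a $v_1$--self-map of $\M_2(1)$. Since $\Phi^e(\Betti(\coneR{\hsf})) \simeq \M_2(1)$ is of type $1$ and $\Phi^e(\Betti(f))$ is non-nilpotent, $\coneR{\hsf}$ carries the type $(1,0)$ established in \autoref{egCtwo3}. Applying $\Phi^{\Ctwo}\circ\Betti$ to $f$ yields a self-map of $\Phi^{\Ctwo}(\Betti(\coneR{\hsf}))$; since the motivic bidegree is $(8k,8k)$ and $\Phi^{\Ctwo}$ sends $\Sigma^{p,q}$ to $\Sigma^{p-q}$, this is a degree-zero self-map of a type $0$ spectrum (the cofiber of $\Phi^{\Ctwo}(\hsf) = 0$).

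There are then exactly two possibilities for $\Phi^{\Ctwo}(\Betti(f))$: it is either non-nilpotent, hence a $v_0$--self-map, or nilpotent. In the first case $\Betti(f)$ is by definition a $\Ctwo$-equivariant $v_{(1,0)}$--self-map of $\coneC{\hsf}$, so $f$ is an $\R$-motivic $v_{(1,0)}$--self-map of $\coneR{\hsf}$, directly contradicting \autoref{thm:nonexist1}. In the second case $f$ is a $v_{(1,\nil)}$--self-map; but then the cofiber of $f$ would be a type $(2,0)$ spectrum (the underlying cofiber has type $2$ since $\Phi^e(\Betti(f))$ is a $v_1$--self-map, while the geometric fixed-point cofiber retains type $0$ because $\Phi^{\Ctwo}(\Betti(f))$ is nilpotent on a type $0$ spectrum), contradicting the Balmer--Sanders inclusion $\cC(e,2) \subset \cC(\Ctwo,1)$ recalled in the introduction.

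The only step requiring care is the second case: I want to be sure that the cofiber of a nilpotent self-map of a type $0$ spectrum is again of type $0$, so that $\Phi^{\Ctwo}(\cofib f)$ really has type $0$ rather than something larger. This is immediate from the cofiber sequence on geometric fixed points together with the fact that a nilpotent map does not raise chromatic type. No further input is needed; both cases close and the proposition follows.
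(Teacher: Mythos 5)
Your argument is circular. The proposition you are asked to prove is precisely the technical content of the paper's proof of \autoref{thm:nonexist1} in \autoref{nonexistence}: the paragraph preceding the proposition reduces the nonexistence of a $v_{(1,0)}$\slfmp\ on $\coneR{\hsf}$ to the fact that the motivic bidegree must be $(8k,8k)$, and then the proposition supplies the missing step. Your Case 1 (where $\Phi^{\Ctwo}(\Betti(f))$ is non-nilpotent) invokes \autoref{thm:nonexist1} to rule out a $v_{(1,0)}$\slfmp\ of $\coneR{\hsf}$, but \autoref{thm:nonexist1} is exactly what this proposition is meant to establish. You cannot use it here without begging the question.

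Your Case 2, ruling out a $v_{(1,\nil)}$\slfmp\ on a type $(1,0)$ spectrum via Balmer--Sanders, is correct and matches the paper's own remark in the introduction. But that leaves Case 1 completely open, and Case 1 is the substance of the claim. The paper's actual proof does something independent: it uses the well-known fact that the composite
\[
\Sigma^{8k}\bS \hookrightarrow \Sigma^{8k}\M_2(1) \xrightarrow{\ v\ } \M_2(1) \longrightarrow \Sigma^1 \bS
\]
of a $v_1$\slfmp\ $v$ with the inclusion of the bottom cell and projection to the top cell is nonzero (it equals $P^{k-1}(8\sigma)$). If $v$ lifted along $\Phi^e\circ\Betti$ to $f\colon \Sigma^{8k,8k}\coneR{\hsf}\to\coneR{\hsf}$, the analogous motivic composite $\Sigma^{8k,8k}\bS_\R \to \Sigma^{1,0}\bS_\R$ would be a nonzero element of $\pi_{*,*}(\bS_\R)$ in negative coweight, contradicting the vanishing of $\pi_{*,*}(\bS_\R)$ in negative coweights from \cite{LowMilnorWitt}. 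This coweight argument is the key idea your proposal is missing; without it, or some other independent obstruction, there is no proof.
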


\begin{proof} The minimal periodicity of a $v_1$\slfmp\ of $\M_2(1)$ is $4$. Let $v:\Sigma^{8k} \M_2(1) \to \M_2(1)$ be a $4k$-periodic $v_1$\slfmp.  It is well-known that the composite 
\begin{equation} \label{nonzero}
\begin{tikzcd}
\Sigma^{8k} \bS\rar[hook] &\Sigma^{8k} \M_2(1) \rar["v"] &   \M_2(1) \rar &  \Sigma^1 \bS
\end{tikzcd}
\end{equation}
is not null (and equals $P^{k-1}(8\sigma)$ where $P$ is a periodic operator given by the Toda bracket $\langle \sigma, 16, - \rangle$).

Suppose there exists $f: \Sigma^{8k,8k}\coneR{\hsf} \to \coneR{\hsf}$ such that $\Phi^{e} \circ \Betti (f) = v$. Then \eqref{nonzero} implies that the composition 
\begin{equation} \label{nonzero2}
\begin{tikzcd}
\Sigma^{8k,8k} \bS_\R \rar[hook] &\Sigma^{8k, 8k} \coneR{\hsf} \rar["v"] &   \coneR{\hsf} \rar &  \Sigma^{1,0} \bS
\end{tikzcd}
\end{equation}
is nonzero as the functor $\Phi^e \circ \Betti$ is additive. The composite  of the maps in \eqref{nonzero2} is a nonzero element of $\pi_{*,*}(\bS_\R)$ in negative  coweight.  This contradicts  the fact that $\pi_{*,*}(\bS_\R)$ is trivial in negative coweights \cite{LowMilnorWitt}.
\end{proof}

   \begin{prop} The $v_1$\slfmp s of $\Y$ are not in the image of the underlying homomorphism
\[ \Phi^e \circ \Betti \colon [ \Sigma^{2k,2k} \Ytwo^\R, \Ytwo^\R]^{\R} \rtarr [\Sigma^{8k} \Y,  \Y]. \]
\end{prop}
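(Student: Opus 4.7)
The plan is to mimic the argument used in the preceding proposition, exploiting the vanishing of $\pi_{*,*}(\bS_\R)$ in negative coweights \cite{LowMilnorWitt}. Suppose, for contradiction, that there exists a map $f \colon \Sigma^{2k,2k}\Ytwo^\R \rtarr \Ytwo^\R$ whose image $\Phi^e(\Betti(f))$ is an iterate $v^k$ of some $v_1$\slfmp\ of $\Y$. Since $\Ytwo^\R \simeq \coneR{\hsf} \sma \coneR{\eta_{1,0}}$ has cells concentrated in weight $0$---in bi-degrees $(0,0)$, $(1,0)$, $(2,0)$, and $(3,0)$---I would form the composite
\[
c \colon \Sigma^{2k,2k}\bS_\R \into \Sigma^{2k,2k}\Ytwo^\R \xrtarr{f} \Ytwo^\R \onto \Sigma^{3,0}\bS_\R,
\]
using the bottom-cell inclusion followed by the top-cell projection. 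Then $c \in \pi_{2k-3,2k}(\bS_\R)$ has coweight $-3$, so $c=0$ by the negative-coweight vanishing result.

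To complete the contradiction, I would identify the underlying classical composite
\[
\Phi^e(\Betti(c)) \colon S^{2k} \into \Sigma^{2k}\Y \xrtarr{v^k} \Y \onto S^3
\]
as a \emph{nonzero} element of $\pi_{2k-3}(S)$ for a suitable $k$. Since $\Phi^e \circ \Betti$ preserves zero maps, such a nonvanishing would be incompatible with $c=0$. The nonvanishing should follow from the Davis--Mahowald \cite{DM} classification of $v_1$\slfmp s of $\Y$, together with the structure of the $v_1$-periodic part of $\pi_\ast(S)$ (compare \cite{BEM}): iterates of a $v_1$\slfmp\ on $\Y$ detect elements of the classical $v_1$-periodic family, and the bottom-to-top cell composite captures a nonzero such element, in analogy with the identification of the $\M_2(1)$ composite with $P^{k-1}(8\sigma)$ used in the preceding proposition.

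The main obstacle is this classical identification: verifying, for some explicit $k$, that the composite $S^{2k}\into \Sigma^{2k}\Y \xrtarr{v^k}\Y \onto S^3$ is nonzero in $\pi_{2k-3}(S)$. This reduces to a $\pi_\ast$-calculation, tracing $v^k$ through the cell filtration of $\Y$ built from the attaching classes $2$ and $\eta$, and recognizing the resulting class---which should be describable as an $\langle\sigma,16,-\rangle$-periodic expression---as a nonzero element of the $v_1$-periodic family. Once this input is secured, the remainder of the proof is formal and follows the template of the preceding proposition verbatim.
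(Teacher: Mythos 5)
Your strategy of pushing the hypothetical self-map through cells of $\Ytwo^\R$ and invoking the negative-coweight vanishing of $\pi_{*,*}(\bS_\R)$ is the right idea, and it is essentially what the paper does. However, your execution has a genuine gap: you only consider the composite from the bottom cell to the \emph{top} cell $\Sigma^{3,0}\bS_\R$. This is not enough. First, there is no reason the classical composite $S^{2k}\into \Sigma^{2k}\Y\xrtar{v^k}\Y\onto S^3$ should be nonzero --- the interesting part of $v^k$ restricted to the bottom cell might land on a middle cell of $\Y$ instead. Second, for the minimal periodicity $k=1$ this composite lives in $\pi_{-1}(\bS)=0$, so it is \emph{automatically} zero and detects nothing; yet the proposition must rule out $k=1$. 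You acknowledge this as ``the main obstacle,'' but it is not merely a computation to be filled in later --- it is a reason the top-cell-only argument cannot work as stated.

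The paper avoids this by a weaker but robust nonvanishing statement: the composite $S^{2k}\into\Sigma^{2k}\Y\xrtar{v}\Y\to\Y_{\geq 1}$ to the (3-cell) quotient by the bottom cell is nonzero --- if it were zero, $v$ would restrict on the bottom cell to a map $S^{2k}\to S^0\into\Y$ which is a $\mathrm{K}(1)_*$-isomorphism onto the bottom-cell summand, contradicting that $\bS$ has type $0$. From this, at least one of the three composites from $\Sigma^{2k,2k}\bS_\R$ through $f$ to $\Sigma^{1,0}\bS_\R$, $\Sigma^{2,0}\bS_\R$, or $\Sigma^{3,0}\bS_\R$ (obtained by successively lifting through the cell filtration of $\Ytwo^\R$) must have nonzero image under $\Phi^e\circ\Betti$. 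All three live in coweights $-1$, $-2$, $-3$ respectively, so all three vanish, giving the contradiction. To repair your argument, replace the single top-cell composite with this coskeleton/cell-filtration trichotomy; the negative-coweight vanishing then applies to whichever cell carries the nonzero class, with no need for the delicate classical identification you were trying to make.
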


\begin{proof} Let $v: \Sigma^{2k} \Y \to \Y$ denote a $v_1$\slfmp\ of periodicity $k$.  Notice that the composite 
 \begin{equation} \label{nonzero3}
\begin{tikzcd}
\mr{S}^{2k} \rar[hook] &\Sigma^{2k} \Y \rar["v"] &   \Y  \rar &   \Y_{\geq 1}
\end{tikzcd}
\end{equation}
where $\Y_{\geq 1}$ is the first coskeleton, must be nonzero. If not, then $v$ factors through the bottom cell resulting in a map $\mr{S}^{2k} \to \Sigma^{2k}\cY \to \bS$ which induces an isomorphism in $\mr{K}(1)$-homology, contradicting the fact that $\bS$ is of type $0$. 

If $f: \Sigma^{2k,2k} \Ytwo^\R \to   \Ytwo^\R$  were a map such that $\Phi^e \circ \Betti (f) = v$, then \eqref{nonzero3} would force one  among the hypothetical composites $(A)$, $(B)$ or $(C)$ in the diagram 
\[ 
\begin{tikzcd}
\Sigma^{2k,2k} \bS_\R \rar[hook]& \Sigma^{2k,2k}  \Ytwo^\R \rar \arrow[dr, dashed] \arrow[ddr, dashed] &  \Ytwo^\R \rar[dashed, "p_3"] &  \Sigma^{3,0} \bS_\R & (A) \\ 
&&\mr{Fib}(p_3) \rar[dashed, "p_2"] \uar[hook] & \Sigma^{2,0} \bS_\R & (B) \\
&&\mr{Fib}(p_2) \rar[dashed, "p_1"] \uar[hook] & \Sigma^{1,0} \bS_\R & (C) \\
\end{tikzcd}
\]
to exist as a nonzero map, thereby contradicting the fact that $\pi_{*,*}(\bS_\R)$ is trivial in negative coweights.
\end{proof}  

\begin{rmk} \label{rem:Ctwov10} The above results do not preclude the existence of a $v_{1,0}$\slfmp\ on $\coneC{\hsf}$ and $\Ytwo^\Ctwo$. Forthcoming work \cite{C2Stems} of the second author and Isaksen shows that $8\sigma$ is in the image of $\Phi^e\colon \pi_{7,8}(\bS_{\Ctwo}) \rtarr \pi_7(\bS)$ and suggests that $\coneC{\hsf}$ supports a $v_{1,0}$\slfmp. 

\end{rmk}

\bibliographystyle{amsalpha}
\begin{bibdiv}
\begin{biblist}

\bib{AM}{article}{
   author={Adams, J. F.},
   author={Margolis, H. R.},
   title={Modules over the Steenrod algebra},
   journal={Topology},
   volume={10},
   date={1971},
   pages={271--282},
   issn={0040-9383},
   review={\MR{294450}},
   doi={10.1016/0040-9383(71)90020-6},
}
	
  \bib{B}{article}{
   author={Balmer, Paul},
   title={The spectrum of prime ideals in tensor triangulated categories},
   journal={J. Reine Angew. Math.},
   volume={588},
   date={2005},
   pages={149--168},
   issn={0075-4102},
   review={\MR{2196732}},
   doi={10.1515/crll.2005.2005.588.149},
}

\bib{BaSa}{article}{
   author={Balmer, Paul},
   author={Sanders, Beren},
   title={The spectrum of the equivariant stable homotopy category of a
   finite group},
   journal={Invent. Math.},
   volume={208},
   date={2017},
   number={1},
   pages={283--326},
   issn={0020-9910},
   review={\MR{3621837}},
   doi={10.1007/s00222-016-0691-3},
}

\bib{BGH}{article}{
    author = {Barthel, Tobias},
    author=  {Greenlees, J. P. C.}, 
    author ={Hausmann, Markus},
     TITLE = {On the {B}almer spectrum for compact {L}ie groups},
   JOURNAL = {Compos. Math.},
  FJOURNAL = {Compositio Mathematica},
    VOLUME = {156},
      YEAR = {2020},
    NUMBER = {1},
     PAGES = {39--76},
      ISSN = {0010-437X},
   MRCLASS = {55P42 (55P91)},
  MRNUMBER = {4036448},
       DOI = {10.1112/s0010437x19007656},
}


\bib{BHHM}{article}{
   author={Behrens, M.},
   author={Hill, M.},
   author={Hopkins, M. J.},
   author={Mahowald, M.},
   title={On the existence of a $v^{32}_2$-self map on $M(1,4)$ at the prime
   2},
   journal={Homology Homotopy Appl.},
   volume={10},
   date={2008},
   number={3},
   pages={45--84},
   issn={1532-0073},
   review={\MR{2475617}},
}	

\bib{BP}{article}{
   author={Behrens, Mark},
   author={Pemmaraju, Satya},
   title={On the existence of the self map $v^9_2$ on the Smith-Toda complex
   $V(1)$ at the prime 3},
   conference={
      title={Homotopy theory: relations with algebraic geometry, group
      cohomology, and algebraic $K$-theory},
   },
   book={
      series={Contemp. Math.},
      volume={346},
      publisher={Amer. Math. Soc., Providence, RI},
   },
   date={2004},
   pages={9--49},
   review={\MR{2066495}},
   doi={10.1090/conm/346/06284},
}

\bib{BShah}{article}{
   author={Behrens, Mark},
   author={Shah, Jay},
   title={$C_2$-equivariant stable homotopy from real motivic stable homotopy},
   journal={Ann. K-Theory},
   volume={5},
   date={2020},
   number={3},
   pages={411--464},
   issn={2379-1683},
   review={\MR{4132743}},
   doi={10.2140/akt.2020.5.411},
    label = {BeS},
}

\bib{BE20}{article}{
   author={Bhattacharya, Prasit},
   author={Egger, Philip},
   title={A class of 2-local finite spectra which admit a $v_2^1$-self-map},
   journal={Adv. Math.},
   volume={360},
   date={2020},
   pages={106895, 40},
   issn={0001-8708},
   review={\MR{4031119}},
   doi={10.1016/j.aim.2019.106895},
}

\bib{BEM}{article}{
   author={Bhattacharya, Prasit},
   author={Egger, Philip},
   author={Mahowald, Mark},
   title={On the periodic $v_2$-self-map of $A_1$},
   journal={Algebr. Geom. Topol.},
   volume={17},
   date={2017},
   number={2},
   pages={657--692},
   issn={1472-2747},
   review={\MR{3623667}},
   doi={10.2140/agt.2017.17.657},
}

\bib{BI}{article}{
     AUTHOR = {Belmont, Eva},
     AUTHOR = {Isaksen, Daniel},
     TITLE = {$\R$-motivic stable stems},
     eprint = {https://arxiv.org/abs/2001.03606},
     year = {2020},
}

\bib{DM}{article}{
   author={Davis, Donald M.},
   author={Mahowald, Mark},
   title={$v_{1}$- and $v_{2}$-periodicity in stable homotopy theory},
   journal={Amer. J. Math.},
   volume={103},
   date={1981},
   number={4},
   pages={615--659},
   issn={0002-9327},
   review={\MR{623131}},
   doi={10.2307/2374044},
}

\bib{LowMilnorWitt}{article}{
   author={Dugger, Daniel},
   author={Isaksen, Daniel C.},
   title={Low-dimensional Milnor-Witt stems over $\mathbb{R}$},
   journal={Ann. K-Theory},
   volume={2},
   date={2017},
   number={2},
   pages={175--210},
   issn={2379-1683},
   review={\MR{3590344}},
   doi={10.2140/akt.2017.2.175},
}

\bib{DI}{article}{
   author={Dugger, Daniel},
   author={Isaksen, Daniel C.},
   title={Motivic cell structures},
   journal={Algebr. Geom. Topol.},
   volume={5},
   date={2005},
   pages={615--652},
   issn={1472-2747},
   review={\MR{2153114}},
   doi={10.2140/agt.2005.5.615},
}

\bib{MotivicHopf}{article}{
   author={Dugger, Daniel},
   author={Isaksen, Daniel C.},
   title={Motivic Hopf elements and relations},
   journal={New York J. Math.},
   volume={19},
   date={2013},
   pages={823--871},
   review={\MR{3141814}},
}

\bib{C2MW0}{article}{
    Author = {Guillou, Bertrand J.},
    Author = {Isaksen, Daniel C.},
    Title = {The Bredon-Landweber region in {$C_2$}-equivariant stable homotopy groups},
    status={to appear in {\it Doc. Math.}},
    eprint = {https://arxiv.org/abs/1907.01539},
  year={2019},
}

\bib{C2Stems}{article} {
    Author = {Guillou, Bertrand J.},
    Author = {Isaksen, Daniel C.},
    Title = {$C_2$-equivariant stable stems},
    status={in progress},
}

\bib{HeKr}{article}{
   author={Heard, Drew},
   author={Krause, Achim},
   title={Vanishing lines for modules over the motivic Steenrod algebra},
   journal={New York J. Math.},
   volume={24},
   date={2018},
   pages={183--199},
   review={\MR{3778499}},
}

\bib{HelOrm}{article}{
   author={Heller, Jeremiah},
   author={Ormsby, Kyle M.},
   title={Primes and fields in stable motivic homotopy theory},
   journal={Geom. Topol.},
   volume={22},
   date={2018},
   number={4},
   pages={2187--2218},
   issn={1465-3060},
   review={\MR{3784519}},
   doi={10.2140/gt.2018.22.2187},
}

\bib{hill}{article}{
   author={Hill, Michael A.},
   title={Ext and the motivic Steenrod algebra over $\Bbb R$},
   journal={J. Pure Appl. Algebra},
   volume={215},
   date={2011},
   number={5},
   pages={715--727},
   issn={0022-4049},
   review={\MR{2747214}},
   doi={10.1016/j.jpaa.2010.06.017},
}
		
\bib{Thick}{article}{
   author={Hopkins, Michael J.},
   author={Smith, Jeffrey H.},
   title={Nilpotence and stable homotopy theory. II},
   journal={Ann. of Math. (2)},
   volume={148},
   date={1998},
   number={1},
   pages={1--49},
   issn={0003-486X},
   review={\MR{1652975}},
   doi={10.2307/120991},
}

\bib{LS}{article}{
   author={Larson, Richard Gustavus},
   author={Sweedler, Moss Eisenberg},
   title={An associative orthogonal bilinear form for Hopf algebras},
   journal={Amer. J. Math.},
   volume={91},
   date={1969},
   pages={75--94},
   issn={0002-9327},
   review={\MR{240169}},
   doi={10.2307/2373270},
}

\bib{BookMargolis}{book}{
   author={Margolis, H. R.},
   title={Spectra and the Steenrod algebra},
   series={North-Holland Mathematical Library},
   volume={29},
   note={Modules over the Steenrod algebra and the stable homotopy
   category},
   publisher={North-Holland Publishing Co., Amsterdam},
   date={1983},
   pages={xix+489},
   isbn={0-444-86516-0},
   review={\MR{738973}},
}

\bib{Morel}{article}{
   author={Morel, Fabien},
   title={An introduction to $\Bbb A^1$-homotopy theory},
   conference={
      title={Contemporary developments in algebraic $K$-theory},
   },
   book={
      series={ICTP Lect. Notes, XV},
      publisher={Abdus Salam Int. Cent. Theoret. Phys., Trieste},
   },
   date={2004},
   pages={357--441},
   review={\MR{2175638}},
}
	
\bib{Nave}{article}{
   author={Nave, Lee S.},
   title={The Smith-Toda complex $V((p+1)/2)$ does not exist},
   journal={Ann. of Math. (2)},
   volume={171},
   date={2010},
   number={1},
   pages={491--509},
   issn={0003-486X},
   review={\MR{2630045}},
   doi={10.4007/annals.2010.171.491},
}

\bib{Rav}{book}{
   author={Ravenel, Douglas C.},
   title={Nilpotence and periodicity in stable homotopy theory},
   series={Annals of Mathematics Studies},
   volume={128},
   note={Appendix C by Jeff Smith},
   publisher={Princeton University Press, Princeton, NJ},
   date={1992},
   pages={xiv+209},
   isbn={0-691-02572-X},
   review={\MR{1192553}},
}
		
\bib{ricka}{article}{
   author={Ricka, Nicolas},
   title={Subalgebras of the $\mathbb Z/2$-equivariant Steenrod algebra},
   journal={Homology Homotopy Appl.},
   volume={17},
   date={2015},
   number={1},
   pages={281--305},
   issn={1532-0073},
   review={\MR{3350083}},
   doi={10.4310/HHA.2015.v17.n1.a14},
}

\bib{T71}{article}{
   author={Toda, Hirosi},
   title={On spectra realizing exterior parts of the Steenrod algebra},
   journal={Topology},
   volume={10},
   date={1971},
   pages={53--65},
   issn={0040-9383},
   review={\MR{271933}},
   doi={10.1016/0040-9383(71)90017-6},
}

\bib{V}{article}{
   author={Voevodsky, Vladimir},
   title={Reduced power operations in motivic cohomology},
   journal={Publ. Math. Inst. Hautes \'{E}tudes Sci.},
   number={98},
   date={2003},
   pages={1--57},
   issn={0073-8301},
   review={\MR{2031198}},
   doi={10.1007/s10240-003-0009-z},
}

\end{biblist}
\end{bibdiv}

\end{document}